\newtheorem{theorem}{Theorem}[section]
\newtheorem{corollary}[theorem]{Corollary}
\newtheorem{lemma}[theorem]{Lemma}
\newtheorem{proposition}[theorem]{Proposition}
\newtheorem{hypothesis}[theorem]{Hypothesis}
\theoremstyle{definition}
\newtheorem{example}[theorem]{Example}
\newtheorem{problem}[theorem]{Problem}
\numberwithin{equation}{section}
\newcommand{\SL}{\mathrm{SL}}
\newcommand{\PSL}{\mathrm{PSL}}
\newcommand{\PSp}{\mathrm{PSp}}
\newcommand{\PGL}{\mathrm{PGL}}
\newcommand{\PGaL}{\mathrm{P\Gamma L}}
\newcommand{\C}{\mathrm{C}}
\newcommand{\D}{\mathrm{D}}
\newcommand{\Q}{\mathrm{Q}}
\newcommand{\A}{\mathrm{A}}
\renewcommand{\S}{\mathrm{S}}
\newcommand{\HA}{\rm{HA}}
\newcommand{\HS}{\rm{HS}}
\newcommand{\HC}{\rm{HC}}
\newcommand{\AS}{\rm{AS}}
\newcommand{\TW}{\rm{TW}}
\newcommand{\SD}{\rm{SD}}
\renewcommand{\CD}{\rm{CD}}
\newcommand{\PA}{\rm{PA}}
\newcommand{\Sym}{\mathrm{Sym}}
\newcommand{\Aut}{\mathbf{Aut}}
\newcommand{\Nbf}{\mathbf{N}}
\newcommand{\fix}{\mathbf{fix}}
\newcommand{\Cbf}{\mathbf{C}}
\newcommand{\Obf}{\mathbf{O}}
\newcommand{\Soc}{\mathbf{Soc}}
\newcommand{\rbf}{\mathbf{r}}
\newcommand{\sbf}{\mathbf{s}}
\newcommand{\M}{\mathrm{M}}
\newcommand{\Dmc}{\mathcal{D}}
\newcommand{\Bmc}{\mathcal{B}}
\newcommand{\Emc}{\mathcal{E}}
\newcommand{\Pmc}{\mathcal{P}}
\newcommand{\Gmc}{\mathcal{G}}
\renewcommand{\leq}{\leqslant}
\renewcommand{\geq}{\geqslant}
\renewcommand{\emptyset}{\varnothing}
\renewcommand{\mod}[1]{\ (\mathrm{mod}{\ #1})}
\newcommand{\imod}[1]{\allowbreak\mkern4mu({\operator@font mod}\,\,#1)}
\begin{document}
 \title{Symmetries of biplanes }

 \author[S.H. Alavi]{Seyed Hassan Alavi}%
 \thanks{Corresponding author: Seyed Hassan Alavi.}
 \address{Seyed Hassan Alavi, Department of Mathematics, Faculty of Science, Bu-Ali Sina University, Hamedan, Iran.
 }%
 \email{alavi.s.hassan@basu.ac.ir and  alavi.s.hassan@gmail.com (G-mail is preferred)}
 \author[A. Daneshkhah]{Ashraf Daneshkhah}%
 \address{Ashraf Daneshkhah, Department of Mathematics, Faculty of Science, Bu-Ali Sina University, Hamedan, Iran.
 }%
  \email{adanesh@basu.ac.ir}
 \author{Cheryl E. Praeger}
 \address{ %
 Cheryl E. Praeger, Centre for the Mathematics of Symmetry and Computation, Department of Mathematics and Statistics, The University of Western Australia, 35 Stirling Highway, Crawley, 6009 W.A.,
 Australia.}
 \email{Cheryl.Praeger@uwa.edu.au}

 \subjclass[]{05B05; 05B25; 20B25}%
 \keywords{Biplane; cartesian decomposition; primitive permutation group}
 \date{\today}%

\begin{abstract}
In this paper, we first study biplanes $\Dmc$ with parameters $(v,k,2)$, where the block size $k\in\{13,16\}$. These are the smallest parameter values for which a classification is not available. We show that if $k=13$, then either  $\Dmc$ is the  Aschbacher biplane or its dual, or  $\Aut(\Dmc)$ is a subgroup of the  cyclic group of order $3$. In the case where $k=16$, we prove that $|\Aut(\Dmc)|$ divides $2^{7}\cdot 3^{2}\cdot 5\cdot 7\cdot 11\cdot 13$. 
We also provide an example of a biplane with parameters $(16,6,2)$ with a flag-transitive and point-primitive subgroup of automorphisms preserving a homogeneous cartesian decomposition. This motivated us to study biplanes with point-primitive automorphism groups preserving a cartesian decomposition. We prove that such an automorphism group is either of affine type (as in the example), or twisted wreath type.
\end{abstract}

\maketitle

\section{Introduction}\label{sec:intro}

A \emph{symmetric design} $\Dmc=(\Pmc,\Bmc)$ with parameters $(v,k,\lambda)$ is an incidence structure consisting of a set $\Pmc$ of $v$ \emph{points} and a set $\Bmc$ of $v$ \emph{blocks}, each of which is a $k$-subset of $\Pmc$, such that every pair of points lies in exactly $\lambda$ blocks. These conditions imply that every point is incident with exactly $k$ blocks, and every pair of blocks intersects in exactly $\lambda$ points. If  $\lambda=1$, then  $\Dmc$ is called a \emph{projective plane}, and if  $\lambda=2$, then  $\Dmc$ is called a  \emph{biplane}.   The aim of the paper is to study the possible symmetries of biplanes.  Whereas there are several infinite families of projective planes, we currently know of only sixteen nontrivial biplanes, and they all have block size $k\leq 13$, as summarised in Table~\ref{tbl:ex}. The column headed ``\# Examples'' contains the number of biplanes up to isomorphism for the given parameters.  In the column headed ``Reference''  we provide some references which give details of the constructions or classification, and in Section~\ref{sec:examples} we discuss briefly some of their properties. In line 7, we list the smallest set of feasible parameters for which a biplane is currently not known to exist.

\begin{table}[!htbp]
    \scriptsize
    \caption{\small Biplanes with small block size}\label{tbl:ex}
\begin{tabular}{lllcll}
    \hline
    Line  & $v$ & $k$ &  \# Examples & Reference  \\
    \hline
    $1$  & $ 7$ & $4$     & $1$ & \cite{a:Kantor-2-trans} \\
    $2$  & $ 11$ & $5$   & $1$ &  \cite{a:Kantor-2-trans}  \\
    $3$  & $ 16$ & $6$   & $3$ & \cite{a:Assmus-79,a:Burau-63,a:Husain} \\
    $4$  & $ 37$ & $9$   & $4$ & \cite{a:Assmus-77} \\
    $5$  & $ 56$ & $11$  & $5$ & \cite{a:Essert-2005,a:Salwach-79,a:Kaski-08} \\
    $6$  & $ 79$ & $13$  & $\geq 2$ & \cite{a:Aschbacher-71,a:Marangunic-92,a:Kaski-08} \\
    $7$  & $ 121$ & $16$ & Unknown &  \\
    \hline
\end{tabular}
\end{table}
Following the exhaustive computer search described in \cite{a:Kaski-08}, we know that the list in Table~\ref{tbl:ex} is complete for block sizes $k\leq 11$. By the Bruck--Ryser--Chowla theorem,  there exists no biplane with $k = 12$. The authors of  \cite{a:Kaski-08} comment that, unless new ideas emerge to make possible an exhaustive search for biplanes with $k=13$, their classification ``is probably not to be seen in the foreseeable future,'' and they note that such searches are more likely to complete if some nontrivial symmetry is assumed.   Part of our aim in this paper is to restrict the symmetries of possible biplanes with block sizes $13$ or $16$. The only known examples are the Aschbacher biplane with $k=13$ constructed in \cite{a:Aschbacher-71} and its dual.  Moreover, it is known from work of Maranguni\'{c} \cite[Theorem 2]{a:Marangunic-92} (building on previous work referenced there) that the full  automorphism group of any other  biplane with parameters $(79, 11,2)$ must be a $3$-group. We restrict this further and also give restrictions for the groups of biplanes with parameters $(121, 16, 2)$.

\begin{theorem}\label{thm:main-121}
Let $\Dmc$ be a biplane with parameters $(v,k,2)$, where $k\in\{13,16\}$. Then
\begin{enumerate}[\rm (a)]
    \item if $k=13$, then either $\Aut(\Dmc)\leq \C_{3}$, or $\Dmc$ is the  Aschbacher biplane or its dual with $|\Aut(\Dmc)|=110$;
    \item if $k=16$, then $|\Aut(\Dmc)|$ divides $2^{7}\cdot 3^{2}\cdot 5\cdot 7\cdot 11\cdot 13$.
\end{enumerate}
\end{theorem}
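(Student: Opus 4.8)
The plan is to convert the block size into the remaining parameters and then control $\Aut(\Dmc)$ one prime at a time through the fixed substructure of a prime-order element. Since a biplane satisfies $2(v-1)=k(k-1)$, the value $k=13$ forces $v=79$ (a prime) and $k=16$ forces $v=121=11^2$. The workhorse is the classical fact that for $g\in\Aut(\Dmc)$ of prime order $p$ the fixed points and the fixed blocks have the same number $f$ and, when $f$ is not too small, form a (possibly degenerate) symmetric sub-biplane $(v',k',2)$ with $v'\equiv v\pmod p$, $k'\equiv k\pmod p$ and $0\le k'\le k$. Feasibility of $(v',k',2)$, namely $2(v'-1)=k'(k'-1)$ together with the Bruck--Ryser--Chowla condition that $k'-2$ be a square when $v'$ is even, then pins down the admissible pairs $(v',k')$ for each prime $p$.

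First I would bound the primes dividing $|\Aut(\Dmc)|$. If $p>k$ then $k'\equiv k\pmod p$ with $0\le k'\le k$ forces $k'=k$, whence $v'=v$ and $g=1$; so every prime divisor of $|\Aut(\Dmc)|$ is at most $k$. For $k=16$ this leaves $p\in\{2,3,5,7,11,13\}$, and one checks that each actually admits a legal fixed sub-biplane: for instance a $p=13$ element must fix a $(4,3,2)$ structure, since $f\equiv 4\pmod{13}$ and $k'\equiv 3$, whereas every $p\ge 17$ is excluded exactly as above. This is precisely why these six primes, and no others, appear in the stated divisor.

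Next, for part (b), I would bound each Sylow subgroup $P_p$ by combining the fixed-point values with the Burnside orbit count $\tfrac{1}{|P_p|}\sum_{g\in P_p}\fix(g)\in\Zbb$ on the $121$ points. For $p=11$ an order-$11$ element fixes an $(11,5,2)$ sub-biplane, so $f=11$; no element of order $121$ can occur, since such an element would be a single $121$-cycle and its $11$-th power would then be fixed-point-free, contradicting $f=11$, while an elementary abelian $P_{11}$ of order $121$ makes the orbit count $(121+120\cdot 11)/121$ non-integral. As the only groups of order $121$ are $C_{121}$ and $C_{11}^2$, this gives $|P_{11}|=11$. Analogous fixed-point-plus-counting arguments yield $|P_{13}|=13$, $|P_7|=7$, $|P_5|=5$ and cap $|P_3|\le 9$ and $|P_2|\le 2^7$; multiplying out gives the divisor $2^{7}\cdot 3^{2}\cdot 5\cdot 7\cdot 11\cdot 13$.

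For part (a) I would first isolate the exceptional biplane: because $79\nmid 110$, the Aschbacher biplane and its dual are point-intransitive, and any $(79,13,2)$ biplane admitting an element of order $79$ would be cyclic/affine of prime degree, which I would eliminate using the classification of transitive groups of prime degree together with Kantor's list of $2$-transitive symmetric designs. Away from this case I would invoke Maranguni\'{c}'s theorem to reduce $\Aut(\Dmc)$ to a $3$-group and then kill order-$9$ elements and $\C_3\times\C_3$ subgroups by analysing the induced action on the fixed Fano sub-biplanes $(7,4,2)$, leaving $|\Aut(\Dmc)|\le 3$. The main obstacle is exactly this last elimination: the local fixed-point test by itself permits order-$9$ elements and elementary abelian $3$-subgroups, so ruling them out requires finer global combinatorics describing how the Fano sub-biplanes and the length-$3$ point-orbits interlock on the $79$ points; likewise, the sharp determination $|P_2|\le 2^7$ in part (b) is where the purely arithmetic constraints must be supplemented by structural or \GAP-assisted analysis.
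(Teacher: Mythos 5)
Your overall strategy (prime-by-prime fixed-point analysis) is indeed the paper's strategy, but the proposal contains two false premises and leaves the hardest steps unproved, so it has genuine gaps. The central problem is that your fixed-substructure test uses only congruences and abstract feasibility of $(v',k',2)$, omitting the \emph{embedding} constraint of Kantor's lemma (the paper's Lemma~\ref{lem:subdesign}): a proper symmetric subdesign must satisfy either $(k'-1)^{2}=k-\lambda$ or $k'(k'-1)\leq k-\lambda$. For $k=13$ this matters decisively: since $k-2=11$ is not a square and $k'(k'-1)\leq 11$ forces $k'=1$, a nontrivial $3$-element fixes exactly \emph{one} point and one block --- there are no fixed Fano sub-biplanes $(7,4,2)$, because $k'(k'-1)=12>11$. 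Your declared ``main obstacle'' (killing order-$9$ elements and $\C_{3}\times\C_{3}$) is an artifact of this wrong premise and dissolves once the correct count $f=1$ is in hand: as in Theorem~\ref{thm:79}, the $3$-group $G$ fixes a point $\alpha$, has an orbit $\Delta$ of length $3$ since $78\equiv 6\pmod 9$, and the stabiliser $G_{\beta}$ of $\beta\in\Delta$ fixes two points, hence is trivial, giving $|G|=3|G_\beta|=3$.

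The same omission breaks your treatment of $p=11$ in part (b). An element of order $11$ does \emph{not} fix an $(11,5,2)$ sub-biplane; it is fixed-point-free. Indeed $k'(k'-1)=20>14=k-2$ rules that subdesign out, and the paper's Proposition~\ref{cor:p2} proves $\fix_{\Pmc}(x)=\emptyset$ via the disjointness count of Lemma~\ref{lem:fix-v}: a fixed point would force $121=v\geq f(k-\sbf_{B}+1)\geq 11\cdot 12$. This invalidates your Burnside computation against $\C_{11}^{2}$: with the correct value $f=0$ the orbit count is $(121+120\cdot 0)/121=1$, perfectly integral, so counting cannot exclude a \emph{regular} action of $\C_{11}^{2}$. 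The paper excludes it by a different and essential ingredient you are missing: a regular (hence transitive) group would make $\Dmc$ the development of a $(121,16,2)$-difference set, which is impossible by Lander's theorem (Mann's test, Lemma~\ref{lem:mann}) applied with $2^{5}\equiv -1\pmod{11}$, since $2$ divides the square-free part of $k-\lambda=14$ (Corollary~\ref{cor:121-t}). Finally, the bounds $|P_{2}|\leq 2^{7}$ and $|P_{3}|\leq 3^{2}$, which you assert and defer to ``structural or GAP-assisted analysis,'' are where the bulk of the paper's work lies (the cycle-type classifications in Lemmas~\ref{lem:2-1}, \ref{lem:2-2}, \ref{lem:2-3}, followed by faithful-action-on-a-block and orbit-stabiliser arguments in Proposition~\ref{prop:2bound} and Lemma~\ref{lem:121-s3}); as written, your proposal does not establish them.
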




For our other more general analysis we were motivated by Kantor's  fundamental result \cite[Theorem B]{a:kantor87} which restricted the types of point-primitive automorphism groups of projective planes (and which had such strong influence on future work in that area \cite{a:Camina-2005,a:Gill-2007,a:Gill-2016,a:kantor87}). We hoped that something similar would be available for biplanes.
We observed that some of the known biplanes admit a point-primitive  automorphism group, and  it is possible for such groups to preserve a cartesian decomposition of the point set (as defined in Section~\ref{sec:defn}).
We examined the possible types of primitive permutation groups (as given in Table~\ref{tbl:prim}): for six of the eight types there are some primitive groups of that type which preserve a cartesian decomposition (the exceptions being types $\HS$ and $\SD$).  We showed that groups of only two of these types may act as point-primitive groups on biplanes.

\begin{theorem}\label{thm:main}
Let $\Dmc$ be a biplane admitting a point-primitive automorphism group $G$. If $G$ preserves a cartesian decomposition of the point-set, then $G$ is either of affine or twisted wreath type.
\end{theorem}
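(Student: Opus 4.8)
The plan is to reduce to the O'Nan--Scott type of $G$ and then to show that, among the types compatible with a cartesian decomposition, only those with a \emph{regular} socle survive the biplane condition. First I would invoke the structural input recalled just before the theorem: since $G$ is point-primitive and preserves a cartesian decomposition $\mathcal{E}=\{\Gamma_1,\dots,\Gamma_m\}$ of $\Pmc$ with $m\ge 2$, its type lies in $\{\HA,\HC,\AS,\TW,\CD,\PA\}$, the types $\HS$ and $\SD$ being already excluded. The pivot of the argument is the observation that $\HA$ and $\TW$ are \emph{exactly} the types in this list whose socle $N=\soc(G)$ acts regularly on $\Pmc$ (elementary abelian and regular for $\HA$, a regular copy of $T^m$ with $T$ nonabelian simple for $\TW$); in each of $\HC$, $\AS$, $\CD$, $\PA$ the socle is non-regular, i.e. $N_\alpha\neq 1$. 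Hence it suffices to prove that a point-primitive biplane group preserving a cartesian decomposition cannot have $N_\alpha\neq 1$, which rules out these four types and leaves precisely $\HA$ and $\TW$.

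Second, I would use the grid coordinates supplied by $\mathcal{E}$. The primitivity of $G$ forces it to permute the factors $\Gamma_i$ transitively, so $\mathcal{E}$ is homogeneous, $v=n^m$ with $n=|\Gamma_i|\ge 5$, and $G\le \Sym(\Gamma_1)\wr S_m$ in product action. Writing each point as the $m$-tuple of parts containing it, the Hamming distance $d(\alpha,\beta)$ (the number of coordinates in which $\alpha$ and $\beta$ differ) is $G$-invariant, so every distance class is a union of $G$-orbitals, and in the four non-regular types the subdegrees of $G$ are governed by the component action of $N$ on a single factor. I would then combine this with the defining relation of a biplane, $2(v-1)=k(k-1)$, and with the elementary pairwise counts attached to a fibre $\gamma\in\Gamma_i$ of size $n^{m-1}$, namely $\sum_{B}|B\cap\gamma|=k\,n^{m-1}$ and $\sum_{B}\binom{|B\cap\gamma|}{2}=2\binom{n^{m-1}}{2}$, together with the averaged block--suborbit identity $\sum_{B\ni\alpha}|B\cap\Delta_i|=2\,|\Delta_i|$ for each suborbit $\Delta_i$.

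The heart of the proof is to show these constraints are incompatible with $N_\alpha\neq 1$, and I expect to argue type by type. For the genuine product types $\PA$ and $\CD$ the subdegrees are products of the (known, and all exceeding $1$) component subdegrees, and the presence of the short ``distance-one'' suborbits together with $\lambda=2$ and the factorisation $v=n^m$ should force a divisibility or size contradiction between $k$, $n$ and $m$. For $\HC$ the socle is $T^{2m}$, so $|G|$ is divisible by $|T|^{2m}$; weighing this against $2(v-1)=k(k-1)$ with $v=n^m$ should eliminate it on arithmetic grounds. The case $\AS$ is handled separately: the almost simple primitive groups preserving a cartesian decomposition form a short explicit list, and for each I would exclude a biplane directly, using the feasible $(v,k)$ coming from $2(v-1)=k(k-1)$ and the fixed-point bound for automorphisms of symmetric designs.

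I anticipate that the main obstacle is the uniform combinatorial step for $\PA$ and $\CD$: converting the mere non-regularity of the socle into a concrete clash with $\lambda=2$ requires tight control of the product-action subdegrees and of how a single block of the biplane can meet the fibres $\gamma\in\Gamma_i$, and the averaged (rather than flag-transitive) intersection identities are comparatively weak. A secondary difficulty is that the $\AS$ analysis relies on an external classification of almost simple groups admitting a cartesian decomposition, whose completeness must be checked over the parameter ranges that survive the arithmetic.
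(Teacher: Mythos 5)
Your overall architecture is the same as the paper's: exclude $\HS$ and $\SD$ by the classification of primitive groups preserving cartesian decompositions, treat $\AS$ via the explicit Praeger--Schneider list, and eliminate $\HC$, $\CD$, $\PA$ by exploiting the non-regularity of their socles. Your pivot observation (that among the six admissible types exactly $\HA$ and $\TW$ have regular socle) is correct, though what the paper actually needs is finer: for types $\PA$, $\HC$, $\CD$ the decomposition can be chosen so that $L=G\cap(H\times 1^{d-1})$ contains $\Soc(H)\times 1^{d-1}$ with $\Soc(H)$ non-regular \emph{and non-Frobenius} on $\Gamma$ (Lemma~\ref{lem:socles}); that extra condition is essential later. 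The genuine gaps are in the two load-bearing steps, and in both cases the missing content is substantial. For $\PA$, $\HC$, $\CD$ you say that Hamming-distance suborbits, $2(v-1)=k(k-1)$, and averaged block-fibre counts ``should force'' a contradiction; they do not, and you concede as much. The paper's route requires: the fixed-point bound of Corollary~\ref{cor:fix-bi-bound} applied to a non-semiregular element of $L$ to force $d=2$ (Lemma~\ref{lem:main}); the same bound to show triple-point stabilisers in $\sigma(L)$ are trivial (Lemma~\ref{lem:threepoints}(c)); Wong's classification of primitive groups with a suborbit of length $3$ to conclude that every nontrivial suborbit of the component socle has length at least $5$ (Proposition~\ref{prop:stab-three}); and finally a delicate count of coordinate-repeating pairs inside a single block, organised by orbits of $(T_1)_\alpha\times(T_2)_\beta$, yielding $4k\leq 4(c-1)$ against $k=\lceil\sqrt{2}\,c\rceil>c$. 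Your averaged identities (which hold only on average, absent flag-transitivity) cannot replace this group-theoretic input. Moreover, your proposed treatment of $\HC$ ``on arithmetic grounds'' is a dead end: divisibility of $|G|$ by $|T|^{2m}$ conflicts with nothing, since the biplane axioms impose no bound on $|G|$ of the required shape (already the $(7,4,2)$ biplane has $|\Aut(\Dmc)|=168>v^2$); the paper handles $\HC$ by exactly the same combinatorial machinery as $\PA$ and $\CD$.

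The $\AS$ case is also not ``direct''. The external list gives $d=2$ and $(T,v)\in\{(\A_6,36),\,(\M_{12},144)\}$ or $T=\PSp_4(q)$ with $q\geq 4$ even and $v=c^2$, $c=q^2(q^2-1)/2$. The two sporadic cases die by checking $k(k-1)=2(v-1)$, but for the infinite family the feasibility condition becomes $8c^2-7=e^2$, a Pell-type equation with \emph{infinitely many} integer solutions, so no fixed-point bound or feasible-parameter check can dispose of it. The paper needs the complete solution of $8x^2-y^2=7$ (Lemma~\ref{lem:dio}) together with a congruence argument: every solution satisfies $c\equiv\pm1\pmod{3}$, whereas $3$ divides $q^2-1$ and hence divides $c$. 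Without this (or some equivalent number-theoretic step), your plan leaves the family $\PSp_4(q)$ untouched, and without Wong's theorem and the block-counting argument it leaves $\PA$, $\HC$, $\CD$ untouched; so the proposal as it stands establishes neither of the two propositions (Theorem~\ref{thm:main-pa} and Proposition~\ref{prop:AS}) from which the theorem follows.
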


Only one of  the known biplanes admits a point-primitive group which preserves a cartesian decomposition. It is one of the biplanes with parameters $(16,6,2)$, and the group is of affine type (see Subsection~\ref{ex:S4wrS2}). Each primitive group of twisted wreath type does indeed preserve a cartesian decomposition, but unfortunately we have not been able to eliminate these groups as automorphism groups of biplanes, even though there are no known examples.

\begin{problem}\label{ques:TW}
Show that no biplane  admits a point-primitive automorphism group of twisted wreath type.
\end{problem}


This paper is organised as follows. In Section \ref{sec:examples}, we provide detailed information about the symmetry properties of the known biplanes. In particular, in Example~\ref{ex:S4wrS2}, we give an example of biplane with parameters $(16,6,2)$ admitting a point-primitive automorphism group preserving a homogeneous cartesian decomposition. In Section \ref{sec:fix} we summarise existing results and develop new ones concerning fixed points of automorphisms of biplanes. These are used for studying automorphisms  of biplanes with parameters $(79,13,2)$ in Section~\ref{sec:79}, and those  with parameters $(121,16,2)$ in Section~\ref{sec:121}, in particular  proving Theorem \ref{thm:main-121}.
We study transitive biplanes with point-primitive automorphism groups in product action in Section~\ref{sec:ProdAction}, and in particular, we show in Theorem \ref{thm:main-pa}  that no transitive biplane has a primitive automorphism group of type \PA \ (as defined in Table~\ref{tbl:prim}). As mentioned above, the example in Subsection~\ref{ex:S4wrS2} motivated us to investigate biplanes with an automorphism group preserving a homogenous cartesian decomposition 
of the point set $\Pmc = \Gamma^d$ with $|\Gamma|\geq 5$. Such groups can be embedded into the primitive wreath product $\Sym(\Gamma)\wr \S_{d}$ in product action \cite[Theorem~5.13]{b:Praeger-18}, and using  Praeger's result \cite{a:prim-incl} on  the overgroups of finite primitive permutation groups, we analyse the possible cases to prove Theorem~\ref{thm:main}.

\begin{table}
    \scriptsize
    \caption{\small Types of primitive groups $G$ on a set $\Omega$. Here $T$ is a nonabelian simple group and $\alpha\in \Omega$}\label{tbl:prim}
\begin{tabular}{llp{9cm}}
    \hline
    Name & O'Nan-Scott type & Description\\
    \hline
    \HA &
    Affine &
    The unique minimal normal subgroup is elementary abelian and acts regularly on $\Omega$\\
    \HS &
    Holomorph simple &
    Two minimal normal subgroups: each nonabelian simple and regular\\
    \HC &
    Holomorph compound  &
    Two minimal normal subgroups: each isomorphic to $T^{k}$, $k\geq 2$ and regular\\
    \AS &
    Almost simple &
    The unique minimal normal subgroup is nonabelian simple\\
    \TW &
    Twisted wreath &
    The unique minimal normal subgroup is regular and isomorphic to $T^{k}$ with $k\geq 6$\\
    \SD &
    Simple diagonal &
    The unique minimal normal subgroup $N$ is isomorphic to $T^{k}$, with $k\geq 2$ and $N_{\alpha}$ is a full diagonal subgroup of $N$. The group $G$ acts primitively on the set of $k$ simple direct factors of $N$\\
    \CD &
    Compound diagonal &
    The unique minimal normal subgroup $N$ is isomorphic to $T^{k}$ and, for some $\ell\mid k$ with $k\geq 2\ell\geq 4$,
    $N_{\alpha}\cong T^{\ell}$ is a direct product of `strips', each a diagonal subgroup of $T^{k/\ell}$. The support sets of the strips
    form a system of imprimitivity for the $G$-action on the set of $k$ simple direct factors of $N$\\
    \PA &
    Product action &
    The unique minimal normal subgroup $N$ is isomorphic to $T^{k}$,with $k\geq 2$ and $N_{\alpha}\cong R^{k}$ for some proper nontrivial subgroup $R$ of $T$. \\
    \hline
\end{tabular}
\end{table}

\section{Definitions and notation}\label{sec:defn}

All groups and incidence structures in this paper are finite. We here adopt the standard notation as in \cite{b:Atlas} for finite groups of Lie type, for example, we use $\SL_{n}(q)$ and $\PSL_{n}(q)$ to denote special linear groups and projective spacial linear groups, respectively. We write $\C_{n}$ for the cyclic group of order $n$. 
Symmetric and alternating groups on $\Omega$ are denoted by $\S_{n}$ and $\A_{n}$, respectively. 

Let $\Omega=\{1,\ldots,n\}$, and let $G$ be a group acting on $\Omega$ with $\alpha\in \Omega$. 
We denote the \emph{$G$-orbit} containing $\alpha$ by $\alpha^{G}$ and denote the \emph{point-stabiliser} of $\alpha$ by $G_{\alpha}$. A group $G$ is \emph{transitive} on $\Omega$ if it has only one orbit, and it acts \emph{semiregularly} on $\Omega$ if $G_{\alpha}=1$, for all $\alpha \in \Omega$.  If $G$ is both transitive and semiregular, then it is called \emph{regular}. 
If the identity element of $G$ is the only element of $G$ which fixes every point, then the action of $G$ on $\Omega$ is called \emph{faithful} and $G$ may be viewed as a subgroup of $\S_n$; in this case, $G$ is called a \emph{permutation group} on $\Omega$.   
A \emph{Frobenius group} is a transitive permutation group which is not regular and the identity element is the only element of $G$ fixing  more than one point. 
If $\Delta$ is a nonempty subset of $\Omega$, we denote by $\fix_{\Delta}(x)$ the set of all points in $\Delta$ fixed by $x$. We also write $G_{\Delta}$ for the  setwise-stabiliser of $\Delta$. The group $G^{\Delta}$ denotes the \emph{induced permutation group} of $G$ on $\Delta$. A transitive group $G$ on $\Omega$ is said to be \emph{imprimitive} if there is a nontrivial $G$-invariant partition of $\Omega$, and otherwise is said to be \emph{primitive}. Table~\ref{tbl:prim} describes the various  types of primitive permutation groups occurring in the O'Nan-Scott Theorem \cite{a:LPS-onan-88} giving also the notation from \cite{a:prim-incl}.

Let $\Dmc=(\Pmc,\Bmc)$ be a symmetric design  with parameters $(v,k,\lambda)$.  For $\alpha\in\Pmc$ and $B\in\Bmc$, we denote by $[\alpha]$ the set of blocks containing $\alpha$, and  by $[B]$ the  set of points incident with $B$.  A \emph{flag} of $\Dmc$ is an incident point-block pair.
From the definition of $\Dmc$ in the introduction, it is clear that $(\Bmc,\Pmc)$ is also a symmetric design  with parameters $(v,k,\lambda)$
(where we identify each $\alpha \in\Pmc$ with the subset $[\alpha]$ of $\Bmc$); the design $(\Bmc,\Pmc)$ is called the \emph{dual design} of $\Dmc$. Recall that a \emph{biplane} is a symmetric design with $\lambda=2$.
According to our definition (where blocks are defined as $k$-subsets of points) there are no biplanes with $k=2$ since each pair of distinct points would be equal to the unique block containing the two points. Also,  if $k=3$, then $v=4$ and $\Dmc$ is the complete design with block-set the set of all $3$-subsets of points. Therefore, we will assume that $k\geq 4$ and in this case $v\geq 7$ (which follows from Lemma~\ref{lem:basic}).

An automorphism of $\Dmc=(\Pmc,\Bmc)$ is a permutation of the point-set $\Pmc$  which leaves invariant the block-set $\Bmc$.
 A subgroup $G$ of the full automorphism group acts naturally on the sets of points, blocks and flags, and the notion of transitivity and primitivity of these actions is defined using the same terminology as for permutation groups.  We remark here that the conditions of  point-transitivity and block-transitivity are equivalent for symmetric designs \cite{a:Block-67} and in particular for biplanes, and therefore, in this paper, we refer to a design with these properties simply as a \emph{transitive} design. For an automorphism $x$ of $\Dmc$, we denote the sets of fixed points and fixed blocks of $x$ by $\fix_{\Pmc}(x)$ and $\fix_{\Bmc}(x)$, respectively. Also for a subset $X\subset\Pmc$, we often write $\fix_{X}(x)=\fix_{\Pmc}(x)\cap X$ and similarly for subsets of $\Bmc$. Note that a symmetric design and its dual design have isomorphic automorphism groups.

A \emph{cartesian decomposition}  of the set $\Pmc$ is a finite set $\Emc=\Gamma_{1}$, \ldots, $\Gamma_{d}$ of partitions of $\Pmc$ such that $|\Gamma_{i}|\geq 2$ for each $i$ and
$|\gamma_{1}\cap \cdots \cap \gamma_{d}|=1$ for  each choice of  $\gamma_{1}\in \Gamma_{1},\dots,\gamma_{d}\in \Gamma_{d}$.
A cartesian decomposition is said to be \emph{trivial} if it contains only one partition, (in this case it must be the partition into singletons), and it is called \emph{homogeneous} if all the $\Gamma_{i}$ have the same cardinality.

We use the computer software system \textsf{GAP} \cite{GAP4} for computational arguments. For further notation and definitions on designs and permutation groups see \cite{b:Atlas,b:Dixon,b:Hugh-design,b:lander,b:Praeger-18}.

\section{Symmetries of the known biplanes}\label{sec:examples}

Here we give a brief summary of the known examples of biplanes. These all have  block size $k\leq 13$.  The parameters for these biplanes are given in Table \ref{tbl:ex} along with references where more details can be found about them.. \smallskip

\noindent \textbf{Line 1:} The unique biplane with parameters $(7,4,2)$  is the complement of the Fano plane (the unique projective plane  with parameters $(7,3,1)$). Its automorphism group is $\PGL_3(2)\cong \PSL_{2}(7)$ and is flag-transitive and point-primitive. \smallskip

\noindent \textbf{Line 2:} The unique biplane with parameters $(11,5,2)$ is the Hadamard design,  see \cite{a:Kantor-2-trans}. Its automorphism group is $\PSL_{2}(11)$, and it is $2$-transitive and hence primitive on points, and also flag-transitive.\smallskip

\noindent \textbf{Line 3:} In the case of  biplanes with parameters $(16,6,2)$, Hussain \cite{a:Husain} proved that there are exactly three examples. This classification was obtained independently by Burau \cite{a:Burau-63} who determined all three automorphism groups (see \cite[p. 263]{a:Assmus-79}). O'Reilly Regueiro \cite[p. 139]{a:reg-reduction} described these three biplanes and their symmetry properties.  All three are transitive.  One of them has a point-primitive and flag-transitive full automorphism group $2^{4}{:}\S_{6}$, and we have constructed a point-primitive and flag-transitive subgroup which  preserves a homogeneous cartesian decomposition of the  points set, see Subsection~\ref{ex:S4wrS2}.
The other two of these biplanes have point-imprimitive (full) automorphism groups. One of them arises from a difference set in $\C_{2} \times \C_{8}$. Its  full automorphism group is flag-transitive with a point-stabiliser of order $48$ of the form $2{\cdot}\S_4$ (the full group of symmetries of the cube).  The third biplane arises from a difference set in $\Q_{8} \times \C_{2}$. Its full automorphism group has order $16\cdot  24$, and is not flag-transitive. \smallskip

\noindent \textbf{Line 4:}
We know from \cite{a:Salwach-78} that there are exactly  four biplanes with parameters $(37,9,2)$, see also \cite{a:Assmus-77}.
Only one of these biplanes is transitive. It is  constructed from the difference set of nine quadratic residues modulo $37$, and is flag-transitive.
Two of the remaining three biplanes are duals of each other and were found by Hussain \cite{a:Hussain-45}. Their  full automorphism group is $\PGaL_{2}(8)$ and is intransitive. The last of these biplanes has an automorphism group of order $54$, which fixes a unique point. \smallskip

\noindent \textbf{Line 5:}
We know from \cite{a:Kaski-08} that there are exactly  five biplanes with parameters $(56,11,2)$. Discussions of them may be found in   \cite{a:Assmus-79,a:Denniston-80,a:Essert-2005,a:Hall-70,a:Key-97,a:Salwach-79}. Only one of these biplanes  is transitive. It was
found by Hall et al. \cite{a:Hall-70} in the study of rank 3 permutation groups, and has  full automorphism group
$\PSL_{3}(4){:}\C_{2}^{2}$ (a subgroup of index $3$ of $\Aut(\PSL_{3}(4))$).  This group is  point-primitive but not flag-transitive with a
point-stabiliser $\A_{6}{\cdot} \C_{2}^2$.  
\smallskip

\noindent \textbf{Line 6:}
In the case of  biplanes with parameters $(79,13,2)$, there are two known examples, namely the  Aschbacher biplane and its dual \cite{a:Aschbacher-71}.  The full automorphism group of the Aschbacher biplane and its dual is
\[
\langle x,y,z \mid x^{11}=y^5=z^2=1, x^y=x^4, x^z=x^{-1}, yz=zy\rangle
\]
of order $110$ and is intransitive, see \cite[Theorem 6.9]{a:Aschbacher-71}. Further, it is known from work of Maranguni\'{c} \cite[Theorem 2]{a:Marangunic-92} (building on previous work referenced there) that the full  automorphism group of any other  biplane with these parameters must be a $3$-group. In fact the group must have order $1$ or $3$ by Theorem~\ref{thm:main-121}.
 \smallskip

\noindent \textbf{Line 7:}
There are no known biplanes with parameters $(121,16,2)$.  However, if such a biplane exists, then  its full automorphism group has order dividing  $2^{7}\cdot 3^{2}\cdot 5\cdot 7\cdot 11\cdot 13$, by  Theorem~\ref{thm:main-121}, and is not transitive by Corollary~\ref{cor:p2}.

\section{Fixed points of biplane automorphisms}\label{sec:fix}

In this section, we collect information about the fixed points of automorphisms of biplanes. First we summarise some results about their parameters.

\begin{lemma}\label{lem:basic}
Let $\Dmc = (\Pmc,\Bmc)$ be a biplane with parameters $(v,k,2)$ and $k\geq4$. 
Then
\begin{align}\label{eq:basic}
    k(k-1)=2(v-1).
\end{align}
If $v=c^d$, for  positive integers $c,d$ with $d\geq2$, then
\begin{align}\label{eq:kbound}
    \sqrt{2}\,c^\frac{d}{2}<k<\sqrt{2}\,c^\frac{d}{2}+1,
\end{align}
and so $k=\lceil \sqrt{2}c^\frac{d}{2} \rceil$ and if $d$ is odd, then $c\neq 2^a$ with $a$ odd. In particular, if $d=2$, then $k=\lceil \sqrt{2}c \rceil$ with $c\geq 4$.
\end{lemma}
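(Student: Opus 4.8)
The plan is to prove Lemma~\ref{lem:basic} in two parts, corresponding to the two displayed assertions.

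\medskip

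\noindent\textbf{Setting up equation~\eqref{eq:basic}.}
First I would recall the standard counting identities for a symmetric $2$-design. Fix a point $\alpha\in\Pmc$. Every other point $\beta\neq\alpha$ lies together with $\alpha$ in exactly $\lambda=2$ blocks, so counting incident pairs $(\beta,B)$ with $\beta\neq\alpha$, $\alpha,\beta\in[B]$, in two ways gives $r(k-1)=\lambda(v-1)$, where $r$ is the number of blocks through $\alpha$. For a symmetric design the number of blocks equals the number of points, and a standard double-count ($vr=bk$ with $b=v$) yields $r=k$. Substituting $r=k$ and $\lambda=2$ gives immediately
\[
k(k-1)=2(v-1),
\]
which is \eqref{eq:basic}. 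This part is routine and poses no obstacle.

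\medskip

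\noindent\textbf{Deriving the bounds~\eqref{eq:kbound}.}
Next, assuming $v=c^d$ with $d\geq2$, I would rearrange \eqref{eq:basic} as $k^2-k-2(v-1)=0$ and solve the quadratic for the positive root, obtaining $k=\tfrac12\bigl(1+\sqrt{1+8(v-1)}\bigr)=\tfrac12\bigl(1+\sqrt{8v-7}\bigr)$. To get \eqref{eq:kbound} I would show $\sqrt2\,c^{d/2}<k<\sqrt2\,c^{d/2}+1$, i.e.\ $\sqrt{2v}<k<\sqrt{2v}+1$. Both inequalities reduce to elementary estimates on $\sqrt{8v-7}$: for the lower bound one checks $1+\sqrt{8v-7}>2\sqrt{2v}$, equivalently $\sqrt{8v-7}>2\sqrt{2v}-1$; squaring (valid since both sides are positive for $v\geq1$) reduces this to $8v-7>8v-4\sqrt{2v}+1$, i.e.\ $4\sqrt{2v}>8$, which holds whenever $v>2$, and indeed $v\geq7$ here. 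The upper bound $k<\sqrt{2v}+1$ is similar. Since \eqref{eq:kbound} pins $k$ between two consecutive-length real numbers and $k$ is an integer, it follows that $k=\lceil\sqrt2\,c^{d/2}\rceil$. The specialisation to $d=2$ giving $k=\lceil\sqrt2\,c\rceil$ is then immediate, and $c\geq4$ follows because $k\geq4$ forces $v=c^2\geq7$, hence $c\geq3$; one then rules out $c=3$ (giving $v=9$, $k=\lceil 3\sqrt2\rceil=5$) by checking it fails \eqref{eq:basic}, so $c\geq4$.

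\medskip

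\noindent\textbf{The parity obstruction.}
The part I expect to require the most care is the claim that if $d$ is odd then $c\neq 2^a$ with $a$ odd. The natural approach is to argue that $v=c^d$ cannot be $2^{ad}$ with $ad$ odd, i.e.\ that $v$ cannot be an odd power of $2$. Here I would invoke \eqref{eq:basic}: we have $8v-7=(2k-1)^2$, so $8v-7$ must be a perfect square, and moreover an odd one. If $v=2^m$ with $m$ odd, then $8v-7=2^{m+3}-7$ with $m+3$ even; I would examine this value modulo a suitable small modulus (for instance modulo $16$, or via quadratic-residue considerations modulo an odd prime) to derive a contradiction with its being a perfect square. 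The delicate point is choosing the right modulus so that $2^{m+3}-7$ is forced into a nonresidue class precisely when $m$ is odd; I expect a congruence argument modulo a small prime such as $3$, $5$ or $7$, combined with tracking the parity of the exponent, to settle it, and this congruence bookkeeping is the main obstacle in an otherwise elementary proof.
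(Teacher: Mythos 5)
Your treatment of \eqref{eq:basic} and of the bounds \eqref{eq:kbound} is correct and essentially matches the paper: the paper simply cites Lander for \eqref{eq:basic}, and derives \eqref{eq:kbound} by bounding $k(k-1)=2c^d-2$ directly rather than solving the quadratic, but the two computations are interchangeable. Your deduction that $k=\lceil\sqrt2\,c^{d/2}\rceil$ (existence of an integer strictly between $x$ and $x+1$ forces $x\notin\Zbb$) and your handling of $c\geq4$ when $d=2$ are also fine.

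The genuine gap is in the parity claim, and it is the one place where your own tools already suffice. Your plan is to show that $2^{m+3}-7$ (for $m$ odd, $m\geq3$) is not a perfect square by finding a small modulus in which it lands in a non-residue class ``precisely when $m$ is odd.'' No such modulus exists. The obstruction is the case $m=1$: there $2^{4}-7=9=3^2$ is an honest square. Modulo $16$ one has $2^{m+3}-7\equiv 9$ for all $m\geq1$, a square; and modulo any odd $N$, the value of $2^{m+3}-7$ depends only on $m \bmod \mathrm{ord}_N(2)$, so $m=1+2\,\mathrm{ord}_N(2)$ is odd, at least $3$, and gives the same residue as $m=1$, namely $9$ --- a square mod every $N$. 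So any congruence argument that excluded all odd $m\geq3$ would also exclude $m=1$, which it cannot. (Indeed $x^2+7=2^n$ is the Ramanujan--Nagell equation, whose resolution genuinely needs more than congruences.) The repair is immediate from what you have already proved: if $d$ is odd and $c=2^a$ with $a$ odd, then $2v=2^{ad+1}$ with $ad+1$ even, so $s:=\sqrt{2v}=\sqrt2\,c^{d/2}=2^{(ad+1)/2}$ is an integer, and your strict inequalities \eqref{eq:kbound} read $s<k<s+1$, which no integer $k$ satisfies --- contradiction. This one-line argument is exactly the paper's proof. Alternatively, since $m+3$ is even, $2^{m+3}-7=x^2$ is a difference of squares, and $\bigl(2^{(m+3)/2}-x\bigr)\bigl(2^{(m+3)/2}+x\bigr)=7$ forces $2^{(m+3)/2}=4$, i.e.\ $v=2$, excluded by $v\geq7$.
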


\begin{proof}
The equation \eqref{eq:basic} can be found in \cite[Proposition 1.1]{b:lander}.
Suppose now that $v=c^d$ with $d\geq2$. Then  \eqref{eq:basic} implies that $(k-1)^{2}<k(k-1)=2c^{d}-2$, and so $k<\sqrt{2c^{d}-2}+1<\sqrt{2}c^\frac{d}{2}+1$. Also, by \eqref{eq:basic}, $2v=k^2-k+2 < k^2$ (since $k\geq4$), so $k>\sqrt{2} c$, and \eqref{eq:kbound} is proved.
If $d$ is odd and $c=2^a$ with $a$ odd, then  $s:=2^{a(d+1)/2}$ is a positive integer and the inequality \eqref{eq:kbound} is $s< k <s+1$, which has no integer solution $k$. Thus these conditions do not hold, and so  $\sqrt{2}c^\frac{d}{2}$ is an irrational number, and   $k=\lceil \sqrt{2}c^\frac{d}{2} \rceil$. In particular, if $d=2$, then $k=\lceil \sqrt{2}c \rceil$; in this case, if $c=2$ or $3$, then $2(v-1)=2(c^d-1)=6$ or $16$, and $k=3$ or $5$, respectively, and this contradicts either the assumption $k\geq4$ or the condition  \eqref{eq:basic}. Thus $c\geq 4$.
\end{proof}


\begin{lemma}\cite[Theorem 3.1 and Corollary 3.2]{b:lander}\label{lem:fix}
An automorphism $x$ of a symmetric
design fixes an equal number of points and blocks, and moreover, $x$ has the same cycle structure in its actions on points and on blocks.
\end{lemma}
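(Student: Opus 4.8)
The plan is to encode the design and the automorphism by matrices over $\mathbb{Q}$ and to exploit the nonsingularity of the incidence matrix of a symmetric design. Let $\Dmc=(\Pmc,\Bmc)$ be a symmetric design with parameters $(v,k,\lambda)$, and let $N$ be its $v\times v$ incidence matrix, with rows indexed by $\Pmc$ and columns by $\Bmc$, so that the $(\alpha,B)$-entry equals $1$ if $\alpha\in[B]$ and $0$ otherwise.

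First I would record the standard identity $NN^{T}=(k-\lambda)I+\lambda J$, where $I$ is the identity matrix and $J$ the all-ones matrix; this encodes that every block has $k$ points and every pair of distinct blocks meets in $\lambda$ points. Using the parameter relation $k(k-1)=\lambda(v-1)$, one computes the eigenvalues of the symmetric matrix $NN^{T}$ to be $k^{2}$ (once, on the all-ones vector) and $k-\lambda$ (with multiplicity $v-1$), whence $\det(N)^{2}=\det(NN^{T})=k^{2}(k-\lambda)^{v-1}\neq 0$, since $k>\lambda$ for any nontrivial symmetric design. Thus $N$ is invertible over $\mathbb{Q}$.

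Next I would translate the action of $x$ into matrix language. The automorphism $x$ induces a permutation of $\Pmc$ and a permutation of $\Bmc$; let $P$ and $Q$ be the corresponding $v\times v$ permutation matrices. The defining property of an automorphism, namely that $\alpha\in[B]$ if and only if $\alpha^{x}\in[B^{x}]$ for all $\alpha\in\Pmc$ and $B\in\Bmc$, says precisely that $x$ preserves the entries of $N$, which (with the natural conventions for $P$ and $Q$) amounts to the single matrix identity $PN=NQ$. Since $N$ is invertible, this yields $Q=N^{-1}PN$, so that $P$ and $Q$ are similar matrices. From here both conclusions follow: similar matrices have equal trace, and the trace of a permutation matrix counts its fixed points, so $|\fix_{\Pmc}(x)|=\operatorname{tr}(P)=\operatorname{tr}(Q)=|\fix_{\Bmc}(x)|$, giving the first assertion. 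For the cycle structure, I would use that the characteristic polynomial of a permutation matrix factorises as $\prod_{i}(t^{\ell_{i}}-1)$, the product running over its cycles with $\ell_{i}$ their lengths, and that from this factorisation the multiset of cycle lengths is uniquely determined. As $P$ and $Q$ are similar they share a characteristic polynomial, and hence the same cycle type.

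The only delicate points are bookkeeping rather than conceptual: fixing conventions so that the incidence condition becomes exactly $PN=NQ$ (and not $PN=NQ^{-1}$ or a transpose thereof), and justifying that the factorisation of the characteristic polynomial of a permutation matrix recovers the cycle type. Neither is a genuine obstacle; the substantive input is the nonsingularity of $N$, which is precisely where the symmetry of the design (equal numbers of points and blocks, together with $k>\lambda$) is used.
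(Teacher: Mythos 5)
Your proof is correct and is essentially the same argument as the paper's source for this lemma (Lander, Theorem 3.1 and Corollary 3.2, which the paper cites rather than reproves): nonsingularity of the incidence matrix $N$, the relation $PN=NQ$ giving similarity of the two permutation matrices, equality of traces for the fixed-point count, and equality of characteristic polynomials (which determine cycle type via the factorisation $\prod_i(t^{\ell_i}-1)$ and M\"obius inversion) for the cycle structure. One trivial bookkeeping remark: with rows indexed by points, $NN^{T}=(k-\lambda)I+\lambda J$ encodes that each point lies on $k$ blocks and each pair of points on $\lambda$ blocks (the block-intersection facts correspond to $N^{T}N$), but both identities hold for a symmetric design, so the determinant computation is unaffected.
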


In the light of Lemma~\ref{lem:fix}, for  an automorphism $x$ of a symmetric design $\Dmc=(\Pmc,\Bmc)$, we refer to the
cycle structure of the $x$-action on $\Pmc$, or on $\Bmc$, as the \emph{cycle structure} of $x$.

A pair $\Dmc'=(\Pmc',\Bmc')$ is called a \emph{subdesign} of a $2$-design $\Dmc=(\Pmc,\Bmc)$ if $\Pmc' \subseteq \Pmc$, each  $B'\in\Bmc'$ is of the form $B'=B\cap\Pmc'$ for some $B\in \Bmc$, and $\Dmc'$ is a $2$-design.  Note that for $\Dmc'$ to be a subdesign, there must be a constant $k'\geq2$ such that, for each $B\cap\Pmc'\in\Bmc'$, the size $|B\cap \Pmc'|=k'$. The subdesign $\Dmc'$ is called \emph{proper} if $\Pmc'\ne\Pmc$.


\begin{lemma}\cite[Lemma 9.5]{a:Kantor-2-trans}\label{lem:subdesign}
Let $\Dmc$ be a symmetric design with parameters $(v,k,\lambda)$. Suppose that $\Dmc'$ is a proper subdesign of $\Dmc$, and that $\Dmc'$ is a symmetric design with parameters $(v',k',\lambda)$. Then either $(k'-1)^{2}=k-\lambda$, or $k'(k'-1)\leq k-\lambda$. (The possibility that $k'=\lambda+1$ is not excluded.)
\end{lemma}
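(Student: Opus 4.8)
The plan is to read this as a ``Bruck completion'' statement: the first alternative $(k'-1)^2=k-\lambda$ is the tight case (analogous to a Baer subplane inside a projective plane), the second is the generic case, and the real content is to exclude the intermediate range $(k'-1)^2<k-\lambda<k'(k'-1)$. The engine of the proof will be a pair of counting identities for the intersection numbers $y_B:=|B\cap\Pmc'|$, as $B$ runs over the $v$ blocks of $\Dmc$. Counting incident pairs $(p,B)$ with $p\in\Pmc'$ gives $\sum_B y_B=v'k$, and counting triples $(p,p',B)$ with $p\neq p'$ in $\Pmc'$ and $p,p'\in B$ gives $\sum_B y_B(y_B-1)=\lambda v'(v'-1)$, which equals $v'k'(k'-1)$ since $\Dmc'$ is a symmetric $(v',k',\lambda)$ design.

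The crux is the second identity. Since each of the $v'$ blocks of $\Dmc'$ has the form $B\cap\Pmc'$ for some $B\in\Bmc$, there are at least $v'$ blocks with $y_B=k'$, and these alone contribute $v'k'(k'-1)$ to $\sum_B y_B(y_B-1)$. As every summand $y_B(y_B-1)$ is nonnegative and the total is \emph{exactly} $v'k'(k'-1)$, I conclude that there are exactly $v'$ blocks with $y_B=k'$ (call this family $\Fmc$, which is then precisely the set of blocks restricting to blocks of $\Dmc'$) and that every remaining block satisfies $y_B(y_B-1)=0$, i.e.\ $y_B\in\{0,1\}$. In particular no block meets $\Pmc'$ in more than $k'$ points, and a block meets it in exactly $k'$ points iff it lies in $\Fmc$. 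I regard this rigid dichotomy of intersection sizes --- forced purely by the second moment --- as the main point to spot; once it is in hand the rest is short and elementary.

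Next I would record the geometric consequence that distinct $B,C\in\Fmc$ satisfy $B\cap C\subseteq\Pmc'$: indeed $|B\cap C|=\lambda=|(B\cap\Pmc')\cap(C\cap\Pmc')|$ and the latter set is contained in $B\cap C$, so the two coincide. Hence the ``external parts'' $B\setminus\Pmc'$, for $B\in\Fmc$, are pairwise disjoint subsets of $\Pmc\setminus\Pmc'$, each of size $k-k'$, and every external point lies in at most one block of $\Fmc$.

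Finally I would split into two cases according to whether these $v'$ external parts cover all of $\Pmc\setminus\Pmc'$. If they do, then $v'(k-k')=v-v'$; since $\Dmc'$ is proper we have $v-v'>0$, so $k'<k$, and substituting $v=1+k(k-1)/\lambda$ and $v'=1+k'(k'-1)/\lambda$ the equality factors as $(k-k')\bigl(k-\lambda-(k'-1)^2\bigr)=0$, which with $k'<k$ yields $(k'-1)^2=k-\lambda$, the first alternative. Otherwise there is an external point $q$ lying in no block of $\Fmc$; then all $k$ blocks through $q$ lie outside $\Fmc$ and so meet $\Pmc'$ in at most one point, whereas counting pairs $(p,B)$ with $p\in\Pmc'$ and $q,p\in B$ gives $\sum_{B\ni q}|B\cap\Pmc'|=\lambda v'$. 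Combining, $\lambda v'\le k$, and since $\lambda v'=\lambda+k'(k'-1)$ this reads exactly $k'(k'-1)\le k-\lambda$, the second alternative. This argument makes no use of $\lambda=2$ and so proves the statement for all $\lambda$.
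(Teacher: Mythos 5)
Your proof is correct, but it takes a genuinely different route from the paper's. The paper does not prove the dichotomy at all: it simply cites Kantor's Lemma 9.5, which asserts that either $(k'-1)^2=k-\lambda$ or $\lambda v'\leq k$, and then does the one-line conversion ($\lambda v'\leq k$ gives $\lambda(v'-1)\leq k-\lambda$, which is $k'(k'-1)\leq k-\lambda$ since $k'(k'-1)=\lambda(v'-1)$ for the symmetric design $\Dmc'$). You instead supply the missing content from first principles: the second-moment identity $\sum_B y_B(y_B-1)=\lambda v'(v'-1)=v'k'(k'-1)$, matched against the at least $v'$ blocks restricting to blocks of $\Dmc'$, forces every other block to meet $\Pmc'$ in at most one point; this rigidity is exactly the step the paper outsources. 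Your case split then recovers the two alternatives in a transparent way: the covering case factors to $(k-k')\bigl(k-\lambda-(k'-1)^2\bigr)=0$ and yields the equality (the Baer-like case), while the external-point case reproduces precisely Kantor's inequality via $\sum_{B\ni q}|B\cap\Pmc'|=\lambda v'\leq k$, followed by the same final algebra as the paper. One hair-thin implicit step worth making explicit: when you invoke $|(B\cap\Pmc')\cap(C\cap\Pmc')|=\lambda$ for distinct $B,C\in\Fmc$, you need $B\cap\Pmc'$ and $C\cap\Pmc'$ to be \emph{distinct} blocks of $\Dmc'$ (otherwise their intersection has size $k'$, not $\lambda$); this follows because the restriction map from $\Fmc$ onto the $v'$ blocks of $\Dmc'$ is a surjection between finite sets of equal size $v'$, hence a bijection. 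In terms of trade-offs, the paper's citation buys brevity; your argument buys self-containedness, shows clearly where each alternative comes from (a covering configuration versus a tangent count at an external point), and, as you note, works for every $\lambda$, matching the generality of the cited lemma.
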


\begin{proof}
It follows from \cite[Lemma 9.5]{a:Kantor-2-trans} that either $(k'-1)^{2}=k-\lambda$, or $\lambda v'\leq k$, so assume that the latter holds.   Then $\lambda(v'-1)\leq k-\lambda$, and since $\Dmc'$ is a symmetric design we have
$k'(k'-1)=\lambda(v'-1)$, see \cite[Proposition 1.1]{b:lander}.
\end{proof}

\begin{lemma}\cite[Lemma 2.7]{a:Aschbacher-71}\label{lem:Autprims}
Let $\Dmc$ be a symmetric design with parameters $(v,k,\lambda)$, and let $p$ be a prime dividing $|\Aut(\Dmc)|$.
Then either $p$ divides $v$, or $p\leq k$.
\end{lemma}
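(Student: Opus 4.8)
The plan is to reduce everything to a single automorphism of prime order and to study its fixed substructure. By Cauchy's theorem, if the prime $p$ divides $|\Aut(\Dmc)|$ then $\Aut(\Dmc)$ contains an automorphism $x$ of order $p$. Since we want to show that $p\mid v$ or $p\leq k$, I would assume $p\nmid v$ and prove $p\leq k$ by deriving a contradiction from the extra assumption $p>k$.

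First I would locate a fixed point. As $x$ has order $p$, each of its cycles on the $v$ points has length $1$ or $p$, so $v\equiv |\fix_{\Pmc}(x)|\pmod p$; since $p\nmid v$ this forces $|\fix_{\Pmc}(x)|\geq 1$. Because an automorphism is by definition a permutation of the point-set, $x\neq 1$ means $x$ is not the identity permutation, so $|\fix_{\Pmc}(x)|<v$. Writing $f:=|\fix_{\Pmc}(x)|$, Lemma~\ref{lem:fix} gives $|\fix_{\Bmc}(x)|=f$ as well, with $1\leq f<v$.

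The crucial step exploits $p>k$. For a fixed point $\alpha$, the automorphism $x$ permutes the $k$ blocks of $[\alpha]$ in cycles of length $1$ or $p$; since $p>k$, no cycle of length $p$ fits, so every block through $\alpha$ is fixed. Applying the same reasoning to the dual design $(\Bmc,\Pmc)$ (which is symmetric with the same parameters, and on which $x$ again has $f$ fixed points by Lemma~\ref{lem:fix}), every point on a fixed block is fixed. Consequently the fixed points $\Pmc'=\fix_{\Pmc}(x)$ and fixed blocks $\Bmc'=\fix_{\Bmc}(x)$ form a substructure $\Dmc'$ in which each fixed block, being contained in $\Pmc'$, still has exactly $k$ points, and any two fixed points lie on exactly $\lambda$ fixed blocks (the $\lambda$ blocks joining them all pass through a fixed point, hence are all fixed). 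Thus $\Dmc'$ is a $2$-$(f,k,\lambda)$ design with $f$ points and $f$ blocks, i.e.\ it is symmetric, so the standard relation $\lambda(f-1)=k(k-1)$ holds \cite[Proposition~1.1]{b:lander}. Comparing this with $\lambda(v-1)=k(k-1)$ for $\Dmc$ forces $f=v$, contradicting $f<v$. (The degenerate case $f=1$ is excluded immediately, since the unique fixed point would already lie on $k\geq 2$ fixed blocks.) Hence $p\leq k$.

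I expect the main obstacle to be checking cleanly that the fixed substructure $\Dmc'$ is a genuine symmetric $2$-design rather than a degenerate incidence structure: verifying the constant block size $k$ and the exact count of $\lambda$ common fixed blocks for each pair, and dualising correctly through Lemma~\ref{lem:fix} to transfer the fixed-point argument from points to blocks. As an alternative finish, once $\Dmc'$ is identified as a proper symmetric subdesign with parameters $(f,k,\lambda)$, one could invoke Lemma~\ref{lem:subdesign}: its two alternatives $(k-1)^{2}=k-\lambda$ and $k(k-1)\leq k-\lambda$ are both impossible for nondegenerate parameters, giving the same contradiction.
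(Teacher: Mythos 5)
Your proof is correct. Note that the paper itself gives no argument for this lemma: it simply cites \cite[Lemma 2.7]{a:Aschbacher-71}, so there is no internal proof to compare against; what you have written is a complete, self-contained proof of a fact the paper outsources, and it is essentially the classical argument. The key steps all check: Cauchy's theorem produces $x$ of order $p$; $v\equiv f\pmod p$ with $p\nmid v$ gives $f=|\fix_{\Pmc}(x)|\geq 1$, and faithfulness of the automorphism action on points (built into the paper's definition of automorphism) gives $f<v$; the hypothesis $p>k$ forces every block through a fixed point to be fixed (the $k$ blocks of $[\alpha]$ split into cycles of length $1$ or $p>k$) and, dually, every point on a fixed block to be fixed; hence each fixed block lies inside $\fix_{\Pmc}(x)$, each pair of fixed points lies on exactly $\lambda$ fixed blocks, and each fixed point lies on exactly $k$ fixed blocks, so counting flag-pairs gives $\lambda(f-1)=k(k-1)=\lambda(v-1)$, whence $f=v$, a contradiction. (Indeed, this last count makes even the separate treatment of $f=1$ unnecessary, since $\lambda(f-1)=0<k(k-1)$.) One small caveat on your alternative finish via Lemma~\ref{lem:subdesign}: with $k'=k$, the branch $(k'-1)^{2}=k-\lambda$ is not vacuous in general --- it is satisfied by the degenerate parameters $(k,\lambda)=(2,1)$ --- so, as you anticipate, the direct finish through the parameter relation is the cleaner and fully general one.
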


Recall that, for a design $\Dmc=(\Pmc,\Bmc)$ and $\alpha\in\Pmc$, $B\in\Bmc$, we denote by $[\alpha], [B]$  the set of blocks containing $\alpha$, or the set of points contained in $B$, respectively.

\begin{lemma}\label{lem:fix-bi}
Let $\Dmc=(\Pmc,\Bmc)$ be a  biplane with parameters $(v,k,2)$ and $k\geq4$, and let $x\in \Aut(\Dmc)$ such that $\alpha \in \fix_\Pmc(x)$, so  there is a block $B \in \fix_\Bmc(x)$ by {\rm Lemma \ref{lem:fix}}. Then, setting $\sbf_{\alpha} = |\fix_{[\alpha]}(x)|$
and $\sbf_{B}=|\fix_{[B]}(x)|$, the following hold:
\begin{enumerate}[\rm (a)]
    \item if $\alpha\in B$, then $\rbf_{\alpha}=\rbf_{B}$, where $\rbf_{\alpha}$ is the number of
    $\langle x\rangle$-orbits of length two in $[\alpha]$, and  $\rbf_{B}$ is the number of $\langle x\rangle$-orbits of length two in $[B]$;
    \item $|\fix_\Bmc(x)|=\sbf_{B}(\sbf_{B}-1)/2+\rbf_{B}+1$;
    \item if $\alpha\in B$, then   $|\fix_\Pmc(x)|=\sbf_{\alpha}(\sbf_{\alpha}-1)/2+\rbf_{\alpha}+1$;
    \item if $\sbf_{B}>0$ and $\rbf_{B}=0$, then $\Dmc'=(\fix_\Pmc(x), \fix_\Bmc(x))$ is a subdesign  of $\Dmc$, and $\sbf_B$ is independent of the choice of $B\in\fix_\Bmc(x)$; moreover, either $(\sbf_{B}-1)^{2}=k-2$, or $\sbf_{B}(\sbf_{B}-1)\leq k-2$.
    \end{enumerate}
\end{lemma}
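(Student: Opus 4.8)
The plan is to build everything from two natural $\langle x\rangle$-equivariant bijections attached to a fixed flag, together with point-block duality and Lemmas~\ref{lem:fix} and \ref{lem:subdesign}. For part (a), I fix the incident pair $(\alpha,B)$ with $\alpha,B$ both fixed, and define $\phi\colon [B]\setminus\{\alpha\}\to[\alpha]\setminus\{B\}$ sending a point $\beta\in B$, $\beta\neq\alpha$, to the \emph{second} block through the pair $\{\alpha,\beta\}$ (the two blocks through $\{\alpha,\beta\}$ being $B$ and $\phi(\beta)$, since $\lambda=2$). The inverse sends $C\in[\alpha]$, $C\neq B$, to the second point of $B\cap C$; bijectivity is immediate from the biplane axioms. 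Because $x$ fixes $\alpha$ and $B$, the identity $x\{\alpha,\beta\}=\{\alpha,x\beta\}$ shows $\phi$ commutes with $x$, so it carries $\langle x\rangle$-orbits on $[B]\setminus\{\alpha\}$ bijectively to those on $[\alpha]\setminus\{B\}$, preserving lengths. Comparing the counts of length-two orbits gives $\rbf_\alpha=\rbf_B$, and comparing fixed points gives the bonus identity $\sbf_\alpha=\sbf_B$, which I reuse in (c).

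For part (b), I set up a second bijection, between the fixed blocks $C\neq B$ and the $x$-invariant $2$-subsets of $[B]$: a fixed block $C\neq B$ meets $B$ in an $x$-invariant pair $C\cap B$ (size $2$), and conversely an $x$-invariant pair $\{p,q\}\subseteq[B]$ lies in exactly two blocks, one being $B$, the other necessarily fixed since $x$ fixes $B$ and $\{p,q\}$; injectivity holds because two fixed blocks cannot restrict to the same pair on $B$. An $x$-invariant $2$-subset of $[B]$ is either a pair of fixed points or a length-two orbit, so there are $\binom{\sbf_B}{2}+\rbf_B$ of them, and adding $B$ gives $|\fix_\Bmc(x)|=\sbf_B(\sbf_B-1)/2+\rbf_B+1$. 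Part (c) then follows by transporting (b) to the dual biplane $(\Bmc,\Pmc)$, in which the fixed point $\alpha$ plays the role of a fixed block and $\sbf_\alpha,\rbf_\alpha$ are its dual $\sbf,\rbf$-invariants (equivalently, combine (a), (b) and Lemma~\ref{lem:fix}, using $\sbf_\alpha=\sbf_B$, $\rbf_\alpha=\rbf_B$).

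For part (d), assume $x\neq 1$, $\sbf_B>0$ and $\rbf_B=0$, and put $\Pmc'=\fix_\Pmc(x)$, $\Bmc'=\fix_\Bmc(x)$. Since $\rbf_B=0$, each fixed block $C\neq B$ must meet $B$ in two \emph{fixed} points (a swapped pair would be a length-two orbit in $[B]$), so every fixed block contains a fixed point; applying (a) to such a point then propagates $\rbf_C=0$ and $\sbf_C=\sbf_B$ to all fixed blocks $C$, giving constant block size $k'=\sbf_B$. Running the identical argument in the dual design is legitimate, because a fixed point $p\in[B]$ satisfies $\rbf_p=\rbf_B=0$ and $\sbf_p=\sbf_B>0$ by (a); it yields $\rbf_p=0$ for \emph{every} fixed point $p$. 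Hence for any two fixed points the two blocks through them cannot be swapped, so both are fixed, whence every pair of points of $\Pmc'$ lies in exactly two blocks of $\Bmc'$ and $\Dmc'$ is a $2$-design with $\lambda'=2$. By Lemma~\ref{lem:fix} it has equally many points and blocks, so it is a symmetric $(v',\sbf_B,2)$-design and, as $x\neq 1$, a proper subdesign; Lemma~\ref{lem:subdesign} then gives $(\sbf_B-1)^2=k-2$ or $\sbf_B(\sbf_B-1)\leq k-2$, with the degenerate cases $\sbf_B\in\{1,2\}$ satisfying the latter directly since $k\geq 4$.

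I expect part (d) to carry the real weight: the two structural claims that the fixed substructure has constant block size and parameter $\lambda'=2$. Both rest on using the hypothesis $\rbf=0$ to forbid $x$ from interchanging a pair of blocks (or points), and the cleanest way to obtain these statements simultaneously for all fixed blocks \emph{and} all fixed points is to observe that the argument is self-dual and apply it to $\Dmc$ and to its dual. The only remaining delicacy is checking the small parameter values so that Lemma~\ref{lem:subdesign} (or, in the degenerate cases, a direct estimate) applies.
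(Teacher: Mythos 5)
Your proposal is correct; I checked the two equivariant bijections and the propagation argument in (d), and they all go through. However, your route is organized quite differently from the paper's. The paper does not prove parts (a), (b), or the subdesign claim of (d) at all: it cites them from Aschbacher (\cite[Lemma 4.1]{a:Aschbacher-71}), and the only part it argues from scratch is (c), via an injection $\beta\mapsto\{B'\mid\{\alpha,\beta\}\subseteq B'\}$ from $\fix_\Pmc(x)\setminus\{\alpha\}$ onto the $x$-invariant $2$-subsets of $[\alpha]$ --- which is precisely the dual of your bijection for (b). You invert this organization: you prove (a) and (b) directly and then obtain (c) by dualizing (b), and, more substantially, you reprove Aschbacher's subdesign statement in (d) rather than citing it. The engine of your (d) is the bonus identity $\sbf_\alpha=\sbf_B$ for an incident fixed flag $(\alpha,B)$, which is not recorded in the lemma's statement but falls out of your equivariant bijection in (a); it lets you propagate $\rbf_C=0$ and $\sbf_C=\sbf_B$ from $B$ to every fixed block through a shared fixed point, and the self-dual form of the argument then gives $\rbf_q=0$ for every fixed point $q$, yielding $\lambda'=2$ and constant block size simultaneously. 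What each approach buys: the paper's is shorter and leans on the literature; yours is self-contained and, incidentally, more careful than the paper on the edge cases --- you make explicit the standing assumption $x\neq1$ (without which the final inequality in (d) fails) and you check $\sbf_B\in\{1,2\}$ directly, where the subdesign $\Dmc'$ is degenerate and Lemma~\ref{lem:subdesign} does not literally apply but the inequality $\sbf_B(\sbf_B-1)\leq k-2$ holds anyway since $k\geq4$. These are points the paper (and its citation of Aschbacher) passes over silently.
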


\begin{proof}
Parts (a)-(b) are proved in \cite[Lemma 4.1]{a:Aschbacher-71}. For part (c), assume that $\alpha\in B$, and consider the following map $f$ from points of $\fix_\Pmc(x)\setminus\{\alpha\}$ to unordered pairs of blocks from $[\alpha]$:
\[
f:\ \beta\quad \to\quad  \{B'\in \Bmc \mid \{\alpha,\beta\}\subseteq B'\}.
\]
The map $f$ is injective since $\Dmc$ is a biplane. Also each $f(\beta)$ is either an unordered pair of blocks from $\fix_{[\alpha]}(x)$ (and each unordered pair of such blocks intersects in $\{\alpha,\beta\}$ with $\beta\in \fix_\Pmc(x)$), or a pair of blocks from $[\alpha]$ which forms an  $\langle x\rangle$-orbit of length two (and each such orbit also   intersects in $\{\alpha,\beta\}$ with $\beta\in \fix_\Pmc(x)$). 
Thus we have $|\fix_\Pmc(x)|-1 = \sbf_{\alpha}(\sbf_{\alpha}-1)/2+\rbf_{\alpha}$, proving part (c).

Finally, for part (d), assume that $\sbf_B>0$ and $\rbf_B=0$. Then by \cite[Lemma 4.1(iv)]{a:Aschbacher-71}, the incidence structure
$\Dmc'$ is a subdesign  of $\Dmc$. This means first that  $\sbf_B$ is independent of the choice of $B\in\fix_\Bmc(x)$, and secondly that distinct blocks  from $\fix_\Bmc(x)$ intersect in $\lambda=2$ points of $\fix_\Pmc(x)$. The remaining assertions follow from Lemma~\ref{lem:subdesign}.
\end{proof}

\begin{lemma}\label{lem:fix-v}
Let $\Dmc=(\Pmc,\Bmc)$ be a  biplane with parameters $(v,k,2)$ and $k\geq4$, and let $x\in \Aut(\Dmc)$ such that $|\fix_\Bmc(x)|>0$, and $x$ has no $2$-cycles in its cycle structure.  Then  the sets $[B]\setminus \fix_{[B]}(x)$ are pairwise disjoint, for $B\in \fix_\Bmc(x)$,
$\sbf=|\fix_{[B]}(x)|$ is independent of $B\in \fix_\Bmc(x)$, and  $v\geq |\fix_{\Bmc}(x)|\cdot (k-\sbf+1)$.
\end{lemma}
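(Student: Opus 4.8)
The plan is to extract one structural fact about how fixed blocks meet, and then read off all three conclusions from it. Write $f = |\fix_\Bmc(x)|$, and recall that by Lemma~\ref{lem:fix} we also have $|\fix_\Pmc(x)| = f$, so there is at least one fixed point. First I would prove the structural claim that any two distinct fixed blocks $B_1, B_2 \in \fix_\Bmc(x)$ meet only in fixed points, that is, $[B_1] \cap [B_2] \subseteq \fix_\Pmc(x)$. Since $\Dmc$ is a biplane, two distinct blocks meet in exactly $\lambda = 2$ points, say $[B_1] \cap [B_2] = \{\beta, \gamma\}$. As $x$ fixes both $B_1$ and $B_2$, it preserves this intersection, so it either fixes each of $\beta, \gamma$ or interchanges them; the latter would be an $\langle x \rangle$-orbit of length two, contradicting the hypothesis that $x$ has no $2$-cycles. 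Hence $\beta, \gamma \in \fix_\Pmc(x)$. The disjointness assertion is then immediate: any common point of $[B_1] \setminus \fix_{[B_1]}(x)$ and $[B_2] \setminus \fix_{[B_2]}(x)$ would be a non-fixed point of $[B_1] \cap [B_2]$, which the claim forbids, so these punctured blocks are pairwise disjoint.

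Next I would show that $\sbf = |\fix_{[B]}(x)|$ is constant on $\fix_\Bmc(x)$, splitting on the size of $f$. If $f = 1$ there is nothing to prove. If $f \geq 2$, then fixing any $B \in \fix_\Bmc(x)$ and choosing another fixed block $B'$, the structural claim shows that the two points of $[B] \cap [B']$ lie in $\fix_{[B]}(x)$, so $\sbf_B = |\fix_{[B]}(x)| \geq 2 > 0$. Since $x$ has no $2$-cycles we also have $\rbf_B = 0$ (there is no $\langle x \rangle$-orbit of length two in $[B]$), so the hypotheses of Lemma~\ref{lem:fix-bi}(d) are satisfied. That lemma then gives that $\Dmc' = (\fix_\Pmc(x), \fix_\Bmc(x))$ is a subdesign of $\Dmc$ and, crucially, that $\sbf_B$ does not depend on the choice of $B$. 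This is the step I expect to need the most care, because the appeal to Lemma~\ref{lem:fix-bi}(d) is legitimate only once the possibility $\sbf_B = 0$ has been excluded, which is exactly what the structural claim (together with $f \geq 2$) provides.

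Finally I would derive the inequality by a point count. The $f$ points of $\fix_\Pmc(x)$ are disjoint from the set $U = \bigcup_{B \in \fix_\Bmc(x)} \big([B] \setminus \fix_{[B]}(x)\big)$, since $U$ consists entirely of non-fixed points. By the disjointness just proved this union is a disjoint one, and by the constancy of $\sbf$ each of its $f$ terms has size $k - \sbf$, whence $|U| = f(k-\sbf)$. Therefore
\[
v = |\Pmc| \;\geq\; |\fix_\Pmc(x)| + |U| \;=\; f + f(k - \sbf) \;=\; f\,(k - \sbf + 1),
\]
which is the desired bound $v \geq |\fix_\Bmc(x)| \cdot (k - \sbf + 1)$. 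The entire argument is driven by the single observation that two fixed blocks share only fixed points; once that is in hand, the disjointness, the constancy (via Lemma~\ref{lem:fix-bi}(d)), and the counting bound all follow with little further work.
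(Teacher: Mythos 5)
Your proof is correct and is essentially the paper's own argument: both rest on the observation that two distinct fixed blocks meet in an $x$-invariant $2$-set which, absent $2$-cycles, must consist of fixed points, from which the disjointness, the bound $\sbf_B\geq 2$ enabling Lemma~\ref{lem:fix-bi}(d), and the final point count (via Lemma~\ref{lem:fix}) all follow. The only cosmetic difference is where the trivial case $|\fix_\Bmc(x)|=1$ is dispatched.
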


\begin{proof}
If $|\fix_\Bmc(x)|=1$, then  there is nothing to prove. So assume that $|\fix_\Bmc(x)|\geq2$.
Let $A, B\in \fix_\Bmc(x)$ with $A\ne B$, so $A\cap B$ is an $x$-invariant subset of size $2$. Since $x$ has no $2$-cycles in its cycle structure, it follows that $A\cap B\subseteq \fix_\Pmc(x)$. Thus $[B]\setminus \fix_{[B]}(x)$ is disjoint from $[A]\setminus \fix_{[A]}(x)$, and each of these subsets is contained in $\Pmc\setminus\fix_\Pmc(x)$. Further, since $A\cap B\subseteq \fix_\Pmc(x)$, it follows that
$\sbf_B=|\fix_{[B]}(x)|\geq 2$, and by our assumption on $x$,   the number $\rbf_{B}$ of $\langle x\rangle$-orbits of length two in $[B]$
is zero. Thus, by Lemma~\ref{lem:fix-bi}(d),   $\sbf_B$ is independent of  $B\in\fix_\Bmc(x)$, say  $\sbf=\sbf_B$. Then
\[
v\geq |\fix_{\Pmc}(x)|+\sum_{B\in \fix_{\Bmc}(x)} |[B]\setminus \fix_{[B]}(x)| =   |\fix_{\Pmc}(x)| +  |\fix_{\Bmc}(x)| (k-\sbf),
\]
and since  $|\fix_{\Pmc}(x)|=|\fix_{\Bmc}(x)|$ by Lemma~\ref{lem:fix}, the required inequality follows.
\end{proof}

We also consider involutory automorphisms, that is, those of order $2$.

\begin{lemma}\label{lem:bi-2}
Let $\Dmc=(\Pmc,\Bmc)$ be a  biplane with parameters $(v,k,2)$ and $k\geq4$, and let $x\in \Aut(\Dmc)$ of order $2$ such that
 $|\fix_\Pmc(x)|>0$. Let  $\alpha\in \fix_\Pmc(x)$,  $B\in \fix_\Bmc(x)$, and let $\sbf_{\alpha}=|\fix_{[\alpha]}(x)|$ and $\sbf_B=|\fix_{[B]}(x)|$. Then
\begin{enumerate}[\rm (a)]
    \item either  $\sbf_{\alpha}$  is independent of  $\alpha\in \fix_\Pmc(x)$, or $\sbf_{\alpha}\in\{0,2\}$ for all  $\alpha\in \fix_\Pmc(x)$; in both cases $|\fix_\Pmc(x)|=(k+1+(\sbf_{\alpha}-1)^{2})/2$;
    \item either $\sbf_{B}$  is independent of $B\in \fix_\Bmc(x)$, or $\sbf_{B}\in\{0,2\}$ for all  $B\in \fix_\Bmc(x)$;
    \item either $(\sbf_{\alpha}-2)^{2}=k-2$, or $(\sbf_{\alpha}-1)^{2}\leq k-5$. 
\end{enumerate}
\end{lemma}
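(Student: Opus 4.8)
My plan is to base all three parts on a single identity forced by $x$ having order two, treating (a) and (b) as a question about integer solutions of a quadratic and (c) as an application of the subdesign inequality of Lemma~\ref{lem:subdesign}.

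For (a) I would first use $x^2=1$ to observe that every $\langle x\rangle$-orbit on the block set $[\alpha]$ has length $1$ or $2$, so that $k=\sbf_\alpha+2\rbf_\alpha$ and hence $\rbf_\alpha=(k-\sbf_\alpha)/2$, where $\rbf_\alpha$ counts the length-two orbits as in Lemma~\ref{lem:fix-bi}. Substituting into the identity $|\fix_\Pmc(x)|=\sbf_\alpha(\sbf_\alpha-1)/2+\rbf_\alpha+1$ of Lemma~\ref{lem:fix-bi}(c) --- whose proof needs only that $\alpha$ is fixed, and which therefore holds even when $\sbf_\alpha=0$ --- yields $|\fix_\Pmc(x)|=\bigl((\sbf_\alpha-1)^2+k+1\bigr)/2$, the displayed formula. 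Since the left-hand side is independent of $\alpha$, the quantity $(\sbf_\alpha-1)^2$ is a constant $t^2$ with $t\geq 0$ an integer, forcing $\sbf_\alpha\in\{1-t,1+t\}$; for $t=0$ or $t\geq 2$ only one of these is a nonnegative integer, so $\sbf_\alpha$ is constant, while $t=1$ gives $\sbf_\alpha\in\{0,2\}$. Part (b) is the verbatim dual argument, using Lemma~\ref{lem:fix-bi}(b) together with the equality $|\fix_\Pmc(x)|=|\fix_\Bmc(x)|$ of Lemma~\ref{lem:fix}.

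For (c) the intention is to extract a proper symmetric subdesign with $\lambda=2$ from the fixed structure and then invoke Lemma~\ref{lem:subdesign}. A preliminary observation anchors the block size to $\sbf_\alpha$: for an incident fixed flag $(\alpha,B)$ the fixed points on $B$ are precisely $\alpha$ together with the second intersection points of $B$ with the other fixed blocks through $\alpha$, so that $\sbf_B=\sbf_\alpha$. The shift in the statement --- the equation $(\sbf_\alpha-2)^2=k-2$ rather than $(\sbf_\alpha-1)^2=k-2$ --- signals that the relevant subdesign has block size $k'=\sbf_\alpha-1$, one fewer than the number $\sbf_B=\sbf_\alpha$ of fixed points on a fixed block, so that it is a derived-type object rather than the naive configuration on all fixed points and fixed blocks. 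Granting such a proper symmetric subdesign, Lemma~\ref{lem:subdesign} returns either $(k'-1)^2=(\sbf_\alpha-2)^2=k-2$, the first alternative, or the inequality $k'(k'-1)=(\sbf_\alpha-1)(\sbf_\alpha-2)\leq k-2$; a final step then sharpens this to the claimed $(\sbf_\alpha-1)^2\leq k-5$, using the integrality forced by the formula of part (a) and discarding the small configurations in which only the weaker estimate survives.

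The main obstacle is precisely the construction of that symmetric subdesign, because the configuration $(\fix_\Pmc(x),\fix_\Bmc(x))$ carried by all fixed points and fixed blocks is in general not a $2$-design. Indeed two fixed blocks meet in an $x$-invariant pair of points, which for an involution is either two fixed points or a single $\langle x\rangle$-orbit of length two; correspondingly a pair of fixed points lies on $0$ or $2$ fixed blocks rather than on a constant number. Thus one cannot apply Lemma~\ref{lem:subdesign} to this configuration directly: the real work is to control exactly when fixed blocks meet in a length-two orbit, and thereby to isolate a genuine proper symmetric subdesign, or else to replace the appeal to Lemma~\ref{lem:subdesign} by a direct double count of fixed-point/fixed-block incidences that produces the quadratic inequality in $\sbf_\alpha$. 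Either route requires a separate treatment of the configurations with very few fixed points --- the same degenerate cases that fix the exact constant in $(\sbf_\alpha-1)^2\leq k-5$ --- and this bookkeeping is where I expect the difficulty to concentrate.
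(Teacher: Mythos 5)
Your parts (a) and (b) are correct and are essentially the paper's argument. You recover the formula $|\fix_\Pmc(x)|=(k+1+(\sbf_{\alpha}-1)^{2})/2$ by combining Lemma~\ref{lem:fix-bi}(c) with the involution identity $k=\sbf_{\alpha}+2\rbf_{\alpha}$ (and you are right that the proof of Lemma~\ref{lem:fix-bi}(c) never actually uses the hypothesis $\alpha\in B$, so it applies even to fixed points lying on no fixed block), and the dichotomy then follows by equating this constant quantity across fixed points; the paper quotes Aschbacher's Lemma 4.2 for the formula and obtains the dichotomy in exactly this way, with (b) as the dual computation via Lemma~\ref{lem:fix}.

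Part (c), by contrast, has a genuine gap, and it is not one that the deferred ``bookkeeping'' can close. Write $s=\sbf_{\alpha}$. Beyond the fact that the ``derived-type'' subdesign with block size $s-1$ is never constructed, the deeper problem is that the route through Lemma~\ref{lem:subdesign} is structurally too weak for the target: its second alternative would deliver only $(s-1)(s-2)\leq k-2$, which falls short of the claimed $(s-1)^{2}\leq k-5$ by $(s-1)^{2}-(s-1)(s-2)=s-1$, a deficit growing like $\sqrt{k}$. The only further leverage you invoke is the integrality of $f=(k+1+(s-1)^{2})/2$, i.e.\ $s\equiv k\pmod{2}$, and parity cannot bridge a gap of size $s-1$. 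Concretely, $s=5$, $k=15$ satisfies $s\equiv k\pmod{2}$, $(s-1)(s-2)=12\leq 13=k-2$ and $(s-2)^{2}=9\neq 13$, yet $(s-1)^{2}=16>10=k-5$: every fact your argument could produce is consistent with these values while the conclusion of (c) fails, so no terminal case analysis can repair the implication. Note that the paper does not prove (c) internally either; it cites Arasu and Mavron \cite[Theorem~3.3]{a:Arasu-93}. The real content of that theorem, visible in Corollary~\ref{cor:fix-bi-bound}(a), is a gap result: an involution fixes either at most $k-2$ points or exactly $k+\sqrt{k-2}$ points, with nothing in between, and establishing that dichotomy requires a counting argument for the fixed-point structure that is strictly sharper than the subdesign inequality you rely on. Reconstructing that argument is precisely the missing step.
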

\begin{proof}
Part (b) and the last assertion of part (a) are proved in \cite[Lemma 4.2]{a:Aschbacher-71}; the rest of part (a) follows on equating the expressions for $|\fix_\Pmc(x)|$ in part (a) for different points in $\fix_\Pmc(x)$. Finally part (c) is  \cite[Theorem~3.3]{a:Arasu-93}.
\end{proof}

We put together information from the last few lemmas in the following corollary, and use the notation $\sbf_\alpha, \sbf_B$ from there.

\begin{corollary}\label{cor:fix-bi-bound}
Let $\Dmc=(\Pmc,\Bmc)$ be a  biplane with parameters $(v,k,2)$ and $k\geq4$, let $x\in \Aut(\Dmc)$, and let $f:=|\fix_{\Pmc}(x)|=|\fix_{\Bmc}(x)|$. Then
\begin{enumerate}[\rm (a)]
    \item if $x$ is an involution and $\alpha\in \fix_\Pmc(x)$, then either $f\leq k-2$, or $k-2$ is a square and $f=k+\sqrt{k-2}=k+\sbf_{\alpha}-2$;
    \item if $o(x)$ is odd and $B\in \fix_\Bmc(x)$, then $f=\sbf_{B}(\sbf_{B}-1)/2+1$; further, either $f\leq k/2$, or  $k-2$ is a square and $f=(k+\sqrt{k-2})/2=(k+\sbf_{B}-1)/2$.
\end{enumerate}
Moreover, if $1\ne G\leq\Aut(\Dmc)$, then  $|\fix_{\Pmc}(G)|\leq k+\sqrt{k-2}\leq (\sqrt{2}+4)k/4$, and if $k\geq 6$, then $|\fix_{\Pmc}(G)|\leq 4k/3$.
\end{corollary}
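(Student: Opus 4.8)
The plan is to prove Corollary~\ref{cor:fix-bi-bound} by assembling the quantitative bounds from the preceding lemmas, treating the involution case, the odd-order case, and the reduction to general automorphisms in turn. First I would handle part~(a): assume $x$ is an involution with $\alpha\in\fix_\Pmc(x)$. By Lemma~\ref{lem:bi-2}(a) we have $f=(k+1+(\sbf_\alpha-1)^2)/2$, and by Lemma~\ref{lem:bi-2}(c) either $(\sbf_\alpha-2)^2=k-2$ or $(\sbf_\alpha-1)^2\le k-5$. In the second subcase, substituting directly into the formula for $f$ gives $f\le(k+1+(k-5))/2=k-2$. In the first subcase $(\sbf_\alpha-2)^2=k-2$ forces $k-2$ to be a perfect square with $\sbf_\alpha-2=\sqrt{k-2}$ (taking the nonnegative root, since a negative value would shrink $f$ and land back in the first alternative), whence $\sbf_\alpha-1=\sqrt{k-2}+1$ and a short computation yields $f=k+\sqrt{k-2}=k+\sbf_\alpha-2$. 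This gives the dichotomy claimed in~(a).

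Next I would treat part~(b), where $o(x)$ is odd and $B\in\fix_\Bmc(x)$. Since $x$ has odd order it has no $2$-cycles, so the number $\rbf_B$ of $\langle x\rangle$-orbits of length two in $[B]$ is zero; then Lemma~\ref{lem:fix-bi}(b) immediately gives $f=\sbf_B(\sbf_B-1)/2+1$. For the refined bound I would invoke Lemma~\ref{lem:fix-bi}(d) (applicable once $\sbf_B>0$ and $\rbf_B=0$), which yields the dichotomy $(\sbf_B-1)^2=k-2$ or $\sbf_B(\sbf_B-1)\le k-2$. In the latter case $f=\sbf_B(\sbf_B-1)/2+1\le(k-2)/2+1=k/2$; in the former, $k-2$ is a square, $\sbf_B-1=\sqrt{k-2}$, and substituting gives $f=(k-2)/2+1=k/2$ again—so I must be careful here, and I expect the intended reading is $f=(k+\sqrt{k-2})/2$, which comes from writing $\sbf_B(\sbf_B-1)/2+1=((\sbf_B-1)^2+(\sbf_B-1))/2+1=((k-2)+\sqrt{k-2})/2+1=(k+\sqrt{k-2})/2$. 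The case $\sbf_B=0$ (no fixed blocks through no fixed point structure) degenerates to $f=1\le k/2$ since $k\ge4$.

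For the final ``moreover'' clause I would argue that any nontrivial $g\in G$ has either even or odd order; taking a power reduces to an automorphism of prime order, and the global fixed set $\fix_\Pmc(G)$ is contained in $\fix_\Pmc(x)$ for any $1\ne x\in G$. Choosing $x$ suitably and applying parts~(a)--(b), the largest possible value of $f$ across both cases is $k+\sqrt{k-2}$ (the involution bound dominating the odd-order bound $(k+\sqrt{k-2})/2$). The elementary inequalities $\sqrt{k-2}\le\sqrt{2}\,k/4$ for $k\ge4$ give $k+\sqrt{k-2}\le(\sqrt{2}+4)k/4$, and for $k\ge6$ the sharper estimate $\sqrt{k-2}\le k/3$ yields $|\fix_\Pmc(G)|\le 4k/3$; both are routine one-variable verifications once the maximising case is identified. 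I expect the main obstacle to be the bookkeeping in pinning down exactly which alternative of each lemma produces the stated closed forms—especially reconciling the square-case values of $f$ in part~(b)—rather than any conceptual difficulty, since all the structural content is already supplied by Lemmas~\ref{lem:fix-bi} and~\ref{lem:bi-2}.
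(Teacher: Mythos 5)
Your proposal is correct and takes essentially the same route as the paper: part (a) from Lemma~\ref{lem:bi-2}(a),(c) with a case split on the dichotomy, part (b) from Lemma~\ref{lem:fix-bi}(b),(d) with $\rbf_{B}=0$, and the final bounds by passing to an element of prime order in $G$ together with the elementary inequalities $k-2\leq k^{2}/8$ and $k-2\leq k^{2}/9$ (equivalently $(k-4)^{2}\geq 0$ and $(k-3)(k-6)\geq 0$). The only cosmetic difference is that the paper first disposes of the cases $\sbf_{\alpha}\in\{0,1\}$ before treating the square case, whereas you absorb them via the choice of the nonnegative root, and your self-corrected computation $f=\bigl((\sbf_{B}-1)^{2}+(\sbf_{B}-1)\bigr)/2+1=(k+\sqrt{k-2})/2$ in part (b) is exactly the paper's.
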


\begin{proof}
(a) Suppose first that $o(x)=2$ and apply Lemma~\ref{lem:bi-2} parts (a) and (c). Suppose first that 
$\sbf_\alpha=1$, for some $\alpha\in\fix_\Pmc(x)$. Then by Lemma~\ref{lem:bi-2}(c), $k\geq 5+(\sbf_\alpha-1)^2=5$, (since $k-2\geq 2 >(\sbf_\alpha-2)^2$); and then Lemma~\ref{lem:bi-2}(a) gives $f=(k+1)/2\leq k-2$. Suppose next that $\sbf_\alpha=0$, for some $\alpha\in\fix_\Pmc(x)$. Then by Lemma~\ref{lem:bi-2}(c), $k\geq 6$ and by Lemma~\ref{lem:bi-2}(a), $f=(k+2)/2\leq k-2$. Thus, we may assume that $\sbf_\alpha\geq2$, for each  $\alpha\in\fix_\Pmc(x)$. 
Suppose next that 
 $k-2=(\sbf_{\alpha}-2)^{2}$ (a square). Then $\sbf_\alpha\geq 4$ since $k\geq4$, and
\[
(\sbf_{\alpha}-1)^{2}= (\sbf_{\alpha}-2)^{2} +2(\sbf_{\alpha}-2)+1 = k-1 +2 \sqrt{k-2}.
\]
Hence,  by Lemma~\ref{lem:bi-2}(a), $f=(k+1+(\sbf_{\alpha}-1)^{2})/2=k+\sqrt{k-2}=k+\sbf_{\alpha}-2$. If this is not the case, then by Lemma~\ref{lem:bi-2}(c), we have   $(\sbf_{\alpha}-1)^{2}\leq k-5$, and hence  $f=(k+1+(\sbf_{\alpha}-1)^{2})/2\leq k-2$. 

(b)  Now assume that $o(x)$ is odd and $B\in \fix_\Bmc(x)$, and note that Lemma~\ref{lem:fix-bi}(b) applies with $\rbf_B=0$, yielding  $f=\sbf_{B}(\sbf_{B}-1)/2+1$. If $\sbf_B=0$, then $f<k/2$, so assume that $\sbf_B>0$. Then by Lemma~\ref{lem:fix-bi}(d),  either
$\sbf_{B}(\sbf_{B}-1)\leq k-2$ which implies that $f\leq k/2$, or $k-2=(\sbf_{B}-1)^{2}$ (a square) and
\[
f=\sbf_{B}(\sbf_{B}-1)/2+1=[\sqrt{k-2}\cdot (\sqrt{k-2}+1)+2]/2 =(k+\sqrt{k-2})/2.
\]
Thus part (b) is proved.

Note that, by parts (a) and (b), we always have $f\leq k+\sqrt{k-2}$, and since $k-2\leq k^2/8$, for all $k\geq4$, this gives
$f\leq k+\sqrt{k-2}<k+(k/\sqrt{8})=(\sqrt{2}+4)k/4$, while if $k\geq 6$, then $k-2\leq k^2/9$, so $f\leq k+\sqrt{k-2}\leq 4k/3$.
The final assertions follow since  $|\fix_{\Pmc}(G)|\leq  |\fix_{\Pmc}(x)|$, for all non-identity $x\in G$.
\end{proof}


\begin{lemma}\label{lem:conj}
\cite[Lemma 2.3]{a:CNP-alt-linear}
Let $G$ be a group acting transitively on a set $\Omega$, let $\alpha\in \Omega$, and let $x\in G$ such that $f:=|\fix_{\Omega}(x)|>0$. Then $|\Omega|/f=u/u_{1}$, where $u$ is the number of $G$-conjugates of $x$, and $u_{1}$ is the number of $G$-conjugates of $x$ lying in the stabiliser $G_{\alpha}$. In particular, $|\Omega|/f\leq |G:\Cbf_{G}(x)|$, where $\Cbf_{G}(x)$ is the centraliser of $x$ in $G$.
\end{lemma}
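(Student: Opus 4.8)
The statement is a standard orbit-counting (Burnside-type) lemma, so my plan is a double-counting argument applied to the conjugacy class of $x$. First I would record the two basic facts that make the count work. By the orbit-stabiliser theorem applied to the conjugation action of $G$ on itself, the number of $G$-conjugates of $x$ is $u=|x^{G}|=|G:\Cbf_{G}(x)|$. Second, conjugate elements have the same number of fixed points in the action on $\Omega$: for $g\in G$ one has $\fix_{\Omega}(g^{-1}xg)=g^{-1}\cdot\fix_{\Omega}(x)$ as subsets of $\Omega$, so every $y\in x^{G}$ satisfies $|\fix_{\Omega}(y)|=f$.

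The core of the argument is to count the incidence set
\[
S=\{(\beta,y)\ :\ y\in x^{G},\ \beta\in\fix_{\Omega}(y)\}
\]
in two ways. Summing first over $y\in x^{G}$, each such $y$ contributes exactly $f$ fixed points, so $|S|=u\,f$. Summing instead over $\beta\in\Omega$, the contribution of $\beta$ is the number of $G$-conjugates of $x$ lying in $G_{\beta}$, that is $|x^{G}\cap G_{\beta}|$. Here transitivity is essential: since $G$ is transitive, all point-stabilisers are conjugate, and conjugation permutes $x^{G}$, so $|x^{G}\cap G_{\beta}|$ is independent of $\beta$ and equals $u_{1}=|x^{G}\cap G_{\alpha}|$. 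Hence $|S|=|\Omega|\,u_{1}$. Equating the two counts gives $u\,f=|\Omega|\,u_{1}$, which rearranges to $|\Omega|/f=u/u_{1}$, as required.

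For the final inequality I would observe that the hypothesis $f>0$ guarantees $u_{1}\geq 1$: choosing any $\beta_{0}\in\fix_{\Omega}(x)$ and writing $\beta_{0}=\alpha^{g}$ by transitivity, we get $G_{\beta_{0}}=g^{-1}G_{\alpha}g$, so $gxg^{-1}\in G_{\alpha}$ is a $G$-conjugate of $x$ inside $G_{\alpha}$. With $u_{1}\geq 1$ the identity $|\Omega|/f=u/u_{1}$ immediately yields $|\Omega|/f\leq u=|G:\Cbf_{G}(x)|$.

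I do not anticipate a genuine obstacle: the only points requiring care are that the per-point count $u_{1}$ is well defined (which rests on transitivity, forcing the stabilisers to be conjugate), and that $u_{1}\geq 1$ for the ``in particular'' clause (which rests on $f>0$). Both are short; the substantive content is simply the symmetric double count of $S$.
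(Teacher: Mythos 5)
Your proof is correct and complete: the double count of the incidence set $S$, the observation that conjugate elements have equally many fixed points, the use of transitivity to make $|x^{G}\cap G_{\beta}|$ independent of $\beta$, and the verification that $f>0$ forces $u_{1}\geq 1$ are exactly the ingredients needed. Note that the paper itself gives no proof of this lemma — it is quoted from Camina–Neumann–Praeger \cite[Lemma 2.3]{a:CNP-alt-linear} — and your argument is the standard one underlying that reference, so there is nothing to reconcile between the two approaches.
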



\begin{corollary}\label{cor:k-upper}
Let $\Dmc=(\Pmc,\Bmc)$ be a  biplane with parameters $(v,k,2)$ and $k\geq4$, and suppose that $G\leq \Aut(\Dmc)$ is transitive on $\Pmc$.  If some non-identity element $x\in G$ fixes a point, then $k< (\sqrt{2}+4)u/2+1<3u+1$, where $u$ is the number of conjugates of $x$ in $G$.
\end{corollary}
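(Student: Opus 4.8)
The plan is to combine the conjugacy-counting bound of Lemma~\ref{lem:conj} with the fixed-point bound of Corollary~\ref{cor:fix-bi-bound}, and then to substitute the exact value of $v$ coming from Lemma~\ref{lem:basic}. First I would fix a point $\alpha\in\fix_\Pmc(x)$, which exists by hypothesis, and set $f:=|\fix_\Pmc(x)|>0$. Since $x$ fixes $\alpha$ we have $x\in G_\alpha$, so $x$ is itself a $G$-conjugate of $x$ lying in $G_\alpha$; thus, in the notation of Lemma~\ref{lem:conj}, $u_{1}\geq1$. Applying Lemma~\ref{lem:conj} with $\Omega=\Pmc$ (so $|\Omega|=v$) gives $v/f=u/u_{1}\leq u$, that is, $v\leq uf$.

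Next I would bound $f$ from above. Because every power of $x$ fixes each point fixed by $x$, we have $\fix_\Pmc(\langle x\rangle)=\fix_\Pmc(x)$, so the final (``moreover'') assertion of Corollary~\ref{cor:fix-bi-bound}, applied to the nontrivial group $\langle x\rangle$, yields $f\leq k+\sqrt{k-2}\leq(\sqrt2+4)k/4$. (Applying the corollary to $\langle x\rangle$ rather than to $x$ directly avoids having to treat separately the case in which $o(x)$ is composite, since $\langle x\rangle$ automatically contains an element of prime order to which parts (a) or (b) apply.) Combining this with $v\leq uf$ and with the identity $v=(k^{2}-k+2)/2$ obtained from \eqref{eq:basic}, I obtain
\[
\frac{k^{2}-k+2}{2}=v\leq uf\leq\frac{\sqrt2+4}{4}\,uk .
\]

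Finally I would rearrange this inequality. Multiplying through by $2/k>0$ gives $k-1+2/k\leq(\sqrt2+4)u/2$, and since $2/k>0$ this forces $k-1<(\sqrt2+4)u/2$, i.e.\ $k<(\sqrt2+4)u/2+1$, which is the first claimed bound. The second inequality is immediate from $\sqrt2<2$, which gives $(\sqrt2+4)/2<3$ and hence $(\sqrt2+4)u/2+1<3u+1$. There is no genuine obstacle here: the argument is a direct chaining of the earlier results, and the only point requiring a little care is ensuring the fixed-point bound applies to a possibly composite-order $x$, which is handled cleanly by passing to $\langle x\rangle$.
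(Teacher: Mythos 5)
Your proof is correct and follows essentially the same route as the paper: both combine Lemma~\ref{lem:conj} (with $u_{1}\geq 1$ coming from transitivity and $f>0$) with the fixed-point bound of Corollary~\ref{cor:fix-bi-bound} and the identity $k(k-1)=2(v-1)$ from \eqref{eq:basic}, differing only in the order of the algebraic substitutions. Your device of applying the ``moreover'' clause to $\langle x\rangle$ rather than to $x$ itself is a slightly cleaner way to cover elements of composite order, a point the paper passes over implicitly.
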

\begin{proof}
By assumption, $f=|\fix_\Pmc(x)|>0$, so by Corollary~\ref{cor:fix-bi-bound} and \eqref{eq:basic},  $f\leq  (\sqrt{2}+4)k/4=(\sqrt{2}+4)(v-1)/(2k-2)$, and hence  $k-1\leq (\sqrt{2}+4)(v-1)/(2f)<(\sqrt{2}+4)v/(2f)$. Since $G$ is point-transitive and $f>0$,  the number $u_{1}$ of $G$-conjugates of $x$ in $G_{\alpha}$ is at least $1$. Hence by  Lemma~\ref{lem:conj}, $k-1 < (\sqrt{2}+4)v/(2f)\leq (\sqrt{2}+4)u/2 < 3u$.
\end{proof}

\subsection{Biplanes and difference sets}


Let $v, k, \lambda$ be integers such that $0<\lambda < k < v$. A \emph{$(v,k,\lambda)$-difference set} in a group $X$ of order $v$ is a subset $D\subset X$ of size $k$ such that the list of `differences' $x^{-1}y$, for distinct $x, y\in D$, contains each non-identity element of $G$ exactly $\lambda$ times. The \emph{development of $D$} is the pair $(X, \Bmc)$ where $\Bmc=\{ Dx\mid x\in X\}$. By
\cite[Theorem 4.1]{b:lander}, the development of $D$ is a symmetric design with parameters  $(v,k,\lambda)$ and the group $X$ acts by right multiplication as a point-regular subgroup of automorphisms. Thus a $(v,k,2)$-difference set gives rise, via its development, to a transitive biplane with parameters $(v,k,\lambda)$.  

\begin{proposition}\label{cor:p2}
Let $\Dmc=(\Pmc,\Bmc)$ be a  biplane with parameters $(p^2,k,2)$ where $k\geq4$ and $p$ is a prime. Then $p\geq 11$ and if $x\in \Aut(\Dmc)$ has order $p$, then $\fix_{\Pmc}(x)=\emptyset$. Moreover, if $G\leq \Aut(\Dmc)$ is transitive on $\Pmc$, then the Sylow $p$-subgroups of $G$ are regular and abelian of order $p^{2}$, and $\Dmc$ is the development of a difference set with parameters $(p^2,k,2)$. 
\end{proposition}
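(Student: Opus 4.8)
The plan is to establish the three assertions in turn, using throughout the numerical identity \eqref{eq:basic}, namely $k(k-1)=2(p^{2}-1)$, together with the fixed-point estimates of Section~\ref{sec:fix}. For the bound $p\geq 11$, I would apply Lemma~\ref{lem:basic} with $v=c^{d}=p^{2}$, so that $c=p$ and $d=2$; this immediately gives $k=\lceil\sqrt{2}\,p\rceil$ and the restriction $c=p\geq 4$, hence $p\geq 5$. To eliminate $p=5$ and $p=7$ it then suffices to note that the forced value $k=\lceil\sqrt{2}\,p\rceil$ fails \eqref{eq:basic}: for $p=5$ one has $k=8$ but $k(k-1)=56\neq 48$, and for $p=7$ one has $k=10$ but $k(k-1)=90\neq 96$. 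Hence no biplane with these parameters exists unless $p\geq 11$.

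For the fixed-point-freeness of order-$p$ elements, suppose $x\in\Aut(\Dmc)$ has order $p$ and put $f=|\fix_{\Pmc}(x)|$. Since $x$ acts on the $v=p^{2}$ points with every nontrivial cycle of length $p$, counting points gives $p\mid(p^{2}-f)$, so $p\mid f$. On the other hand $o(x)=p$ is odd (as $p\geq 11$), so Corollary~\ref{cor:fix-bi-bound}(b) applies and gives $f\leq (k+\sqrt{k-2})/2$. Inserting $k<\sqrt{2}\,p+1$, a short estimate shows $(k+\sqrt{k-2})/2<p$ for every $p\geq 11$, so $f<p$. Together with $p\mid f$ this forces $f=0$, that is, $\fix_{\Pmc}(x)=\emptyset$.

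For the final assertion, assume $G\leq\Aut(\Dmc)$ is transitive on $\Pmc$, so $p^{2}\mid|G|$ and a Sylow $p$-subgroup $P$ has order $p^{a}$ with $a\geq 2$. I would first show $P$ acts semiregularly on $\Pmc$: if some nontrivial $y\in P$ of order $p^{m}$ fixed a point $\alpha$, then the power $y^{p^{m-1}}$ would have order $p$ and also fix $\alpha$, contradicting the previous assertion. Semiregularity means every $P$-orbit has length $|P|$, so $|P|$ divides $v=p^{2}$; combined with $a\geq 2$ this gives $|P|=p^{2}$ and a single orbit, i.e. $P$ is regular, and a group of order $p^{2}$ is abelian. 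Finally, since $P$ is regular on points it is also regular on blocks (point- and block-transitivity coincide for symmetric designs, and $|P|=v$ equals the number of blocks), and choosing a base block identifies $\Dmc$ with the development of a $(p^{2},k,2)$-difference set in $P$, which is the converse of the correspondence recalled before the statement.

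The step I expect to be most delicate is the quantitative inequality in the second assertion: it is essential to use the sharper odd-order bound $f\leq (k+\sqrt{k-2})/2$ of Corollary~\ref{cor:fix-bi-bound}(b) rather than the general estimate $f\leq k+\sqrt{k-2}$, which already exceeds $p$ when $p=11$ and $k=16$. One then has to confirm that $(k+\sqrt{k-2})/2<p$ holds uniformly for $p\geq 11$; this can be done by verifying the inequality at $p=11$ and observing that, as a function of $p$, the left-hand side grows only like $\sqrt{p}$ against the linear right-hand side.
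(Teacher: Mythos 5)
Your proposal is correct, but it takes a genuinely different route from the paper's in two of the three parts. For $p\geq 11$, the paper simply cites the classification summarised in Table~\ref{tbl:ex}, whereas you derive it arithmetically from Lemma~\ref{lem:basic}: the forced value $k=\lceil\sqrt{2}\,p\rceil$ is incompatible with \eqref{eq:basic} for $p=5,7$, and $c\geq 4$ rules out $p\leq 3$. Your version is self-contained and does not lean on the exhaustive computer searches behind the table. For the fixed-point-freeness of order-$p$ elements, the paper argues combinatorially: $k$ is coprime to $p$, so $\sbf_B\equiv k\not\equiv 0\pmod p$ forces $\sbf_B>0$; then $\sbf_B<k$ (else $f=v$), so $k-\sbf_B\geq p$ and $f\geq p$, and the disjointness result Lemma~\ref{lem:fix-v} yields $p^2=v\geq f(k-\sbf_B+1)\geq p(p+1)$, a contradiction. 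You instead combine the divisibility $p\mid f$ with the numerical bound $f\leq (k+\sqrt{k-2})/2$ of Corollary~\ref{cor:fix-bi-bound}(b) and show this is less than $p$; this is shorter and avoids Lemma~\ref{lem:fix-v} entirely, at the price of a uniform inequality in $p$. (Implicitly you use Lemma~\ref{lem:fix} to get a fixed block when $f>0$, which is where the corollary's hypothesis comes from; worth saying explicitly.) The transitive part of your argument is essentially the paper's, which cites \cite[Theorem~4.2]{b:lander} for the passage from a point-regular subgroup to a difference set.

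One repair is needed in your final paragraph. With $k=\lceil\sqrt{2}\,p\rceil$, the quantity $(k+\sqrt{k-2})/2$ does \emph{not} grow like $\sqrt{p}$: its dominant term is $k/2\approx(\sqrt{2}/2)\,p$, which is linear. The inequality $(k+\sqrt{k-2})/2<p$ nevertheless holds for all $p\geq 11$ because the leading coefficient $\sqrt{2}/2$ is less than $1$: since $k<\sqrt{2}\,p+1$, it suffices to check
\begin{align*}
\sqrt{\sqrt{2}\,p-1}\;<\;(2-\sqrt{2})\,p-1,
\end{align*}
which is true at $p=11$ (approximately $3.82<5.44$) and for all larger $p$, as the right-hand side is linear in $p$ while the left-hand side grows only like $\sqrt{p}$. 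With this corrected justification your argument goes through in full.
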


\begin{proof}
 Note that $p\geq 11$ by Table~\ref{tbl:ex}. By  \eqref{eq:basic}, $k$ divides $2(v-1)=2(p^{2}-1)$, and hence $k$ is coprime to $p$.
Suppose that $x\in \Aut(\Dmc)$ has order $p$ and $\alpha\in\fix_{\Pmc}(x)$. Then by Lemma~\ref{lem:fix}, $x$ fixes a block, say $B$.
Let $\sbf_{B}=|\fix_{[B]}(x)|$ and $f=|\fix_{\Bmc}(x)|$. Then $p$ divides $k-\sbf_{B}$ and $p$ divides $f$, and since  $k$ is coprime to $p$ it follows that $\sbf_{B}>0$. Since $p$ is odd, it follows from Lemma~\ref{lem:fix-bi}(b) that
\begin{align}\label{eq:p2}
    2f=\sbf_{B}(\sbf_{B}-1)+2.
\end{align}
If  $\sbf_{B}=k$, then \eqref{eq:p2} implies that $f=(k^2-k+2)/2=v$, which is impossible. Hence $\sbf_B<k$. Thus $k-\sbf_{B}\geq p$ and $f\geq p$ (since $p$ divides both of these quantities), and hence by Lemma~\ref{lem:fix-v},  $p^{2}=v\geq f(k-\sbf_{B}+1)\geq p(p+1)$, which is a contradiction. Hence $\fix_{\Pmc}(x)=\emptyset$.

Now suppose that  $G\leq \Aut(\Dmc)$ is transitive on $\Pmc$, and let $P$ be a Sylow $p$-subgroup of $G$, so $|P|=p^a\geq p^2$. It follows from the previous paragraph that $|P|=p^2$ and that $P$ is regular of $\Pmc$. Thus $P$ is abelian of order $p^2$, and as discussed above,
by \cite[Theorem~4.2]{b:lander}, $\Dmc$ is the development of a difference set in $P$ with parameters $(p^2,k,2)$.
\end{proof}

By using the following result (Lemma~\ref{lem:mann}) of Lander,  we apply Proposition~\ref{cor:p2} to show that no transitive biplanes with parameters $(121,16,2)$ exist. 

\begin{lemma}\label{lem:mann} \cite[Theorem~4.4]{b:lander}
Suppose that there exists a $(v,k,\lambda)$-difference set $\Dmc$ in a group $G$. If for some divisor $p>1$ of $v$ and some prime $q$, there exists an integer $j$
such that $q^{j} \equiv -1 \mod p$, then $q$ does not divide the square-free part of $k-\lambda$.

\end{lemma}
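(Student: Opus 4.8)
The plan is to reduce the statement to the defining identity of a difference set in the integral group ring, and then to a divisibility argument in a cyclotomic field. I will treat the case needed here, where $G$ is abelian (in the application $G$ is an abelian Sylow $p$-subgroup of order $p^{2}$). Write $n=k-\lambda$ and, abusing notation, regard $D=\sum_{d\in D}d$ and $G=\sum_{g\in G}g$ as elements of the group ring $\Zbb[G]$, and set $D^{(-1)}=\sum_{d\in D}d^{-1}$. The condition that $D$ is a $(v,k,\lambda)$-difference set is exactly the identity $D\,D^{(-1)}=n\cdot 1+\lambda\,G$ in $\Zbb[G]$, since the coefficient of $1$ is $k$ and of each nonidentity $g$ is $\lambda$. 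First I would fix a complex character $\chi$ of $G$ of order exactly $p$, which exists because $p\mid v=|G|$ and $G$ is abelian. Applying the ring homomorphism $\chi$ to the identity, using $\chi(G)=0$ (as $\chi$ is nontrivial) and $\chi(D^{(-1)})=\overline{\chi(D)}$ (as each $\chi(d)$ is a root of unity), yields $\chi(D)\overline{\chi(D)}=n$.

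The next step is number-theoretic. Put $\alpha=\chi(D)\in\Zbb[\zeta_{p}]$, where $\zeta_{p}$ is a primitive $p$-th root of unity, so that $\alpha\overline{\alpha}=n$. The hypothesis that $q^{j}\equiv -1\pmod{p}$ for some $j$ says precisely that $-1$ lies in the cyclic subgroup $\langle q\rangle$ of $(\Zbb/p\Zbb)^{\times}\cong\mathrm{Gal}(\mathbb{Q}(\zeta_{p})/\mathbb{Q})$; equivalently, complex conjugation lies in the decomposition group of $q$. I would use this to conclude that every prime ideal $\mathfrak{q}$ of $\Zbb[\zeta_{p}]$ lying above $q$ is stable under complex conjugation, i.e.\ $\overline{\mathfrak{q}}=\mathfrak{q}$. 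Note also that $q\neq p$ (otherwise $q^{j}\equiv 0\not\equiv -1$), so $q$ is unramified in $\mathbb{Q}(\zeta_{p})$.

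Finally I would compare valuations. For a prime $\mathfrak{q}$ above $q$, the stability $\overline{\mathfrak{q}}=\mathfrak{q}$ gives $v_{\mathfrak{q}}(\overline{\alpha})=v_{\overline{\mathfrak{q}}}(\alpha)=v_{\mathfrak{q}}(\alpha)$, whence
\[
v_{\mathfrak{q}}(n)=v_{\mathfrak{q}}(\alpha)+v_{\mathfrak{q}}(\overline{\alpha})=2\,v_{\mathfrak{q}}(\alpha)
\]
is even; and since $q$ is unramified, $v_{\mathfrak{q}}(n)=v_{q}(n)$. Hence $v_{q}(n)$ is even, which is exactly the assertion that $q$ does not divide the square-free part of $n=k-\lambda$.

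I expect two delicate points. The first, routine but needing care, is the passage from the congruence hypothesis to $\overline{\mathfrak{q}}=\mathfrak{q}$, via the identification of the decomposition group of the unramified prime $q$ with $\langle q\rangle\leq\mathrm{Gal}(\mathbb{Q}(\zeta_{p})/\mathbb{Q})$ and the fact that complex conjugation corresponds to $-1$. The second, and the main obstacle, is removing the abelian hypothesis: for general $G$ one must replace the one-dimensional $\chi$ by an irreducible representation $\rho$ affording $p$-th roots of unity, apply $\rho$ to the group-ring identity to get $\rho(D)\rho(D)^{\ast}=nI$, and run the conjugation/valuation argument on $\det\rho(D)$; arranging that the conclusion concerns $v_{q}(n)$ rather than $v_{q}(n^{\deg\rho})$ is the technical hurdle. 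Since only the abelian case is required for Proposition~\ref{cor:p2}, I would present the argument above and invoke the representation-theoretic version only if full generality were needed.
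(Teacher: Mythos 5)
The paper offers no proof of this lemma at all: it is quoted directly from Lander \cite[Theorem~4.4]{b:lander}, and the proof in that reference is Mann's classical ``self-conjugacy'' argument. Your proposal reconstructs essentially that same argument: the group-ring identity $DD^{(-1)}=n\cdot 1+\lambda G$, evaluation at a nontrivial character of order $p$, and the observation that $q^{j}\equiv-1\pmod{p}$ places complex conjugation in the decomposition group $\langle\mathrm{Frob}_q\rangle$ of the (unramified, since $q\neq p$) prime $q$ in $\mathbb{Q}(\zeta_p)$, so that every prime $\mathfrak{q}$ above $q$ satisfies $\overline{\mathfrak{q}}=\mathfrak{q}$ and the $q$-adic valuation of $n=\alpha\overline{\alpha}$ is forced to be even. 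The number-theoretic core of your write-up is correct.

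Two points need tightening. First, your existence claim for the character is false as stated when $p$ is composite: an abelian group $G$ has a character of order exactly $p$ if and only if $p$ divides the \emph{exponent} of $G$, not merely $|G|$ (take $G=\C_{2}\times\C_{2}$ and $p=4$). The repair is easy and should be made explicit: the hypotheses descend to any prime divisor $p'$ of $p$ (since $p'\mid p\mid q^{j}+1$ and $p'\mid v$), so one may assume $p$ is prime, and then Cauchy's theorem applied to $\widehat{G}\cong G$ gives the character. Second, you prove the lemma only for abelian $G$, whereas the statement in the paper is for an arbitrary group. Your restriction is in fact the honest scope of the result: Mann's theorem, and the character-theoretic proof in Lander's book, concern abelian difference sets, and as you yourself note, the linear-character argument cannot work in general (a perfect group has no nontrivial linear characters, and the determinant variant only shows that $(\deg\rho)\cdot v_{q}(n)$ is even, which is vacuous when $\deg\rho$ is even). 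Since the paper applies the lemma only to a difference set in an abelian group of order $p^{2}$ (Proposition~\ref{cor:p2} feeding into Corollary~\ref{cor:121-t}), nothing needed in the paper is lost; but you should either add the abelian hypothesis to the statement you prove, or flag that the paper's phrasing ``in a group $G$'' is broader than what your argument (or the cited theorem) establishes.
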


\begin{corollary}\label{cor:121-t}
There exist no transitive biplanes with parameters   $(121, 16, 2)$.
\end{corollary}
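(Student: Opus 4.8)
The plan is to argue by contradiction, combining Proposition~\ref{cor:p2} with Lander's number-theoretic obstruction in Lemma~\ref{lem:mann}. Suppose for contradiction that $\Dmc=(\Pmc,\Bmc)$ is a transitive biplane with parameters $(121,16,2)$, and let $G\leq \Aut(\Dmc)$ be transitive on $\Pmc$. The key structural observation is that $v=121=11^{2}$ is the square of the prime $p=11$, so the parameters fit the hypotheses of Proposition~\ref{cor:p2} with $p=11$ and $k=16\geq 4$. Applying that proposition, I would conclude that $\Dmc$ is the development of a $(121,16,2)$-difference set in an abelian group of order $121$.

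Next I would invoke Lemma~\ref{lem:mann} to rule out the existence of such a difference set. Here $k-\lambda=16-2=14=2\cdot 7$, which is itself square-free, so its square-free part is $14$ and is divisible by the prime $q=2$. To obtain the desired contradiction I would take the divisor $p=11$ of $v=121$ and the prime $q=2$, and exhibit an integer $j$ with $q^{j}\equiv -1\pmod{p}$. The required verification is the short computation $2^{5}=32\equiv 10\equiv -1\pmod{11}$, so $j=5$ works. (As a cross-check, $q=7$ also satisfies $7^{5}\equiv -1\pmod{11}$, giving a second route.) Lemma~\ref{lem:mann} then asserts that $q=2$ cannot divide the square-free part of $k-\lambda=14$, directly contradicting $2\mid 14$.

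The contradiction shows that no $(121,16,2)$-difference set exists, and hence, by the structural conclusion of Proposition~\ref{cor:p2}, no transitive biplane with these parameters can exist. I do not anticipate a genuine obstacle here: the argument is essentially a two-line reduction followed by a modular arithmetic check. The only point that warrants care is confirming that the hypotheses of both results are met exactly—namely that $v$ is a prime square so that Proposition~\ref{cor:p2} forces the difference-set structure on an abelian $p$-group, and that the chosen pair $(p,q)=(11,2)$ legitimately satisfies the congruence $q^{j}\equiv -1\pmod p$ while $q$ divides the (in this case full) square-free part of $k-\lambda$.
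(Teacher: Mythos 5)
Your proof is correct and is essentially identical to the paper's: both invoke Proposition~\ref{cor:p2} to reduce to a $(121,16,2)$-difference set, then apply Lemma~\ref{lem:mann} with $(p,q,j,k-\lambda)=(11,2,5,14)$ using $2^{5}\equiv-1\pmod{11}$ to contradict $2\mid 14$. No gaps; the argument matches the paper's in every step.
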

\begin{proof}
If $\Dmc$ were such a biplane, then by Proposition \ref{cor:p2}, $\Dmc$ would be 
the development of a difference set with parameters $(121, 16, 2)$. Since $2^{5}\equiv -1 \mod{11}$, we may apply Lemma \ref{lem:mann} with $(p, q, j, k-\lambda) = (11, 2, 5, 14)$ and  conclude that $2$ does not divide the square-free part of $14=2\cdot 7$, which is a contradiction.
\end{proof}

\section{Biplanes with parameters $(79,13,2)$}\label{sec:79}

As mentioned in Section~\ref{sec:examples},  the  only known biplanes with parameters  $(79,13,2)$ are the Aschbacher biplane and its dual. Moreover if $\Dmc$ is any other biplane with parameters $(79, 13, 2)$, then it follows from \cite[Theorem 2]{a:Marangunic-92} that the full automorphism group of $\Dmc$ is a $3$-group. We seek to restrict $\Aut(\Dmc)$ further, to having order either 1 or 3.

\begin{theorem}\label{thm:79}
If $\Dmc$ is a biplane with parameters $(79,13,2)$, then either $\Dmc$ or its dual is the Aschbacher biplane with $|\Aut(\Dmc)|=110$,
or  $\Aut(\Dmc)\leq \C_{3}$.
\end{theorem}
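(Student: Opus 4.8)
The plan is to invoke Maranguni\'c's theorem (cited above) to reduce at once to the case where $G:=\Aut(\Dmc)$ is a nontrivial $3$-group, and then to prove that $|G|$ divides $3$. So suppose $G$ is a $3$-group with $G\neq 1$; the heart of the argument is to show that \emph{every} non-identity element of $G$ fixes exactly one point of $\Pmc$.

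First I would observe that any non-identity $x\in G$ fixes a point: since $\langle x\rangle$ is a cyclic $3$-group, every $\langle x\rangle$-orbit on $\Pmc$ has length a power of $3$, so $|\fix_\Pmc(x)|\equiv|\Pmc|=79\equiv 1\pmod 3$, whence $|\fix_\Pmc(x)|\geq 1$. By Lemma~\ref{lem:fix}, $x$ then also fixes a block, say $B$, and the same congruence argument applied to the $13$ points incident with $B$ gives $\sbf_B=|\fix_{[B]}(x)|\equiv 13\equiv 1\pmod 3$. Since $o(x)$ is odd and $k-2=11$ is not a perfect square, Corollary~\ref{cor:fix-bi-bound}(b) yields $|\fix_\Pmc(x)|=\sbf_B(\sbf_B-1)/2+1\leq k/2=6.5$, hence $\leq 6$, which forces $\sbf_B\leq 3$. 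Combined with $\sbf_B\equiv 1\pmod 3$ this pins down $\sbf_B=1$, and therefore $|\fix_\Pmc(x)|=1$.

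With this in hand I would finish by a semiregularity argument. Choose a central element $z$ of order $3$ in $G$ (which exists as $\Zr(G)\neq 1$). Then $\fix_\Pmc(z)=\{\alpha_0\}$ is a single point, and since $z$ is central this set is $G$-invariant, so $G$ fixes $\alpha_0$. As every non-identity element of $G$ thus fixes $\alpha_0$ yet has a unique fixed point, $G$ acts semiregularly on $\Pmc\setminus\{\alpha_0\}$, a set of $78$ points. Hence each $G$-orbit there is regular of length $|G|$, so $|G|$ divides $78=2\cdot 3\cdot 13$; being a power of $3$, $|G|\in\{1,3\}$, i.e.\ $G\leq\C_3$, as required.

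The crux is the second paragraph: the interplay between the exact fixed-point formula of Corollary~\ref{cor:fix-bi-bound}(b), the bound $k/2$ (available precisely because $k-2=11$ is a non-square for these parameters), and the two mod-$3$ congruences forces $\sbf_B=1$. I expect this to be the only delicate step; everything afterwards is a routine counting deduction, the sole point needing care being the verification that the unique fixed point of the central element $z$ is fixed by all of $G$, which is immediate from centrality.
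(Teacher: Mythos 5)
Your proposal is correct and takes essentially the same route as the paper: reduce via Maranguni\'c to a nontrivial $3$-group, then use the mod-$3$ congruences on $\Pmc$ and on $[B]$ together with the fixed-point bounds (the paper invokes Lemma~\ref{lem:fix-bi}(b),(d) directly, you invoke the derived Corollary~\ref{cor:fix-bi-bound}(b), which is the same content since $k-2=11$ is a non-square) to force $\sbf_B=1$ and hence a unique fixed point for every non-identity element. Your endgame (a central element of order $3$ whose unique fixed point is $G$-invariant, followed by semiregularity on the remaining $78$ points, so $|G|$ divides $78$) is a valid minor variant of the paper's finish, which instead finds a fixed point by orbit-counting, extracts a $G$-orbit of length $3$, and shows the stabiliser of a point in it is trivial.
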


\begin{proof}
Let $G=\Aut(\Dmc)$ and suppose that neither $\Dmc$ nor its dual is the Aschbacher biplane, so that, by
\cite[Theorem 2]{a:Marangunic-92}, $G$ is a $3$-group. Suppose that $1\ne x\in G$.  Since $o(x)$ is a $3$-power, it follows that
$f=|\fix_{\Pmc}(x)|\equiv 1\pmod{3}$, so $f>0$ and hence also $f= |\fix_{\Bmc}(x)|>0$ by Lemma~\ref{lem:fix}. Let $\alpha\in\fix_\Pmc(x)$, $B\in\fix_\Bmc(x)$,  and $s:=\sbf_{B} = |\fix_{[B]}(x)|$. Then $s\equiv k\equiv 1 \mod 3$, so also $s>0$. By Lemma~\ref{lem:fix-bi}(d) (noting that $\rbf_B=0$ since $o(x)$ is odd and that $k-2=11$ is not a square), it follows that $s(s-1)\leq 11$; and then by
Lemma~\ref{lem:fix-bi}(b), $f=s(s-1)/2 +1$, so $f\leq 6$. Since $f\equiv s\equiv 1\pmod{3}$, it follows that $f=s=1$.
Therefore, every nontrivial element $x$ of $G$ fixes exactly one point $\alpha$ and exactly one block $B$, and $\alpha\in B$.

Now the group $G$ is a nontrivial $3$-group and as $v=79$, it follows that $G$ fixes some point, say $\alpha\in\Pmc$. Thus $G=G_\alpha$ and $\alpha$ is the unique fixed point of each nontrivial element of $G$. Since $|\Pmc\setminus\{\alpha\}|=78\equiv 6\pmod{9}$ it follows that $G$ has an orbit $\Delta$ of length  $3$ in $\Pmc$. Let $\beta\in\Delta$. Then $G_\beta$ fixes $\{\alpha\}\cup\Delta$ pointwise, and  hence by the previous paragraph  $G_\beta=1$. Thus $|G|=3|G_\beta|=3$.
\end{proof}

\section{Biplanes with parameters $(121,16,2)$}\label{sec:121}

Let $\Dmc=(\Pmc, \Bmc)$ be a biplane with parameters $(121,16,2)$ and let $G=\Aut(\Dmc)$. We use the notation and results from  Section~\ref{sec:fix}. In this section, we prove Theorem~\ref{thm:main-121} for $k=121$.

\subsection{$2$-elements in $G$}
Here we bound the $2$-part of $|\Aut(\Dmc)|$.

\begin{proposition}\label{prop:2bound}
A Sylow $2$-subgroup of $\Aut(\Dmc)$ has order at most $2^7$.
\end{proposition}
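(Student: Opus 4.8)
The plan is to reduce the problem to a question about $2$-groups acting on the $16$ blocks through a fixed point, and then to exploit the fixed-point lemmas of Section~\ref{sec:fix}. Since $v=121$ is odd and $P$ is a Sylow $2$-subgroup of $G=\Aut(\Dmc)$, the group $P$ fixes at least one point $\alpha\in\Pmc$; because $P\leq G_\alpha\leq G$ forces $|P|=|G|_2=|G_\alpha|_2$, it suffices to bound $|G_\alpha|_2$. The key structural observation is that, since $\lambda=2$, any two of the $k=16$ blocks through $\alpha$ meet in $\alpha$ and exactly one further point; this sets up a bijection between $\Pmc\setminus\{\alpha\}$ and the set $\binom{[\alpha]}{2}$ of $2$-subsets of $[\alpha]$, under which the action of $G_\alpha$ on $\Pmc\setminus\{\alpha\}$ is the induced action on $2$-subsets of $[\alpha]$. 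As $G_\alpha$ acts faithfully on $\Pmc$, it acts faithfully on the $16$ blocks in $[\alpha]$, giving an embedding $P\hookrightarrow\Sym([\alpha])\cong\S_{16}$.

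The next step is to extract the crucial constraint on involutions. If $t\in P$ is an involution fixing $a$ blocks of $[\alpha]$ and interchanging the remaining $16-a$ in $b=(16-a)/2$ transpositions, then under the bijection above $t$ fixes exactly $1+\binom{a}{2}+b$ points of $\Pmc$. Since $k-2=14$ is not a square, Corollary~\ref{cor:fix-bi-bound}(a) forces $|\fix_\Pmc(t)|\leq k-2=14$, and a short computation gives $a\leq 4$. Thus every involution of $P$ fixes at most $4$ of the $16$ blocks, equivalently moves at least $12$ of them. So the problem becomes: bound the order of a $2$-subgroup of $\S_{16}$ in which every involution has at most $4$ fixed points.

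To do this I would work from the centre. Choose a central involution $z\in Z(P)$; it fixes a $P$-invariant set $\Phi$ of $a\leq 4$ blocks and partitions the remaining $16-a$ blocks into $z$-orbits of size $2$, a pairing $\Sigma$ that $P$ preserves. Let $K$ be the kernel of the $P$-action on $\Phi\cup\Sigma$. Then $K$ swaps blocks only within the $z$-pairs, so $K\leq\C_2^{\,m}$ with $m=|\Sigma|=(16-a)/2$; but a nontrivial $k\in K$ swapping $w$ pairs fixes $16-2w$ blocks, whence the involution constraint forces $w\geq 6$, while $kz$ swaps $m-w\leq 2$ pairs and so must be trivial. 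Hence $K=\langle z\rangle$, and $P/\langle z\rangle$ embeds into $\Sym(\Phi)\times\Sym(\Sigma)$. Taking Sylow $2$-subgroups yields $|P|\leq 2\cdot 2^{5}=2^{6}$ when $a=2$ (already sufficient), whereas $a=0$ and $a=4$ only give the weaker estimate $|P|\leq 2\cdot 2^{7}=2^{8}$.

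The main obstacle is therefore closing the gap to the sharp value $2^7$ in the cases $a\in\{0,4\}$, and this requires more than the involution count. I would bring in the elements of order $4$: if $x$ has order $4$, then $x^2$ is an involution fixing at most $4$ blocks, so writing the cycle type of $x$ on $[\alpha]$ as $c_1$ fixed points, $c_2$ transpositions and $c_4$ four-cycles, one has $c_1+2c_2=|\fix_{[\alpha]}(x^2)|\leq 4$ and hence $c_4\geq 3$. Combined with the earlier remark that elementary abelian subgroups have rank at most $4$ (which rules out $P$ being abelian once $|P|>2^4$), this forces $P$ to contain such order-$4$ elements, and the constraint ``at least three $4$-cycles'' severely restricts how the image of $P$ can sit inside $\Sym(\Phi)\times\Sym(\Sigma)$, showing it is a proper subgroup of the full Sylow $2$-subgroup. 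Carrying out this case analysis — tracking how the admissible involutions and order-$4$ elements can lie inside the relevant iterated wreath products for $|\Phi|\in\{0,4\}$ — is where the real work lies, and it is what pins the bound down to $|P|\leq 2^7$.
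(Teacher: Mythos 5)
Your argument, as written, proves only $|P|\leq 2^8$, not the stated bound. The setup is fine: the bijection $\Pmc\setminus\{\alpha\}\leftrightarrow$ the $2$-subsets of $[\alpha]$, the faithful embedding $P\hookrightarrow\Sym([\alpha])\cong\S_{16}$, the bound $\sbf_\alpha\leq 4$ for involutions via Corollary~\ref{cor:fix-bi-bound}(a), and the determination $K=\langle z\rangle$ with $P/\langle z\rangle\hookrightarrow\Sym(\Phi)\times\Sym(\Sigma)$ are all correct (and the case $|\Phi|=2$ does give $2^6$). But for $|\Phi|\in\{0,4\}$ you reach only $2^8$, and the descent to $2^7$ --- which is the entire content of the proposition --- is deferred to a case analysis you never carry out. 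Moreover, the ``extra'' ingredient you propose to close the gap is not extra: the statement that an order-$4$ element has $c_1+2c_2\leq 4$, hence at least three $4$-cycles, is a formal consequence of the involution bound applied to $x^2$, which you have already used; so the assertion that the image of $P$ in $\Sym(\Phi)\times\Sym(\Sigma)$ is ``severely restricted'' rests on no new constraint and is nowhere verified. This is a genuine gap, not a routine omission.

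What is actually needed at that point is a lifting argument. For instance, when $\Phi=\emptyset$, equality $|P|=2^8$ would force $P/\langle z\rangle$ to be a full Sylow $2$-subgroup of $\Sym(\Sigma)\cong\S_8$, hence to contain a transposition $\bar t$ of two pairs; any preimage $t\in P$ satisfies $t^2\in K=\langle z\rangle$, and since $t^2$ acts trivially inside the six pairs fixed by $\bar t$ while $z$ does not, necessarily $t^2=1$; then $t$ and $tz$ are involutions whose fixed-point counts sum to $12$, so one of them fixes at least $6>4$ blocks, a contradiction. A parallel argument (an element acting as a transposition on $\Phi$ and trivially on $\Sigma$) disposes of $|\Phi|=4$. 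So your strategy is salvageable, but these arguments \emph{are} the proof, and they are absent. The paper's proof avoids this centre-based case split entirely: it lets $P$ act faithfully on the $16$ points of a fixed block, invokes Lemmas~\ref{lem:2-1}, \ref{lem:2-2} and \ref{lem:2-3} to know that \emph{every} nontrivial element of $P$ fixes at most four of those points, and then runs a short orbit--stabiliser chain: with $2^a\leq 16$ the shortest nontrivial $P$-orbit length and $2^b$ the shortest nontrivial $P_\alpha$-orbit length, parity of fixed-point sets of $2$-groups on a $16$-point set forces either $P_{\alpha,\beta}=1$ with $2^{a+b}\leq 2^7$, or $|\fix_{[B]}(P_{\alpha,\beta})|=4$, $2^b=2$, and $P_{\alpha,\beta}$ semiregular of order at most $4$ on the remaining $12$ points, whence $|P|\leq 2^{a+1}\cdot 4\leq 2^7$ directly.
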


First we restrict in Lemmas~\ref{lem:2-1}, \ref{lem:2-2} and \ref{lem:2-3} the possible cycle types of $2$-elements.

\begin{lemma}\label{lem:2-1}
Let $x\in\Aut(\Dmc)$ with $o(x)=2$, let $f:=|\fix_\Pmc(x)|$, and let $\sbf_\alpha=|\fix_{[\alpha]}(x)|$ for each $\alpha\in\fix_\Pmc(x)$. Then
\begin{enumerate}[{\rm (a)}]
\item $f>0$ and either $(f, \sbf_\alpha)=(13,4)$ for each $\alpha\in\fix_\Pmc(x)$, or $f=9$ and $\sbf_\alpha\in\{0,2\}$  for each $\alpha\in\fix_\Pmc(x)$;
\item $f =|\fix_\Bmc(x)|$ and setting $\sbf_B = |\fix_{[B]}(x)|$ for $B\in\fix_\Bmc(x)$, either $f=13$ and $\sbf_B=4$ for each $B\in\fix_\Bmc(x)$, or $f=9$ and $\sbf_B\in\{0,2\}$  for each $B\in\fix_\Bmc(x)$.
\end{enumerate}
\end{lemma}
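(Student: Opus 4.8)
The plan is to specialise the general results on involutions of biplanes (principally Lemma~\ref{lem:bi-2}) to the parameters $(v,k,2)=(121,16,2)$ and to read off the admissible pairs $(\sbf_\alpha,f)$ by a short arithmetic argument. The two features of these parameters that drive everything are that $v=121$ is odd and that $k-2=14$ is not a perfect square.

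First I would establish $f>0$, which is needed before Lemma~\ref{lem:bi-2} can be applied. Since $x$ has order $2$, every $\langle x\rangle$-orbit on $\Pmc$ has length $1$ or $2$, so $v-f$ is even; as $v=121$ is odd, $f$ is odd and hence $f\geq1$. For part (a) I would then invoke Lemma~\ref{lem:bi-2}(c): because $k-2=14$ is not a square, the alternative $(\sbf_\alpha-2)^2=k-2$ is impossible, forcing $(\sbf_\alpha-1)^2\leq k-5=11$ and hence $\sbf_\alpha\in\{0,1,2,3,4\}$. Next, Lemma~\ref{lem:bi-2}(a) supplies the exact count
\[
f=\frac{k+1+(\sbf_\alpha-1)^2}{2}=\frac{17+(\sbf_\alpha-1)^2}{2}.
\]
Since $f$ is an integer, $(\sbf_\alpha-1)^2$ must be odd, so $\sbf_\alpha$ is even; thus $\sbf_\alpha\in\{0,2,4\}$, yielding $f=9$, $f=9$ and $f=13$ respectively. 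As $f$ is a single number independent of $\alpha$, the value $\sbf_\alpha=4$ (equivalently $f=13$) cannot coexist with $\sbf_\alpha\in\{0,2\}$; hence either $\sbf_\alpha=4$ for every $\alpha\in\fix_\Pmc(x)$ with $f=13$, or $\sbf_\alpha\in\{0,2\}$ for every $\alpha$ with $f=9$, which is precisely the dichotomy of part (a).

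For part (b), the equality $f=|\fix_\Bmc(x)|$ is immediate from Lemma~\ref{lem:fix}. The cleanest route to the statement about $\sbf_B$ is duality: the dual $(\Bmc,\Pmc)$ is again a biplane with parameters $(121,16,2)$ and has the same automorphism group, and under this identification a fixed block of $\Dmc$ is a fixed point of the dual and $\sbf_B=|\fix_{[B]}(x)|$ plays exactly the role of $\sbf_\alpha$. Applying part (a) to the dual then yields at once the two alternatives $f=13$ with $\sbf_B=4$ for all $B$, or $f=9$ with $\sbf_B\in\{0,2\}$ for all $B$. (One could instead argue directly from Lemma~\ref{lem:bi-2}(b) together with a block-side count, but the duality argument avoids re-deriving the formula for $f$ in terms of $\sbf_B$.)

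The argument is essentially a finite arithmetic check, so I do not anticipate a genuine obstacle; the one point requiring care is the integrality constraint on $f$, which is what eliminates the odd values $\sbf_\alpha\in\{1,3\}$ and collapses the seven a priori possibilities to the two stated cases. One should also keep in mind that Lemma~\ref{lem:bi-2}(a) offers two structural alternatives for $\sbf_\alpha$ (a constant value, or values ranging over $\{0,2\}$); both are compatible with the conclusion, since in either situation the displayed formula holds and correctly pins down $f\in\{9,13\}$.
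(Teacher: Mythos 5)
Your proof is correct. Part (a) coincides with the paper's own argument: $f>0$ from the oddness of $v=121$, Lemma~\ref{lem:bi-2}(c) with $k-2=14$ not a square to force $(\sbf_\alpha-1)^2\leq 11$, and then the formula of Lemma~\ref{lem:bi-2}(a) together with integrality of $f$ to collapse the possibilities to $(f,\sbf_\alpha)=(13,4)$ or $f=9$ with $\sbf_\alpha\in\{0,2\}$. For part (b), however, you take a genuinely different route. The paper works on the block side directly: it invokes Lemma~\ref{lem:fix-bi}(b) to write $f=\sbf_B(\sbf_B-1)/2+\rbf_B+1$, observes that $\sbf_B$ is even, proves the auxiliary claim that $\sbf_\alpha\geq\sbf_B$ whenever $\alpha\in\fix_{[B]}(x)$ (because each fixed point $\beta\ne\alpha$ of $[B]$ forces the second block through $\{\alpha,\beta\}$ to be fixed as well), and then splits into the cases $f=13$ and $f=9$ to pin down $\sbf_B$. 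Your duality argument bypasses all of this: the dual $(\Bmc,\Pmc)$ is again a $(121,16,2)$ biplane on which $x$ acts as an involution (it cannot act trivially on $\Bmc$, else by Lemma~\ref{lem:fix} it would fix all $121$ points), and an $x$-fixed dual block $[\alpha]$ through the dual point $B$ corresponds exactly to an $x$-fixed point $\alpha\in[B]$, so $\sbf_B$ is indeed the dual analogue of $\sbf_\alpha$; hence part (a) applied to the dual yields the dichotomy for $\bigl(|\fix_\Bmc(x)|,\sbf_B\bigr)$ at once, and Lemma~\ref{lem:fix} identifies $|\fix_\Bmc(x)|$ with $f$. Your route is shorter and avoids the $\rbf_B$ bookkeeping entirely; what the paper's direct computation buys is the by-product information about the $2$-cycle counts $\rbf_B$ in fixed blocks (for instance $\sbf_B=0$ forces $\rbf_B=8$), which is the kind of data reused in the subsequent analysis of $2$-elements in Lemmas~\ref{lem:2-2} and~\ref{lem:2-3}, although it is not part of the statement being proved.
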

\begin{proof}
Since $v=121$ is odd, $f>0$. Let $\alpha\in\fix_\Pmc(x)$. Then by Lemma~\ref{lem:bi-2}(a),  $f=(k+1+(\sbf_{\alpha}-1)^{2})/2$, and either  $\sbf_\alpha$ is independent of $\alpha\in\fix_\Pmc(x)$, or each $\sbf_\alpha\in\{0,2\}$. Since $k-2=14$ is not a square, it follows from Lemma~\ref{lem:bi-2}(c) that $(\sbf_\alpha-1)^2\leq k-5=11$, and since $f$ is an integer, this implies that either $(f,\sbf_\alpha)=(13, 4)$, or $\sbf_\alpha\in\{0,2\}$ and $f=9$. This proves part (a).

By  Lemma~\ref{lem:fix}, $f=|\fix_\Bmc(x)|$. Let $B\in\fix_\Bmc(x)$. Then by Lemma~\ref{lem:fix-bi}(b), $f=\sbf_B(\sbf_B-1)/2+\rbf_B+1$, where   $\rbf_B$ is the number of $2$-cycles of $x$ in $[B]$. Note that $\rbf_B\leq k/2=8$, so $\sbf_B$ is even since $k$ is even.
If $\sbf_B=0$, then $f=\rbf_B+1\leq 9$, so by part (a), $f=9$ and $\rbf_B=8$.

\emph{We claim that, if $\sbf_B>0$ and $\alpha\in\fix_{[B]}(x)$, then  $\sbf_\alpha\geq \sbf_{B}$.} Under these conditions,
for each $\beta\in\fix_{[B]}(x)\setminus\{\alpha\}$, the second block containing $\{\alpha,\beta\}$ is also fixed by $x$. These blocks together with $B$ give $\sbf_{B}$ blocks in $\fix_{[\alpha]}(x)$. Thus $\sbf_\alpha\geq \sbf_{B}$, proving the claim.

Suppose first that $f=13$, so by part (a),  $\sbf_\alpha=4$ for all $\alpha\in\fix_\Pmc(x)$. Also  $\sbf_B(\sbf_B-1)/2=f-1-\rbf_B=12-\rbf_{B}\geq 4$ so $\sbf_{B}\geq 4$. However, by the claim we have $4=\sbf_\alpha\geq \sbf_{B}$, and hence $\sbf_{B}=4$ for all $B\in\fix_\Bmc(x)$, as in part (b).
Suppose now that $f=9$, so by part (a), $\sbf_\alpha\in\{0,2\}$  for each $\alpha\in\fix_\Pmc(x)$. If $\sbf_B>0$ and $\alpha\in\fix_{[B]}(x)$,  then by the claim,  $\sbf_B\leq\sbf_\alpha\leq 2$. Hence $\sbf_B=\sbf_\alpha= 2$ (as $\sbf_B$ is even). Thus   $\sbf_B\in\{0,2\}$  for each $B\in\fix_\Bmc(x)$, and (b) is proved.
\end{proof}

\begin{lemma}\label{lem:2-2}
Let $x\in\Aut(\Dmc)$ with $o(x)=2^a\geq4$ and  $f:=|\fix_\Pmc(x)|>1$.
\begin{enumerate}[\rm (a)]
\item Then $o(x)=4$, and $f =|\fix_\Bmc(x)|$.
\item Let $B\in\fix_{\Bmc}(x)$. Then $|\fix_\Pmc(x^2)|=13$ and $|\fix_{[B]}(x^2)|=4$.
\item Let $\sbf_{B}=|\fix_{[B]}(x)|$ and $\rbf_{B}$ be the number of $2$-cycles of $x$ in $B$.
Then one of the following holds.
\begin{enumerate}[\rm (i)]
\item $x$ has cycle type $1^3\, 2^5\, 4^{27}$, and either $(\sbf_{B}, \rbf_{B})= (0,2)$  for all $B\in\fix_{\Bmc}(x)$, or  $(\sbf_{B}, \rbf_{B})=(2,1)$ for two of the fixed blocks  of $x$ and $(0,2)$ for the third.
\item $x$ has cycle type $1^7\, 2^3\, 4^{27}$, and $(\sbf_{B}, \rbf_{B})=(4,0)$ for all $B\in\fix_{\Bmc}(x)$.
\end{enumerate}
\end{enumerate}
\end{lemma}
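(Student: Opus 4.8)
The plan is to pin down the central involution $z := x^{2^{a-1}}$: once we know it fixes exactly $13$ points, parts (a)--(c) drop out by counting. Write $c_j$ for the number of $\langle x\rangle$-orbits of length $2^j$ on $\Pmc$, so that $\sum_{j} 2^j c_j = 121$ and $|\fix_{\Pmc}(x^{2^i})| = \sum_{j\le i} 2^j c_j$ for each $i$. The equality $f = |\fix_{\Bmc}(x)|$ is immediate from Lemma~\ref{lem:fix}. Since $\fix_{\Pmc}(x) \subseteq \fix_{\Pmc}(z)$ and $f > 1$, the involution $z$ has more than one fixed point, so Lemma~\ref{lem:2-1} gives $|\fix_{\Pmc}(z)| \in \{9,13\}$.

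The crux is to exclude $|\fix_{\Pmc}(z)| = 9$; this is the step I expect to be the main obstacle, since the remaining arguments are bookkeeping. I would reduce to order $4$ by setting $y := x^{2^{a-2}}$, an element of order $4$ with $y^2 = z$ and $|\fix_{\Pmc}(y)| \ge |\fix_{\Pmc}(x)| > 1$. Suppose $|\fix_{\Pmc}(z)| = 9$. As $\fix_{\Pmc}(z) = \fix_{\Pmc}(y^2)$ is exactly the set of $y$-fixed and $y$-transposed points, the number $c_0 = |\fix_{\Pmc}(y)|$ of fixed points is odd and, being greater than $1$, at least $3$. On the other hand, Lemma~\ref{lem:2-1}(a) applied to $z$ forces $|\fix_{[\alpha]}(z)| \in \{0,2\}$ for every $\alpha \in \fix_{\Pmc}(z)$. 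Fix $\alpha \in \fix_{\Pmc}(y)$; the blocks of $[\alpha]$ fixed by $z$ are precisely those fixed or transposed by $y$, so $|\fix_{[\alpha]}(y)| + 2r = |\fix_{[\alpha]}(z)|$, where $r$ is the number of $\langle y\rangle$-orbits of length $2$ in $[\alpha]$. Choosing a second fixed point $\beta$, the two blocks through the fixed pair $\{\alpha,\beta\}$ both lie in $[\alpha]$ and are fixed by $z = y^2$, whence $|\fix_{[\alpha]}(z)| \ge 2$, hence $=2$; then the counting identity in Lemma~\ref{lem:fix-bi}(c) (whose proof uses only that $\alpha$ is a fixed point) gives $|\fix_{\Pmc}(y)| = |\fix_{[\alpha]}(y)|(|\fix_{[\alpha]}(y)|-1)/2 + r + 1 = 2$, contradicting $c_0$ odd and $\ge 3$. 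Therefore $|\fix_{\Pmc}(z)| = 13$.

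With $|\fix_{\Pmc}(z)| = 13$, the points moved by $z$ are exactly those in $\langle x\rangle$-orbits of length $2^a$, so $2^a c_a = 121 - 13 = 108 = 2^2\cdot 27$; as $a \ge 2$ this forces $a = 2$, giving $o(x) = 4$ and completing (a). Now $z = x^2$, so $|\fix_{\Pmc}(x^2)| = 13$, and for $B \in \fix_{\Bmc}(x) \subseteq \fix_{\Bmc}(x^2)$ Lemma~\ref{lem:2-1}(b) gives $|\fix_{[B]}(x^2)| = 4$, which is (b). The same relation $|\fix_{\Pmc}(x^2)| = c_0 + 2c_1 = 13$ yields $4 c_2 = 108$, so $c_2 = 27$.

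Finally, for (c): Lemma~\ref{lem:2-1}(a),(b) give $|\fix_{[\alpha]}(x^2)| = |\fix_{[B]}(x^2)| = 4$, that is $\sbf_\alpha + 2\rbf_\alpha = \sbf_B + 2\rbf_B = 4$, while Lemma~\ref{lem:fix-bi}(b),(c) give $f = \sbf_\alpha(\sbf_\alpha-1)/2 + \rbf_\alpha + 1 = \sbf_B(\sbf_B-1)/2 + \rbf_B + 1$. Solving these simultaneously forces $\sbf_\alpha, \sbf_B \in \{0,2,4\}$ and $f \in \{3,7\}$. Since $f = c_0$, either $f = 7$, where necessarily $(\sbf_B,\rbf_B) = (4,0)$ for all $B$ and the cycle type is $1^7\,2^3\,4^{27}$, giving (c)(ii); or $f = 3$, where $(\sbf_B,\rbf_B) \in \{(0,2),(2,1)\}$ and the type is $1^3\,2^5\,4^{27}$. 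In the latter case I would finish by analysing the bipartite incidence between the three fixed points and three fixed blocks: the vertices of degree $2$ form a $2$-regular bipartite graph, hence a disjoint union of even cycles, and a $6$-cycle is impossible because both blocks through any fixed pair are fixed, which would produce a fourth fixed block. This leaves only the empty graph (all three blocks of type $(0,2)$) or a single $4$-cycle (two blocks of type $(2,1)$ and one of type $(0,2)$), which is exactly (c)(i).
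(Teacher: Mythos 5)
Your proposal is correct, and it takes a genuinely different route from the paper's. The paper fixes a block $B\in\fix_{\Bmc}(x)$ and argues by cases on $\sbf_{B}$ ($\geq 4$, $=2$, $=0$): in each case it forces $(f_{y},s_{y})=(13,4)$ for the involution $y=x^{2^{a-1}}$ via Lemma~\ref{lem:2-1}, then rules out $o(x)\geq 8$ by geometry inside $B$ (a $4$-cycle of $x$ in $B$ and the four ``second blocks'' it generates, or a reduction to $x^{2}$), and ends with a block-by-block consistency argument. You instead pin down the central involution $z$ first: your exclusion of $|\fix_{\Pmc}(z)|=9$ --- combining Lemma~\ref{lem:2-1}(a) with the fact that the two blocks through a pair of fixed points of $y=x^{2^{a-2}}$ are both fixed by $z=y^{2}$, and then the counting identity of Lemma~\ref{lem:fix-bi}(c) applied to $y$ to reach the contradiction $|\fix_{\Pmc}(y)|=2$ --- does not appear in the paper, and once $|\fix_{\Pmc}(z)|=13$ is known, $o(x)=4$ drops out of the arithmetic $2^{a}c_{a}=108$ with $a\geq 2$, which is cleaner than the paper's order arguments. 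Parts (b) and (c) then reduce to solving the two counting identities simultaneously, and your even-cycle analysis of the bipartite incidence between fixed points and fixed blocks is essentially the paper's closing paragraph in graph-theoretic language. Two minor points: (i) you rightly flag that Lemma~\ref{lem:fix-bi}(c) is being invoked for an arbitrary fixed point of $y$, even when no fixed block of $y$ passes through it --- the lemma's statement assumes $\alpha\in B$ with $B$ fixed, but its proof uses only that $\alpha$ is fixed, so this is legitimate; (ii) in the $6$-cycle exclusion, to conclude that the second block through a fixed pair is a \emph{fourth} fixed block, you should note that it cannot coincide with either of the other two fixed blocks in the cycle, since that would create a vertex of bipartite degree $3$, which your constraint $\sbf_{B}\in\{0,2\}$ forbids.
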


\begin{proof}
By Lemma~\ref{lem:fix}, $f=|\fix_\Bmc(x)|>1$. Choose $B\in\fix_\Bmc(x)$. Then by Lemma~\ref{lem:fix-bi}(b), $f=1+\sbf_B(\sbf_B-1)/2+\rbf_{B}$,
and since $f>1$, the pair $(\sbf_{B}, \rbf_{B})\ne (0,0)$ or $(1,0)$. Let $y=x^{2^{a-1}}$, the involution in $\langle x\rangle$, and let $f_y=|\fix_\Pmc(y)|$.

Suppose first that $\sbf_{B}>0$. Then since $k=16$, $\sbf_{B}$ is even so $\sbf_{B}\geq2$. Let $\alpha\in \fix_{[B]}(x)$. Then, for each of the other $\sbf_{B}-1$ points $\alpha'\in\fix_{[B]}(x)\setminus\{\alpha\}$, the second block $B'$ containing $\{\alpha,\alpha'\}$ is also fixed by $x$, and these blocks are distinct for distinct $\alpha'$ (since $\Dmc$ is a biplane). These blocks, together with $B$, give $\sbf_{B}$ blocks on $\alpha$ which are fixed by $x$, and hence are also fixed  by $y$. Also, by Lemma~\ref{lem:fix-bi}(a), there are $\rbf_\alpha=\rbf_{B}$ cycles of $x$ of length $2$ on the set $[\alpha]$ of blocks containing $\alpha$. Each of the $2\rbf_{B}$ blocks in these $2$-cycles of $x$ is fixed by $y$, and hence $s_y:=|\fix_{[\alpha]}(y)|\geq \sbf_{B}+2\rbf_{B}\geq2$.

Suppose that $\sbf_{B}\geq4$. Then $s_y\geq 4$, and it follows from Lemma~\ref{lem:2-1} applied to $y$ that $(f_y,s_y)=(13,4)$. Thus $\sbf_{B}=4, \rbf_{B}=0$, and $f=1+\binom{4}{2}=7$. This means that each $x$-cycle in $B\setminus \fix_{[B]}(x)$ has size a multiple of $4$, and since $|B\setminus\fix_{[B]}(x)|=12\not\equiv 0\pmod{8}$, it follows that $x$ must have at least one $4$-cycle in $B$, say $(\beta_1,\dots,\beta_4)$. If $B_i$ is the second block containing $\{\alpha, \beta_i\}$, for
 $1\leq i\leq 4$, then  the blocks $B_1,\dots,B_4$ are pairwise distinct (since distinct blocks intersect in exactly two points), each lies in $[\alpha]$, and they are permuted cyclically by $\langle x\rangle$.     If $o(x)\geq8$, then  $y$ would also fix each of the $B_i$, implying that $s_y>4$, which is a contradiction by Lemma~\ref{lem:2-1}. Hence $o(x)=4$ and $x$ has $(f_y-f)/2=(13-7)/2=3$ cycles of length $2$, and hence has cycle type $1^7\, 2^3\, 4^{27}$. Thus all assertions are proved, except for showing that $\sbf_{B}$ is independent of the choice of $B$. We defer this.

 Assume next that $\sbf_{B}=2$. Then in its action on the 14 points of $B\setminus\fix_{[B]}(x)$, $x$ must have at least one $2$-cycle, so $\rbf_{B}\geq 1$.  Then $s_y\geq \sbf_{B}+2\rbf_{B}\geq4$, and again by Lemma~\ref{lem:2-1}, we obtain $(f_y,s_y)=(13,4)$, so $(\sbf_{B}, \rbf_{B})=(2,1)$.
 Hence $f=1+\binom{2}{2}+1=3$, and since there are 12 points of $B$ lying in  $x$-cycles of length at least 4, $x$ must have at least one $4$-cycle in $B$. If $o(x)\geq 8$, then the hypotheses of the lemma hold also for $x^2$, and $x^2$ has two $2$-cycles in the $x$-cycle of length 4 in $B$ and also $|\fix_{[B]}(x^2)|=4$. Since  $y$ is also the involution in $\langle x^2\rangle$, the argument just given implies that $s_y\geq 4+2\times 2=8$, which is a contradiction  by Lemma~\ref{lem:2-1}. Thus $o(x)=4$ in this case also and $x$ has $(f_y-f)/2=(13-3)/2=5$ cycles of length $2$, and  hence has cycle type $1^3\, 2^5\, 4^{27}$.  Again   all assertions are proved in this case, except for dealing with the values of $\sbf_{B}$ for other fixed blocks $B$. We defer this until we have considered the case $\sbf_{B}=0$.

Now suppose that $\sbf_{B}=0$. Then $\rbf_{B}>0$ since here $f=1+\rbf_{B}>1$.
The number $2\rbf_{B}$ of points of $B$ lying in $2$-cycles of $x$ must be a multiple of $4$, so $\rbf_{B}$ is even,
and hence $\rbf_{B}\geq2$. This means that $y\in\langle x^2\rangle$ fixes at least $2\rbf_{B}\geq 4$ points of $B$, and hence by Lemma~\ref{lem:2-1}, $(f_y,s_y)=(13,4)$. It follows that $\rbf_{B}=2$, and hence that $f=1+\rbf_{B}=3$, and $x$ has at least one $4$-cycle in $B\setminus \fix_{[B]}(y)$. Now $y$ cannot act trivially on the points of this $4$-cycle (since $s_y=4$), and hence $y\not\in\langle x^4\rangle$. Thus $o(x)=4$, $y=x^2$, and $x$ has $(f_y-f)/2 = (13-3)/2=5$ cycles of length $2$ in $B$,  and  hence has cycle type $1^3\, 2^5\, 4^{27}$.

Finally we consider blocks $B'\in\fix_\Bmc(x)\setminus\{B\}$. If $\sbf_{B'}\in\{0,2\}$, then our arguments above show that $x$ has $f=3$ fixed points. Thus in  case (c)(ii), where $f=7$, we must have $(\sbf_{B'}, \rbf_{B'})=(4,0)$ for all $B'$. Assume now that $x$ has $f=3$ fixed points so we are in case (c)(i), and $x$ has three fixed blocks, say $B, B', B''$. If $x$ has no fixed points in any of these blocks there is nothing to prove, so suppose without loss of generality that  $(\sbf_{B}, \rbf_{B})=(2,1)$, and let $\fix_{[B]}(x)=\{\alpha,\beta\}$. Then the second block containing $\{\alpha,\beta\}$ is also fixed by $x$; suppose that it is $B'$, so  $(\sbf_{B'}, \rbf_{B'})=(2,1)$. The third fixed block $B''$ intersects
$B$ in a pair of points that must be left invariant by $x$ and is disjoint from $\{\alpha,\beta\}$ (since $\Dmc$ is a biplane). Thus $B''\cap B$ is a $2$-cycle of $x$, and similarly $B''\cap B'$ is a $2$-cycle of $x$. Since $B$ and $B''$ are the only blocks containing $B''\cap B$, these $2$-cycles of $x$ in $B''$ are distinct, and it follows that $\rbf_{B''}=2$ and therefore (from the argument above) that $\sbf_{B''}=0$. This completes the proof.
\end{proof}


\begin{lemma}\label{lem:2-3}
Let $x\in\Aut(\Dmc)$ with $o(x)=2^a\geq4$ and suppose that $f:=|\fix_\Pmc(x)|\leq 1$. Then $a\in\{2,3\}$, $f=1$, say    $\fix_{\Bmc}(x)
=\{B\}$, and all $x$-cycles in $B$ have lengths at least  $4$. Moreover  {\rm Table~\ref{T1}} gives the possible values for $a$,
the cycle type of $x$, the numbers $r_4, r_8$  of $x$-cycles in $B$ of lengths $4$ and $8$, respectively, and information about the element $x^2$ (where, if $o(x)=4$, then $f_{x^2}, s_{x^2}$ are the parameters $f, \sbf_\alpha$ in Lemma~$\ref{lem:2-1}$ applied to $x^2$).
\end{lemma}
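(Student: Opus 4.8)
The plan is to first fix $f$ and the fixed-block data, then bound $a$ through the central involution $y=x^{2^{a-1}}$, determine the cycle type by descending through the $2$-power images of $x$, and finally eliminate $a=4$. Since $v=121$ is odd while every non-trivial $\langle x\rangle$-cycle has even length, $f$ is odd; together with $f\le 1$ this gives $f=1$, and hence $|\fix_\Bmc(x)|=1$ by Lemma~\ref{lem:fix}. Write $\fix_\Bmc(x)=\{B\}$ and $\fix_\Pmc(x)=\{\alpha\}$. Putting $|\fix_\Bmc(x)|=1$ into Lemma~\ref{lem:fix-bi}(b) gives $\sbf_B(\sbf_B-1)/2+\rbf_B=0$, so $\rbf_B=0$ and $\sbf_B\in\{0,1\}$. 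As $\rbf_B=0$ means $x$ has no $2$-cycle on $[B]$, each non-fixed cycle of $x$ in $B$ has length divisible by $4$, so $16-\sbf_B\equiv 0\pmod{4}$ and therefore $\sbf_B=0$. Thus $\alpha\notin B$ and all $x$-cycles in $B$ have length at least $4$.

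To bound $a$, I would use $y=x^{2^{a-1}}$. A cycle of $x$ in $B$ has length a divisor of $2^a$ that is at least $4$, and on it $y$ fixes every point when the length is less than $2^a$ but no point when the length equals $2^a$; hence $|\fix_{[B]}(y)|=16-2^{a}r$, where $r$ is the number of length-$2^a$ cycles of $x$ in $B$. Lemma~\ref{lem:2-1}(b) applied to $y$ forces $|\fix_{[B]}(y)|\in\{0,2,4\}$, so $2^{a}r\in\{12,14,16\}$; as $2^a\mid 2^{a}r$ this gives $2^a\mid 16$, i.e.\ $a\le 4$, and in fact $2^{a}r=16$. Consequently $|\fix_{[B]}(y)|=0$, so $f_y:=|\fix_\Pmc(y)|=9$ by Lemma~\ref{lem:2-1}(b).

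For the cycle type I would descend $x,x^2,x^4,\dots$. If some power $w=x^{2^i}$ of order at least $4$ had more than one fixed point, Lemma~\ref{lem:2-2}(a) would give $o(w)=4$, forcing $w^2=y$, and then Lemma~\ref{lem:2-2}(b) would give $|\fix_\Pmc(y)|=13$, against $f_y=9$. Hence every such power has exactly one fixed point (parity upgrading $f\le1$ to $f=1$). Counting the points whose $x$-cycle length divides $2^{a-2}$ and then those dividing $2^{a-1}$ shows that the non-trivial cycles all have length $2^{a-1}$ or $2^a$, with precisely $8/2^{a-1}$ of the former and $112/2^a$ of the latter. This pins the cycle type down uniquely: $1^1\,2^4\,4^{28}$ with $(r_4,r_8)=(4,0)$ for $a=2$, and $1^1\,4^2\,8^{14}$ with $(r_4,r_8)=(0,2)$ for $a=3$, which are the rows of Table~\ref{T1}. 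For $a=4$ the only survivor is $1^1\,8^1\,16^7$.

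The main obstacle is to rule out this last type, which is consistent with every identity in Lemmas~\ref{lem:2-1} and \ref{lem:2-2}. Here I would study the fixed substructure of $y=x^8$: its nine fixed points are $\alpha$ and the length-$8$ point-orbit $S=\{s_0,\dots,s_7\}$, and its nine fixed blocks are $B$ and the length-$8$ block-orbit $\{D_0,\dots,D_7\}$. Since $x^4$ fixes only $\alpha$ among points and $\alpha$ lies in no $y$-fixed block (as $x$ fixes no block through $\alpha$ and has no $2$-cycle on $[\alpha]$), the invariant pair $D_i\cap D_{i+4}$ cannot be fixed pointwise by $x^4$, so it is a $2$-cycle of $x^4$; as all $2$-cycles of $x^4$ are supported on $S$, this gives $D_i\cap D_{i+4}\subseteq S$, forcing each $D_i$ to meet $S$ in exactly two points and yielding an $x$-invariant pairing of $S$ into ``diameters'' $\{s_j,s_{j+4}\}$. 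I expect the contradiction to come from confronting this rigid pattern among $S$, the blocks $D_i$, and the sixteen blocks through $\alpha$ (a single $x$-cycle meeting $B$ in pairs that cover $B$ twice) with the defining condition $\lambda=2$; carrying this out without a direct machine check is the crux.
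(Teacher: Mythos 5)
Your first three paragraphs are correct and run essentially parallel to the paper's argument: parity forces $f=1$, Lemma~\ref{lem:fix-bi}(b) gives $\rbf_B=0$ and then $\sbf_B=0$, and the candidate cycle types $1^1\,2^4\,4^{28}$, $1^1\,4^2\,8^{14}$, $1^1\,8^1\,16^{7}$ emerge. Your uniform step that every power $x^{2^i}$ of order at least $4$ has exactly one fixed point (playing Lemma~\ref{lem:2-2} against $f_y=9$) is a clean replacement for the paper's descent through $x^2$ and its case division on $(r_4,r_8)$. One small repair: from $2^ar\in\{12,14,16\}$ the divisibility $2^a\mid 2^ar$ alone does not exclude $2^ar=12$ when $a=2$; you need the extra remark that for $a=2$ every cycle of $x$ in $B$ has length exactly $4$, so $r=4$. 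The conclusion $2^ar=16$, hence $f_y=9$, is then correct.

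The genuine gap is the one you concede: you never eliminate the type $1^1\,8^1\,16^7$, and that elimination is exactly what makes the lemma assert $a\in\{2,3\}$ rather than $a\leq 4$ (and is what Proposition~\ref{prop:2bound} relies on). For comparison, the paper kills this case by counting the union of the eight $y$-fixed blocks $B_1,\dots,B_8$ other than $B$ (the second blocks through the $2$-cycles of $y$ in $B$): it asserts, citing Lemma~\ref{lem:2-1}, that $y$ has eight $2$-cycles on each $B_i$, concludes that $\bigcup_i B_i$ meets only $x$-cycles of length $16$ and has size $56$, and contradicts $16\nmid 56$. But note that your own antipodal observation, which is correct, refutes precisely that assertion: the $B_i$ form the $x$-cycle of length $8$ on blocks, $x^4$ fixes only $\alpha$, so $B_i\cap B_i^{x^4}$ is a $2$-cycle of $x^4$ and therefore lies in the point $8$-cycle $S$; hence each $B_i$ contains \emph{two} $y$-fixed points, whereas Lemma~\ref{lem:2-1}(b) only yields $\sbf_{B_i}\in\{0,2\}$, never $\sbf_{B_i}=0$ for these blocks. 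With this corrected structure the same double count gives $|\bigcup_i (B_i\setminus B)|=56$, but that set is $S$ together with three full $16$-cycles ($56=8+3\cdot 16$), so the divisibility contradiction evaporates. So your instinct that this configuration is the crux is right in a strong sense: it is not disposed of by the paper's argument as written, and completing your proof (or repairing the paper's) requires a genuinely new idea for the $1^1\,8^1\,16^7$ case rather than the counting argument the paper offers.
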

\begin{table}[h]
    \centering
    \small
    \caption{Cycle types of $2$-elements for Lemma~\ref{lem:2-3}}\label{T1}
    \begin{tabular}{lcclll}
        \hline
        $a$  & $o(x)$ & Cycle type of $x$   & $r_4$ & $r_8$  & Description of $x^2$\\
        \hline
        $2$ & $ 4$      & $1^1\, 2^4\, 4^{28}$ & $4$  & $0$  & $x^2$ as in Lemma~\ref{lem:2-1} with $(f_{x^2}, s_{x^2})=(9, 0)$\\
        $3$ & $ 8$      & $1^1\, 4^2\, 8^{14}$ & $0$  & $2$  & $x^2$ as in line for $a=2$\\
                \hline
    \end{tabular}
\end{table}

\begin{proof}
Since $v=121$ is odd, we must have $f>0$ and hence $f=1$ since we are assuming $f\leq 1$. By Lemma~\ref{lem:fix}, $x$ also fixes exactly one block, say $B$. By Lemma~\ref{lem:fix-bi}(b), $f=1+\sbf_{B}(\sbf_{B}-1)/2+\rbf_{B}$, where $\sbf_{B}=|\fix_{[B]}(x)|$ and $\rbf_{B}$ is the number of $2$-cycles of $x$ in $B$. Hence $\rbf_{B}=0$ and $\sbf_{B}\leq 1$. Since $|B|=16$, $\sbf_{B}$ is even and so $\sbf_{B}=0$. Thus all $x$-cycles in $B$ have lengths greater than $2$.

Suppose first that the number $r_4$ of $4$-cycles in $B$ is at least 1. Since $|B|=16$ it follows that $r_4$ is even, so $r_4\geq 2$.  Then $x^2$ has  no fixed points in $B$ and has $2r_4\geq 4$ cycles in $B$ of length 2, and so by  Lemma~\ref{lem:fix-bi}(b) applied to $x^2$,  $x^2$ fixes $1+2r_4\geq 5$ blocks. By Lemma~\ref{lem:fix} again, $x^2$ also fixes $1+2r_4$ points. Suppose in addition that $a\geq3$ so that $o(x^2)\geq4$. Applying Lemma~\ref{lem:2-2} to $x^2$ gives a contradiction, as (c)(i) cannot hold since $x^2$ fixes more than $3$ points, and (c)(ii) cannot hold since $x^2$ has no fixed points in $B$. Thus we conclude that $a=2$ in this case.  This means that $r_4=4$, and hence $x^2$ fixes $B$ setwise and has eight $2$-cycles in $B$. By Lemma~\ref{lem:2-1}, $x^2$ has the parameters $(f_{x^2}, s_{x^2})=(9, 0)$. Hence $x$ has $(f_{x^2}-f)/2=(9-1)/2=4$ cycles of length $2$, and hence the line for $a=2$ of Table~\ref{T1} holds.

Thus we may assume that $r_4=0$. Suppose next that $r_8>0$. Then $o(x)\geq 8$ and  $x$ has $r_8=2$ cycles of length
$8$ in $B$. This means that $x^2$ has four $4$-cycles in $B$. If $x^2$ fixes more than 1 point, then  Lemma~\ref{lem:2-2} applies and leads to a contradiction since such elements have no fixed blocks on which they act with four $4$-cycles. Thus $x^2$ has exactly 1 fixed point and satisfies the hypotheses of this Lemma~\ref{lem:2-3}. It follows from the previous paragraph that $o(x^2)=4$ and the properties of the `$a=2$' line of Table ~\ref{T1} hold for $x^2$.   We deduce from this that $o(x)=8$ and $x$ has cycle type   $1^1\, 4^2\, 8^{14}$, so the properties of the line for $a=3$ of Table~\ref{T1} hold for $x$.

Finally assume that $r_4=r_8=0$. Then since $x$ fixes $B$ setwise it must act on $B$ as a $16$-cycle. In particular $o(x)\geq 16$, and $x^2$ has two $8$-cycles in $B$. If $x^2$ has more than one fixed point, then Lemma~\ref{lem:2-2} applies to $x^2$ and we have a contradiction since $o(x^2)\geq 8$. Thus $x^2$ has just one fixed point and satisfies the hypotheses of Lemma~\ref{lem:2-3}. It now follows from the previous paragraph that $o(x^2)=8$  and   the properties of the  `$a=3$' line of Table ~\ref{T1} hold for $x^2$.   We deduce from this that $o(x)=16$ and $x$ has cycle type   $1^1\, 8^1\, 16^{7}$.
Let $y=x^8$. Then from Table~\ref{T1} it follows that the involution  $y$ has the parameters $(f_{y}, s_{y})=(9, 0)$ from Lemma~\ref{lem:2-1} and $y$ fixes nine points and nine blocks, and the fixed points of $y$ are precisely the nine points lying in $x$-cycles of length 1 or 8. Also as argued in the second paragraph of this proof, $y$ has eight $2$-cycles in $B$,
and by Lemma~\ref{lem:2-1}, $y$ has eight $2$-cycles in each of its fixed blocks.
For each of the $2$-cycles $(\alpha_i,\beta_i)$ of $y$ in $B$, $y$ fixes the second block $B_i$ containing
$\{\alpha_i,\beta_i\}$. The nine blocks $B, B_1,\dots, B_8$ are pairwise distinct (since $\Dmc$ is a biplane),
and hence are precisely the fixed blocks of $y$. Since $x$ centralises $y$ and fixes $B$, it
follows that $x$ leaves invariant $\cup_{i=1}^8 B_i$. Now for each  $B_i$, and each $j\ne i$, $B_i\cap B_j$ has size $2$, and these $2$-subsets are distinct $2$-cycles of $y$ for distinct $j$. It follows that each of the seven $y$-cycles of length
$2$ in $B_i\setminus\{\alpha_i,\beta_i\}$ occurs as $B_i\cap B_j$ for a unique $j\ne i$.
Thus the number of pairs $(B_i,\gamma)$, where $i=1,\dots, 8$ and $\gamma\in B_i\setminus\{\alpha_i,\beta_i\}$
can be counted in two ways and this gives $|\cup_{i=1}^8 B_i|\cdot 2 = 8\cdot 14$, so
$|\cup_{i=1}^8 B_i| = 56$. This is a contradiction, since $\cup_{i=1}^8 B_i$ is an $x$-invariant
set of size $56$ and is contained in a union of $x$-cycles, each of length 16.
\end{proof}


\subsection*{\textbf Proof of Proposition~\ref{prop:2bound}} Let $P$ be a Sylow $2$-subgroup of $\Aut(\Dmc)$ and suppose that $P\ne1$. Since $v=121$ is odd, $P$ leaves some block invariant, say $B$. Any nontrivial element of $P$ fixing $B$ pointwise would have at least $16$ fixed points, and it follows from Lemmas~\ref{lem:2-1} and \ref{lem:2-2} that there are no such elements. Hence $P$ acts faithfully on $B$, that is, $P\cong P^{[B]}\leq\Sym([B])$. By  Lemmas~\ref{lem:2-1}, \ref{lem:2-2} and \ref{lem:2-3},  each nontrivial element of $P$ fixes at most four points of $[B]$. Suppose that the shortest nontrivial $P$-orbit  in $[B]$ (that is, of size greater than 1) has length $2^a$ and let $\alpha$ be a point of this orbit. Then $P_\alpha$ fixes at least 2 points of $[B]$. If $P_\alpha=1$, then  $|P|=2^a\leq [B]=16$, so suppose that $P_\alpha\ne 1$. Suppose that the shortest nontrivial $P_\alpha$-orbit in $[B]$ has length $2^b$ and let $\beta$ be a point of this orbit. Note that $b\leq a$ and if $b=a$, then  the $P_\alpha$-orbit containing $\beta$ must be equal to the $P$-orbit containing $\beta$, and hence $P$ must have two orbits (at least) in $[B]$ of length $2^a$ so $2^a\leq 8$ in this case.  Hence we always have $2^{a+b}\leq 2^7$. Also $P_{\alpha,\beta}$ fixes at least 4 points of $[B]$. If $P_{\alpha,\beta}=1$, then  $|P|=2^{a+b}\leq 2^7$, so suppose that $P_{\alpha,\beta}\ne 1$. Then $P_{\alpha,\beta}$ fixes exactly 4 points of $[B]$. This implies that $P_\alpha$ fixes exactly 2 points of $[B]$ and hence must have an orbit of length $2$ in $[B]$, so $2^b=2$. Further, $P_{\alpha,\beta}$ must act semiregularly on the 12 remaining points of $[B]$ as a group of order $2$ or $4$ (by Lemmas~\ref{lem:2-1} and \ref{lem:2-2}). Thus $|P|\leq 2^{a+1}\cdot 4\leq 2^7$.

\subsection{Odd order $p$-subgroups of automorphisms}\label{sec:121-sp}

In this subsection, we study the $p$-subgroups of $\Aut(\Dmc)$ for odd primes $p$..

\begin{lemma}\label{lem:121-s3}
Let $\Dmc$ be a biplane with parameters $(121,16,2)$. Then a Sylow $3$-subgroup of $\Aut(\Dmc)$ is elementary abelian of order at most $3^{2}$.
\end{lemma}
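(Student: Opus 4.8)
The plan is to combine the counting machinery already established for fixed points of odd-order automorphisms with elementary $p$-group theory to pin down both the exponent and the rank of a Sylow $3$-subgroup $P$. Since $v=121$ is coprime to $3$, any nontrivial $x\in\Aut(\Dmc)$ of $3$-power order has $f:=|\fix_\Pmc(x)|\equiv 1\pmod 3$, so $f>0$, and by Lemma~\ref{lem:fix} also $|\fix_\Bmc(x)|=f>0$. Fixing a block $B\in\fix_\Bmc(x)$ and writing $\sbf_B=|\fix_{[B]}(x)|$, Corollary~\ref{cor:fix-bi-bound}(b) applies because $o(x)$ is odd: since $k-2=14$ is not a square, we get $f\leq k/2=8$, and $f=\sbf_B(\sbf_B-1)/2+1$. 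Combined with $\sbf_B\equiv k\equiv 1\pmod 3$ and $\sbf_B(\sbf_B-1)\leq 14$, the only possibilities are $(f,\sbf_B)=(1,1)$ or $(4,4)$. Thus every nontrivial $3$-element fixes either exactly one point (and one block, incident) or exactly four points (and four blocks).

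First I would bound the exponent, showing $P$ has exponent $3$. Suppose $x\in P$ has order $9$; then $x^3$ is a nontrivial $3$-element, and I would derive a contradiction by comparing the fixed-point sets of $x$ and $x^3$. If $x$ fixes four points they lie in a common structure (four fixed blocks pairwise meeting in fixed points, forming a subdesign by Lemma~\ref{lem:fix-bi}(d)), and $x$ permutes the remaining points in cycles whose lengths divide $9$; a cycle-type count modulo $9$ using $|\fix_\Pmc(x)|\in\{1,4\}$ and $|\fix_\Pmc(x^3)|\in\{1,4\}$ (with $\fix_\Pmc(x)\subseteq\fix_\Pmc(x^3)$) should be incompatible. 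The subdesign from the $(4,4)$ case has parameters forcing $(\sbf_B-1)^2=9=k-2$? No---$k-2=14$ is not a square, so Lemma~\ref{lem:fix-bi}(d) forces the inequality branch, consistent with $\sbf_B=4$ giving $\sbf_B(\sbf_B-1)=12\leq 14$. The key numerical obstruction is that the non-fixed points, numbering $121-f\in\{117,120\}$, must decompose into $3$- and $9$-cycles; reconciling $f$ for $x$ versus $x^3$ via $|\fix_\Pmc(x^3)|-|\fix_\Pmc(x)|$ being twice the number of $3$-cycles of $x$ (an even count) against the allowed values $\{1,4\}$ rules out order $9$.

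Next I would bound the rank, showing $|P|\leq 3^2$. Having established exponent $3$, every nontrivial element of $P$ fixes at most $4$ points, so I examine the action of $P$ on $\Pmc$ (or on a $P$-invariant block, guaranteed since $v$ is odd). I would argue via orbit-stabiliser: take the shortest nontrivial $P$-orbit, of length $3^a$ with stabiliser $P_\alpha$ fixing a set of size $\geq 2$ wait---here each element fixes at most $4$ points, so a chain of stabilisers $P\geq P_\alpha\geq P_{\alpha,\beta}$ cannot descend too far before reaching the identity, exactly as in the Proof of Proposition~\ref{prop:2bound} for the prime $2$. Since a nontrivial $3$-element fixes at most $4$ points of $\Pmc$, and any subgroup fixing $5$ or more points must be trivial, a two-step stabiliser chain forces $|P|\leq 3^a\cdot 3^b$ with the fixed-point constraints capping the exponents; I expect this to yield $|P|\leq 3^2$. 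Finally, exponent $3$ together with $|P|\leq 3^2$ gives that $P$ is elementary abelian of order at most $3^2$.

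The main obstacle will be the exponent bound: ruling out order $9$ requires a careful cycle-structure argument analogous to Lemmas~\ref{lem:2-1}--\ref{lem:2-3} but for the prime $3$, tracking how the four fixed points and four fixed blocks of a $(4,4)$-element sit inside the subdesign and interact with $3$-cycles. The rank bound by contrast should follow routinely from the fixed-point ceiling of $4$ via the stabiliser-chain technique already deployed for $p=2$, so the delicate step is confirming that no element of order $9$ can have a cycle type consistent with the only permitted fixed-point counts $1$ and $4$.
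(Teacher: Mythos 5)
Your opening count contains an arithmetic slip that undermines both halves of the plan. From Lemma~\ref{lem:fix-bi}(b) with $\rbf_B=0$ the relation is $f=\sbf_B(\sbf_B-1)/2+1$, so $\sbf_B=4$ gives $f=7$, not $f=4$: the admissible pairs are $(f,\sbf_B)=(1,1)$ or $(7,4)$, i.e.\ a nontrivial $3$-element fixes $1$ or $7$ points of $\Pmc$, and $1$ or $4$ points of any block it fixes setwise (this is exactly the claim in the paper's proof). With your incorrect values $\{1,4\}$ the exponent step fails even on its own terms: the cycle types $1^4\,9^{13}$ and $1^1\,3^1\,9^{13}$ on $121$ points have $|\fix_\Pmc(x)|$ and $|\fix_\Pmc(x^3)|$ both in $\{1,4\}$, so no contradiction arises and order $9$ is not excluded. (Also $|\fix_\Pmc(x^3)|-|\fix_\Pmc(x)|$ is \emph{three} times the number of $3$-cycles of $x$, not twice, so the parity argument is spurious.) Ironically, with the correct values $\{1,7\}$ your cycle-count idea does work: writing $121=f_1+3f_3+9f_9$ with $f_1\in\{1,7\}$ and $f_1+3f_3\in\{1,7\}$ forces $9f_9\in\{114,120\}$, neither of which is divisible by $9$.

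The rank bound inherits the same defect. Your key claim that ``any subgroup fixing $5$ or more points must be trivial'' is false, since nontrivial $3$-elements may fix $7$ points of $\Pmc$; a stabiliser chain run on $\Pmc$ with the correct ceiling of $7$ needs a third step and yields at best $|P|\leq 3^3$, and it degenerates further when point stabilisers are trivial but orbits are long. The repair --- and this is what the paper does --- is to run the chain inside a fixed block: since $\gcd(121,3)=1$ (not ``since $v$ is odd''), $P$ stabilises some block $B$ setwise; it acts faithfully on $[B]$ because a kernel element would fix $16>7$ points; and every nontrivial element fixes exactly $1$ or $4$ points of $[B]$. As $|[B]|=16$ and neither $15$ nor $12$ is divisible by $9$, $P$ has an orbit $\Delta$ of length $3$ in $[B]$; for $\alpha\in\Delta$ the normal subgroup $P_\alpha$ fixes $\Delta$ pointwise together with a $P$-fixed point, hence at least $4$ points, so either $P_\alpha=1$ (giving $|P|=3$) or $P_\alpha$ fixes exactly $4$ points of $[B]$, has its own orbit of length $3$ there, and any two-point stabiliser fixes at least $7>4$ points of $[B]$, hence is trivial, giving $|P|=9$. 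Note finally that the paper needs no separate exponent argument: in the order-$9$ case $P$ acts faithfully on the union of two orbits of length $3$, so $P$ embeds in a Sylow $3$-subgroup of $\Sym(\Delta)\times\Sym(\Delta')$, which is $\C_3\times\C_3$, and elementary abelianness follows at once.
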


\begin{proof}
Suppose that $x\in\Aut(\Dmc)$ with $o(x)=3^{a}\geq 3$ fixing $f$ points. We claim that $f\in \{1,7\}$. Since $3$ does not divide $v=121$, $x$ fixes at least one point, and so by Lemma \ref{lem:fix}, $x$ has to fix a block, say $B$. Let $s:=\sbf_{B}=|\fix_{[B]}(x)|$. Since also $3$ does not divide $k=16$, it follows that $s>0$. Note that $k-2=16-2=14$ is not a square, so Lemma~\ref{lem:fix-bi}(d) implies that $s^{2}-s\leq 14$, and since  $s \equiv 16\equiv 1 \mod 3$, we must have $s\in\{1,4\}$, and since $f=s(s-1)/2+1$ by Lemma~\ref{lem:fix-bi}(b), we conclude that $f$ is $1$ or $7$, proving the claim.

Now let $P$ be a Sylow $3$-subgroup of $\Aut(\Dmc)$ and assume that $P\ne1$. Since $\gcd(v,3)=1$, $P$ fixes setwise at least one block of $\Dmc$, say $P$ fixes $B$. We have just shown that all nontrivial $3$-elements fix at most $7$ points, and hence $P$ acts faithfully on $[B]$.   Moreover, we showed that each nontrivial $3$-element fixes either $1$ or $4$ points of $B$.  Since $\gcd(|B|,3)=1$ this implies that $P$ fixes either 1 or 4 points of $[B]$. In either case $P$ must have at least one orbit $\Delta$ in $[B]$ of length 3 (since neither $16-1$ nor $16-4$ is divisible by $9$). Let $\alpha\in\Delta$. Then $|P:P_\alpha|=3$ and $P_\alpha$ fixes at least $4$ points of $B$. If $P_\alpha=1$, then  $|P|=3$, so suppose that $P_\alpha\ne 1$. Since no nontrivial element of $P_\alpha$ fixes more than $4$ points of $B$, it follows that $|\fix_{[B]}(P_\alpha)|=4$ and hence $P_\alpha$ has at least one orbit in $B$ of length $3$. For $\beta$ in this orbit, $P_{\alpha,\beta}$ fixes more than $4$ points of $B$ and hence must be trivial. Thus in this case, $|P|=9$, and $P$ has at least two orbits of length 3 in $[B]$, say $\Delta$ and $\Delta'$. We have shown moreover that the stabiliser of a point of $\Delta'$ is still transitive on $\Delta$, and hence that $P$ acts faithfully on $\Delta'\cup\Delta$. It follows that $P\cong\C_3^2$.
\end{proof}

\begin{lemma}\label{lem:121-sp}
Let $\Dmc=(\Pmc,\Bmc)$ be a biplane with parameters $(121,16,2)$. Then a Sylow $p$-subgroup of $\Aut(\Dmc)$ with $p\in\{5,7,11,13\}$ is cyclic of order at most $p$. Moreover, the cycle type of a $p$-element $x\in \Aut(\Dmc)$ on $\Pmc$ and on a  fixed block (if such exists) is given in {\rm Table~\ref{tbl:121-sp}}.
\end{lemma}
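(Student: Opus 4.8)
The plan is to mirror the structure of the proof of Lemma~\ref{lem:121-s3} but applied to each prime $p\in\{5,7,11,13\}$ in turn, using the fixed-point machinery from Section~\ref{sec:fix}. First I would take a nontrivial $p$-element $x\in\Aut(\Dmc)$ of order $p^a\geq p$ and determine its fixed-point count $f$. Since $p\nmid v=121$ for each such $p$, Lemma~\ref{lem:fix} guarantees that $x$ fixes at least one point and hence a block $B$, so I may set $s:=\sbf_B=|\fix_{[B]}(x)|$. Because $p\nmid k=16$, the count $p\mid(k-s)$ forces $s>0$. As $k-2=14$ is not a square, Lemma~\ref{lem:fix-bi}(d) gives $s(s-1)\leq 14$, so $s\leq 4$. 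The congruence $s\equiv k\equiv 16 \pmod p$ then pins down the admissible values of $s$ for each prime: for $p=5$ one needs $s\equiv 1\pmod 5$ with $s\in\{1,\dots,4\}$, so $s=1$; for $p=7$, $s\equiv 2\pmod 7$ forces $s=2$; for $p=11$, $s\equiv 5\pmod{11}$ forces $s=4$ (since $16\equiv 5$); and for $p=13$, $s\equiv 3\pmod{13}$ forces $s=3$. From $f=s(s-1)/2+1$ (Lemma~\ref{lem:fix-bi}(b), noting $\rbf_B=0$ as $p$ is odd) I would then read off $f$ in each case, giving the fixed-point data in the first columns of the table.

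Next I would bound the Sylow $p$-subgroup itself. The key leverage is that the above analysis applies to \emph{every} nontrivial $p$-element, so each such element fixes at most $4$ points of $\Pmc$ (indeed $f\leq s(s-1)/2+1\leq 7$), and crucially at most $s\leq 4$ points of any fixed block. Let $P$ be a Sylow $p$-subgroup and assume $P\neq 1$. Since $\gcd(v,p)=1$, $P$ fixes a block $B$ setwise, and since no nontrivial $p$-element can fix all $16=|B|$ points, $P$ acts faithfully on $[B]$. I would then run the orbit-counting argument as in Lemma~\ref{lem:121-s3}: the number of fixed points of $P$ on $[B]$ is at most $4$, so $|[B]|-|\fix_{[B]}(P)|\geq 12$ lies in nontrivial $P$-orbits; for $p\in\{5,7,11,13\}$ a single orbit has length $p$, and since $12<2p$ for $p\in\{7,11,13\}$ while $12<2\cdot 5+\,$(a fixed-point check) handles $p=5$, one sees $P$ cannot contain $P_\alpha\neq1$ for a point $\alpha$ in a minimal orbit without exceeding the fixed-point bound. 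This forces $|P|=p$, i.e.\ $P$ is cyclic of order $p$. The cleanest uniform phrasing is: a minimal nontrivial $P$-orbit $\Delta$ in $[B]$ has length $p$, the stabiliser $P_\alpha$ for $\alpha\in\Delta$ fixes at least $4$ points, and since a nontrivial $P_\alpha$ would again fix at most $4$ and have all further orbits of length $p$, counting $|[B]|=16$ points leaves no room unless $P_\alpha=1$.

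For the cycle-type column of Table~\ref{tbl:121-sp}, I would observe that a $p$-element $x$ of order $p$ has all nontrivial cycles of length exactly $p$ on both $\Pmc$ and on $[B]$. On $\Pmc$ the cycle type is $1^f\,p^{(121-f)/p}$, and on the fixed block $B$ it is $1^s\,p^{(16-s)/p}$; substituting the values $(f,s)$ found above yields the entries: $p=5$ gives $1^1\,5^{24}$ on points and $1^1\,5^3$ on $B$; $p=7$ gives $f=2$ so $1^2\,7^{17}$ on points and $1^2\,7^2$ on $B$; $p=11$ gives $f=7$ so $1^7\,11^{\,(121-7)/11}=1^7\,11^{10}$ on points and $1^4\,11^1$ on $B$; $p=13$ gives $f=s(s-1)/2+1=4$ so $1^4\,13^9$ on points and $1^3\,13^1$ on $B$. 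I would verify in each case that $p\mid(121-f)$ and $p\mid(16-s)$, which is automatic from the congruences.

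The main obstacle I anticipate is the uniform treatment of the orbit-counting step, since the slack between $|B|=16$ and the orbit length $p$ varies with $p$ and the $p=5$ case is the tightest (two orbits of length $5$ could a priori fit in $16-\mathit{fix}$). I would handle this by carefully using the exact fixed-point value $s$ on $B$: for $p=5$, $s=1$ means $P$ fixes exactly one point of $[B]$, leaving $15=3\cdot5$ points, which could split as several $5$-orbits, so here I must argue more delicately that a nontrivial point-stabiliser $P_\alpha$ would fix more than $s=1$ points of $B$ and thus contradict the per-element bound $s=1$ derived for order-$5$ elements. The cleanest resolution is to note that any nontrivial $P_\alpha$ is itself a $p$-element fixing $\alpha$, hence fixes exactly $s$ points of any block it stabilises, and compare this with the forced structure; this is precisely the mechanism that makes $|P_\alpha|=1$ and gives $|P|=p$ throughout.
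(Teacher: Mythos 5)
Your treatment of $p\in\{5,7,13\}$ is essentially the paper's own argument and is sound: the congruence $s\equiv 16\pmod p$ together with $s(s-1)\leq 14$ pins down $(s,f)$, the fixed-point bounds give faithfulness of a Sylow $p$-subgroup $P$ on a fixed block, and $P$ then has an orbit $\Delta$ of length exactly $p$ in that block. For the final step, the clean mechanism you are groping for is this: the image of $P$ in $\Sym(\Delta)$ is a transitive $p$-group of degree $p$, hence has order $p$, so the kernel of the action has index $p$ and therefore \emph{equals} the point stabiliser $P_\alpha$; thus a nontrivial $P_\alpha$ would contain a $p$-element fixing all $p\geq 5$ points of $\Delta$, exceeding the per-element bound $s\leq 4$. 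This is the content of the paper's (equally terse) assertion, and stating it explicitly closes the fuzziness in your orbit-counting paragraph.

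The $p=11$ case, however, is genuinely wrong, and the error propagates into your table. Your uniform premise ``$p\nmid v=121$'' fails for $p=11$, since $121=11^2$; so you cannot conclude that an $11$-element fixes a point, and in fact the opposite is true. Your own congruence already signals this: $s\equiv 16\equiv 5\pmod{11}$ has no solution with $s(s-1)\leq 14$ (certainly not $s=4$, as $4\not\equiv 5\pmod{11}$), which correctly reflects that an $11$-element fixes no block at all. The entries you propose also fail elementary counting: $1^{7}\,11^{10}$ accounts for $117\neq 121$ points (indeed $11\nmid 121-7$), and $1^{4}\,11^{1}$ accounts for $15\neq 16$ points of a block. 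The correct entries (the paper's Table~\ref{tbl:121-sp}) are $s=f=0$ with cycle type $11^{11}$, and establishing them needs two ingredients absent from your proposal: Proposition~\ref{cor:p2}, which uses $v=p^2$ to show that every element of order $11$ is fixed-point-free (hence fixes no block, by Lemma~\ref{lem:fix}), so that $P$ acts semiregularly on $\Pmc$; and Corollary~\ref{cor:121-t}, which rules out $|P|=11^2$, since a semiregular group of order $121$ would be regular, making $\Dmc$ a transitive biplane, and the non-existence of such biplanes rests on the difference-set argument via Lander's theorem. Without Corollary~\ref{cor:121-t}, nothing in the fixed-point machinery excludes a regular elementary abelian Sylow $11$-subgroup of order $121$ (this is exactly what a difference set would produce), so the bound $|P|\leq 11$ for $p=11$ cannot be obtained by the orbit-counting scheme you apply to the other primes.
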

\begin{table}[h]
  \centering
  \caption{Cycle type of $p$-elements $x$ with $p\in\{5,7,11,13\}$ in Lemma~\ref{lem:121-sp}.}\label{tbl:121-sp}
  \begin{tabular}{lllll}
    \hline
    $p$ & $s$ & $f$ & Cycle type on $\Pmc$ & Cycle type on a fixed block $B$ \\
    \hline
    $5$& $1$ & $1$ & $1^{1}5^{24}$ & $1^{1}5^{3}$\\
    $7$& $2$&$2$ & $1^{2}7^{17}$& $1^{2}7^{2}$\\
    $11$& $0$ & $0$& $11^{11}$ & $x$ fixes no blocks  \\
    $13$& $3$& $4$ & $1^{4}13^{9}$& $1^{3}13^{1}$\\
    \hline
  \end{tabular}
\end{table}

\begin{proof}
Suppose that $x\in\Aut(\Dmc)$ with $o(x)=p^{a}>p$ fixing $f$ points with $p\in\{5,7,11,13\}$. We use the same argument as in Lemma~\ref{lem:121-s3}.

Assume first that $p\neq 11$. Then since $p$ is coprime to $121$, $x$ fixes at least one point, and so by Lemma \ref{lem:fix}, $x$ fixes also a block, say $B$. Since $p$ does not divide $k=16$ and $k-2=16-2=14$ is not a square, it follows from Lemma~\ref{lem:fix-bi}(d) that
$s^{2}-s\leq 14$, where $s:=\sbf_{B}=|\fix_{[B]}(x)|$. Since $s \equiv 16\mod p$, we must have
\begin{align*}
  \begin{array}{ll}
    s\in\{1,6,11,16\}, & \hbox{if $p=5$;} \\
    s\in\{2,9\}, & \hbox{if $p=7$;} \\
    s\in\{3\}, & \hbox{if $p=13$.}
  \end{array}
\end{align*}
By  Corollary~\ref{cor:fix-bi-bound},  $f=s(s-1)/2+1$ and $f\leq k/2=8$, and we obtain $s$ and $f$ as in the  second and third columns of Table~\ref{tbl:121-sp}, and hence every $p$-element has exactly $s$ fixed points in $B$ where $s$ is listed in the second column  Table~\ref{tbl:121-sp}..

Now let $P$ be a Sylow $p$-subgroup of $\Aut(\Dmc)$ where $p\in\{5,7,13\}$. Then $P$ acts faithfully on $B$. Since $p^{2}$ does not divide $k-s$, it follows that $P$ has an orbit $\Delta$ of length $p$ in $[B]$. Let $\alpha\in \Delta$. If $P_{\alpha}\neq 1$, then $\Aut(\Dmc)$ would have a $p$-element fixing more than $s$ points in $B$, which is a contradiction. Thus $P_{\alpha}=1$, and hence $|P|=|P:P_{\alpha}|=|\Delta|=p$. Therefore, $P$ is cyclic of order $p$ and the  cycle types of the non-identity elements of $P$ on $\Pmc$ and on $B$ are as in the fourth and fifth columns of Table \ref{tbl:121-sp}, respectively.

Assume now that $p=11$. By Proposition~\ref{cor:p2}, each element of order $p$ has no fixed points, and hence $x$ has   $f=0$ fixed points. Let $P$ be a Sylow $11$-subgroup of $\Aut(\Dmc)$. Then $P$ acts semiregularly on $\Pmc$. If  $|P|=11^{2}$, then  $\Dmc$ is transitive which contradicts Corollary~\ref{cor:121-t}. Thus, $|P|=11$,  and every $11$-element has cycle type $11^{11}$, as in Table~\ref{tbl:121-sp}.
\end{proof}

\subsection{Proof of Theorem~\ref{thm:main-121}}\label{sec:proof-thm-121}
Part (a) follows from Theorem~\ref{thm:79}. To prove part (b), 
let $\Dmc=(\Pmc, \Bmc)$ be a biplane with parameters $(121,16,2)$. Then by Lemma~\ref{lem:Autprims}, the only primes that could possibly divide $|\Aut(\Dmc)|$ are $2$, $3$, $5$, $7$, $11$ and $13$. The divisibility conditions now follow from Proposition~\ref{prop:2bound} and Lemmas~\ref{lem:121-s3} and~\ref{lem:121-sp}.

\section{Biplanes and product action}\label{sec:ProdAction}

As mentioned in the introduction, for each of the primitive types in Table~\ref{tbl:prim} except $\HS$ and $\SD$, there is some primitive group of that type which preserves a homogeneous cartesian decomposition.
In this section, we study biplanes $\Dmc=(\Pmc,\Bmc)$ which admit a point-primitive group $G$ of automorphisms
preserving such a decomposition $\Pmc=\Gamma^d$ for some $d\geq2$. Then $G$ is permutationally isomorphic to a subgroup of a wreath product $W=H\wr \S_d$ in its product action on $\Gamma^d$, for some primitive subgroup $H\leq \Sym(\Gamma)$  such that
$G\cap (H\times (H\wr \S_{d-1}))$ maps onto $H$ under the first coordinate projection  (\cite[Theorems 5.13 and 5.18]{b:Praeger-18}).  Moreover, if $G$ has type $\HC, \CD$ or $\PA$, then $G$ contains $\Soc(H)^d$, where $\Soc(H)$ is the socle of $H$
(\cite[Theorems 7.7 and 11.13]{b:Praeger-18}). Thus in these cases $G\cap H^d$ contains $\Soc(H)\times 1^{d-1}$, which in particular is not semiregular on $\Pmc$.

We make the following hypotheses which establish our notation, and we note that the group $L$ defined there is not semiregular
for groups $G$ of type $\HC, \CD$ or $\PA$ (that is, some non-identity element of $L$ fixes a point of $\Pmc$). Our first result Lemma \ref{lem:main} can be viewed as a reduction result when applied to groups of these types.

\begin{hypothesis}\label{hypo}
Let $\Dmc=(\Pmc,\Bmc)$ be a biplane with parameters $(v, k, 2)$ and $k\geq4$, and let $G\leq \Aut(\Dmc)$ be point-primitive
preserving a cartesian decomposition  $\Pmc = \Gamma^d$, with $d\geq 2$ and $|\Gamma|=c\geq 5$, so that
$G\leq W$, where $W=H \wr \S_d$ with $H\leq \Sym(\Gamma)$ primitive. Let $\sigma: H\times (H\wr \S_{d-1}) \to H$ denote the projection to the first coordinate, and assume that $\sigma$ maps $G\cap (H\times (H\wr \S_{d-1}))$ onto $H$. Let $L = G \cap (H \times 1^{d-1})$,
so $L\cong \sigma(L)\unlhd H$.
\end{hypothesis}

\begin{lemma}\label{lem:main}
Suppose that {\rm Hypothesis \ref{hypo}} holds.  Then either $L$ is semiregular (on $\Pmc$ and on $\Gamma$), or $d=2$ and $k=\lceil\sqrt{2}\,c\rceil$ with $c\geq 11$ and $c\ne 28$.
\end{lemma}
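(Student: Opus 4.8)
The plan is to assume that $L$ is \emph{not} semiregular and to deduce the second alternative. First I would record that the two semiregularity conditions in the statement coincide: since $\sigma$ restricts to an isomorphism $L\to\sigma(L)\unlhd H$, and any $x=(h,1,\dots,1)\in L$ acts on $\Pmc=\Gamma^d$ by fixing $(\gamma_1,\dots,\gamma_d)$ exactly when $h$ fixes $\gamma_1$, we have $\fix_{\Pmc}(x)=\fix_{\Gamma}(h)\times\Gamma^{d-1}$. Thus a non-identity element of $L$ fixes a point of $\Pmc$ if and only if it fixes a point of $\Gamma$, so the failure of semiregularity yields a non-identity $x=(h,1,\dots,1)\in L$ with $\fix_{\Gamma}(h)\neq\emptyset$, and then
\[
f:=|\fix_{\Pmc}(x)| = |\fix_{\Gamma}(h)|\cdot c^{d-1} \geq c^{d-1}.
\]

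Next I would feed this lower bound into the universal fixed-point estimate. By Corollary~\ref{cor:fix-bi-bound} we have $f\leq k+\sqrt{k-2}$, while $v=c^d$ and \eqref{eq:kbound} give $k<\sqrt{2}\,c^{d/2}+1$ and hence $\sqrt{k-2}<2^{1/4}c^{d/4}$. Combining,
\[
c^{d-1} \leq f \leq k+\sqrt{k-2} < \sqrt{2}\,c^{d/2}+1+2^{1/4}c^{d/4}.
\]
For $d\geq 3$ I would use $c^{d-1}=c^{d/2-1}\,c^{d/2}\geq c^{1/2}c^{d/2}\geq\sqrt{5}\,c^{d/2}$ (as $c\geq 5$ and $d/2-1\geq 1/2$); since $\sqrt{5}>\sqrt{2}$ and the remaining terms $1+2^{1/4}c^{d/4}$ are of strictly lower order in $c^{d/2}$, the displayed inequality fails (one checks the gap is already decisive at $d=3$, $c=5$ and only widens as $c$ or $d$ grows), a contradiction. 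Hence $d=2$.

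With $d=2$, Lemma~\ref{lem:basic} gives $k=\lceil\sqrt{2}\,c\rceil$, and the hypothesis gives $c\geq 5$. To restrict $c$ further I would invoke the biplane equation \eqref{eq:basic}, which for $v=c^2$ reads $k(k-1)=2(c^2-1)$, equivalently $(2k-1)^2=8c^2-7$; so $8c^2-7$ must be a perfect square. A direct check of $5\leq c\leq 10$ shows that none of these values makes $8c^2-7$ a square, whence $c\geq 11$; and since $8\cdot 28^2-7=6265$ is not a square, $c\neq 28$. This gives the stated conclusion $d=2$, $k=\lceil\sqrt{2}\,c\rceil$ with $c\geq 11$ and $c\neq 28$.

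The identification of $\fix_{\Pmc}(x)$ and the final Diophantine bookkeeping for $c$ are routine. I expect the main obstacle to be the estimate forcing $d=2$: the only leverage available is a \emph{single} non-identity element $x$ guaranteed by non-semiregularity, so one must verify that the crude bound $f\geq c^{d-1}$, fed through $f\leq k+\sqrt{k-2}$, already excludes every $d\geq 3$ uniformly in $c\geq 5$ rather than merely for large $c$; the clean inequality $c^{d-1}\geq\sqrt{5}\,c^{d/2}$ together with $\sqrt{5}>\sqrt{2}$ is what makes this uniform.
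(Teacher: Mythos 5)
Your proposal is correct and follows essentially the same route as the paper: a non-semiregular element of $L$ fixes at least $c^{d-1}$ points of $\Pmc$, which combined with the bound $f\leq k+\sqrt{k-2}$ of Corollary~\ref{cor:fix-bi-bound} and the equation $k(k-1)=2c^{d}-2$ forces $d=2$, after which the excluded values of $c$ follow from integrality of $k$. The only differences are arithmetic bookkeeping: you rule out all $d\geq 3$ uniformly with a slightly sharper estimate (the paper's coarser bound $f<2k-2$ leaves the residual case $d=3$, $c\in\{5,6,7\}$, which it then kills by integrality), and you phrase the exclusion of $c\leq 10$ and $c=28$ via the perfect-square condition $(2k-1)^{2}=8c^{2}-7$ rather than checking $k(k-1)=2c^{2}-2$ directly.
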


\begin{proof}
Since elements of $L$ move only the first entries of points of $\Pmc=\Gamma^d$, $L$ is semiregular on $\Pmc$ if and only if $\sigma(L)$ is semiregular on $\Gamma$. Suppose that this is not the case. Then for some $1\ne x\in L$, $\sigma(x)$ fixes a point $\alpha\in \Gamma$.
Now $x$ fixes at least $c^{d-1}$ points in $\Pmc$, namely those with $\alpha$ as their first entry. So  $|\fix_{\Pmc}(x)|\geq c^{d-1}$. By Corollary~\ref{cor:fix-bi-bound},  $|\fix_{\Pmc}(x)|\leq k+\sqrt{k-2}<2k-2$. Therefore,
\begin{align}\label{eq:lem-PA-as-k-1}
    k> \frac{c^{d-1}+2}{2}.
\end{align}
On the other hand, by Lemma~\ref{lem:basic},  $k(k-1)=2(v-1)$, that is to say,
\begin{align}\label{eq:lem-PA-as-k-2}
    k(k-1)=2c^{d}-2,
\end{align}
and so \eqref{eq:lem-PA-as-k-1} implies that
\[
    8c^d > 8(c^{d}-1)=4k(k-1)> (c^{d-1}+2)(c^{d-1}+2)> c^{2d-2}.
\]
Since $c\geq 5$, this implies that either (i) $d=2$, or (ii) $d=3$ and $c=5,6,7$. In case (ii), there is no integer $k$ satisfying \eqref{eq:lem-PA-as-k-2}. Thus $d=2$, and Lemma \ref{lem:basic} implies that $k=\lceil\sqrt{2}\,c\rceil$. It is easy to check that, for $c\leq 10$ and $c=28$, no integer $k$ satisfies \eqref{eq:lem-PA-as-k-2}, and the proof is complete.
\end{proof}


By Lemma~\ref{lem:main}, if {\rm Hypothesis \ref{hypo}} holds and $L$ is not semiregular, then $d=2$ so $\Pmc:=\Gamma^{2}$ and  $v=c^{2}$. We introduce more notation for the case $d=2$.
For $i=1,2$, let $\pi_{i}$ be the projection map $\Gamma^{2}\to \Gamma$ such that $\pi_1:(\alpha,\beta)\rightarrow \alpha$ and $\pi_2:(\alpha,\beta)\rightarrow \beta$, and set
\begin{align}\label{eq:d2-s1s2}
P_{1}&=\{\{p_{1},p_{2}\}\mid p_{1}, p_2 \in\Gamma^2, p_1\ne p_2, \text{ and } \pi_{1}(p_{1})=\pi_{1}(p_{2})\};\\
P_{2}&=\{\{p_{1},p_{2}\}\mid p_{1}, p_2 \in\Gamma^2, p_1\ne p_2,  \text{ and } \pi_{2}(p_{1})=\pi_{2}(p_{2})\}.
\end{align}
Note that $P_{1}\cap P_{2}=\emptyset$. Our next result looks at the case $d=2$ (whether or not $L$ is semiregular).

\begin{lemma}\label{lem:threepoints}
Suppose that {\rm Hypothesis \ref{hypo}} holds with  $d=2$, and let $B\in\Bmc$. Then
\begin{enumerate}[\rm (a)]
\item  exactly $2(c-1)$ of the unordered pairs of points of $B$ lie in $P_1\cup P_2$;
    \item  there is a $3$-element subset $S$ of $B$, such that all pairs from $S$ lie in $P_1$ or all pairs from $S$ lie in $P_2$;
    \item  for all pairwise distinct points $\alpha,\beta,\gamma\in\Gamma$,  $\sigma(L)_{\alpha,\beta,\gamma}=1$.
\end{enumerate}
\end{lemma}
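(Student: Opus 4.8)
The plan is to work throughout in the product-action coordinates on $\Pmc=\Gamma^2$, writing a typical part of $\Gamma_1$ as a ``column'' $\{\alpha\}\times\Gamma$ and a typical part of $\Gamma_2$ as a ``row'' $\Gamma\times\{\beta\}$, and setting $n(\gamma)=|B\cap\gamma|$ for each part $\gamma$. A pair of points of $B$ lies in $P_1$ (resp.\ $P_2$) precisely when both points lie in a common part of $\Gamma_1$ (resp.\ $\Gamma_2$), so the number of pairs of $B$ inside $P_1\cup P_2$ equals $\sum_\gamma\binom{n(\gamma)}{2}$, the sum running over all parts of $\Gamma_1$ and of $\Gamma_2$. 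For part (a) I would combine this with a double count and block-transitivity. Since $G$ is point-primitive it is point-transitive, hence block-transitive by Block's theorem \cite{a:Block-67}; and since $G$ preserves the cartesian decomposition it maps $P_1\cup P_2$ to itself (an element either fixes both coordinates, preserving each $P_i$, or swaps them, interchanging $P_1$ and $P_2$). Hence the number $N$ of pairs of a block lying in $P_1\cup P_2$ is the same for every block. Counting incident pairs $(\{p,q\},B)$ with $\{p,q\}\in P_1\cup P_2$ and $\{p,q\}\subseteq B$ in two ways then finishes this: there are $|P_1|+|P_2|=2c\binom{c}{2}=c^2(c-1)$ such pairs (the sets are disjoint), each in exactly $\lambda=2$ blocks, giving $2c^2(c-1)$ incidences, while summing over the $v=c^2$ blocks gives $c^2N$; thus $N=2(c-1)$.

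For part (b) I would argue by contradiction, supposing that every part of $\Gamma_1$ and of $\Gamma_2$ meets $B$ in at most two points. For $i=1,2$ let $m_i$ and $t_i$ be the numbers of parts of $\Gamma_i$ meeting $B$ in exactly two and in exactly one point. Since $n(\gamma)\le2$ makes $\binom{n(\gamma)}{2}$ equal to $1$ or $0$ according as $n(\gamma)=2$ or not, part (a) gives $m_1+m_2=2(c-1)$; and counting the $k$ points of $B$ according to the part of $\Gamma_i$ containing them gives $2m_i+t_i=k$ for each $i$. Adding these relations yields $t_1+t_2=2k-4(c-1)$. However, by \eqref{eq:basic} and Lemma~\ref{lem:basic} the parameters satisfy $k(k-1)=2(c^2-1)$ with $k=\lceil\sqrt2\,c\rceil$, and this has no integer solution for $5\le c\le 10$ (as already noted in the proof of Lemma~\ref{lem:main}); so $c\ge 11$, whence $k<\sqrt2\,c+1<2(c-1)$ and therefore $t_1+t_2<0$, a contradiction. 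Consequently some part, a column or a row, contains at least three points of $B$, and any three of them form the required set $S$: all its pairs lie in $P_1$ if the part is a column, and all in $P_2$ if it is a row.

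For part (c) I would invoke the fixed-point bound of Corollary~\ref{cor:fix-bi-bound}. Let $\bar x\in\sigma(L)$ fix three distinct points $\alpha,\beta,\gamma\in\Gamma$, and let $x\in L$ be the element with $\sigma(x)=\bar x$, so that $x$ acts on $\Gamma^2$ only in the first coordinate, by $(\delta,\epsilon)\mapsto(\bar x\delta,\epsilon)$. Then $x$ fixes every point of $\{\alpha,\beta,\gamma\}\times\Gamma$, so $|\fix_\Pmc(x)|\ge 3c$. If $x\ne 1$, then Corollary~\ref{cor:fix-bi-bound} gives $|\fix_\Pmc(x)|\le k+\sqrt{k-2}$; but with $k=\lceil\sqrt2\,c\rceil$ one checks $k+\sqrt{k-2}<3c$ for all $c\ge 5$, a contradiction. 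Hence $\bar x=1$, that is, $\sigma(L)_{\alpha,\beta,\gamma}=1$.

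I expect the genuine obstacle to be part (b). Parts (a) and (c) are essentially immediate once one has block-transitivity together with the double count, respectively the fixed-point bound of Corollary~\ref{cor:fix-bi-bound}. The force of (b) is that part (a) rigidly pins down the number $m_1+m_2$ of doubly-occupied parts, so that under the contradiction hypothesis the point-count identities $2m_i+t_i=k$ become incompatible with the inequality $k<2(c-1)$; the delicate point is that this inequality is not available for free but rests on $c\ge 11$, which is itself an arithmetic consequence of $k(k-1)=2(c^2-1)$ having no small integer solution, rather than of any group theory.
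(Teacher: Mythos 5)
Your proposal is correct, and parts (a) and (c) follow the paper essentially verbatim: (a) is the same double count of incident pairs $(p,B')$ with $p\in P_1\cup P_2$, using block-transitivity and the $G$-invariance of $P_1\cup P_2$; (c) is the same fixed-point argument via Corollary~\ref{cor:fix-bi-bound} and \eqref{eq:kbound}, which the paper phrases as: each non-identity $x\in L$ fixes $|\fix_{\Gamma}(\sigma(x))|\cdot c\leq k+\sqrt{k-2}<2\sqrt{2}\,c$ points of $\Pmc$, so $|\fix_{\Gamma}(\sigma(x))|\leq 2$. Part (b) is also the same strategy --- assume no part of $\Gamma_1$ or $\Gamma_2$ meets $B$ in three points and count --- but your execution is tighter than the paper's. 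The paper encodes $B$ as a bipartite graph on $\Gamma\times\{1,2\}$ (your $m_i$, $t_i$ are its valency counts $k_{i2}$, $k_{i1}$) and then writes $2k\geq 2k_{12}+2k_{22}=2|P_1|+2|P_2|=2c^2(c-1)$, identifying the number $k_{i2}$ of doubly-covered vertices with the global count $|P_i|$; that identification is a slip, since $k_{i2}\leq c$, and what the preceding discussion actually yields is $k_{12}+k_{22}=2(c-1)$ by part (a) --- precisely your $m_1+m_2=2(c-1)$. From there one gets only $k\geq 2(c-1)$, which contradicts $k<\sqrt{2}\,c+1$ for $c\geq 6$ but not for $c=5$; your observation that $k(k-1)=2(c^2-1)$ has no integer solution for $5\leq c\leq 10$ (hence $c\geq 11$) closes exactly this residual case, and is a legitimate appeal to the arithmetic already recorded in the proof of Lemma~\ref{lem:main}, since it uses only the existence of the biplane with $v=c^2$ and no group theory. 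So your proof is a correct, and in part (b) a more carefully quantified, rendering of the paper's argument.
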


\begin{proof}
(a) The group $G$ leaves $P_1\cup P_2$ invariant.
Since $G$ is transitive on $\Bmc$, the number of $m$ of  pairs from $B$ that lie in $P_1\cup P_2$  is independent of $B$.
Moreover $m>0$ since each pair of points lies in two blocks. We count the size of the following set in two ways:
\begin{align*}
    X:=\{(p,B')\mid p\in P_{1}\cup P_{2}, B'\in\Bmc, \text{ and } p\subset B'\}.
\end{align*}
It follows from \eqref{eq:d2-s1s2} that $|P_1\cup P_2| =c^{2}(c-1)$. Also each $p\in P_1\cup P_2$ occurs in exactly two
pairs in $X$ since $\Dmc$ is a biplane, so $o(x)=2c^2(c-1)$. On the other hand, each block $B$ occurs in exactly $m$ pairs in $X$, and so  $2c^{2}(c-1)=o(x)=m|\Bmc|=mv$, and since $v=c^{2}$, we have
$m=2(c-1)$.

(b)  Define a bipartite graph $\Gmc(B)$ with vertex set $\Gamma\times\{1,2\}$, and with edges all the pairs of the form
$\{(\alpha,1), (\beta,2)\}$ for which the point $(\alpha,\beta)\in B$. So $\Gmc(B)$ has exactly $k$ edges. Each pair from $P_1$ has the form
$\{(\alpha,\beta_1), (\alpha,\beta_2)\}$ with $\beta_1\ne \beta_2$, and determines  two edges of $\Gmc(B)$, namely $\{(\alpha,1), (\beta_i,2)\}$ for $i=1, 2$. Thus $(\alpha, 1)$ has valency at least $2$ in $\Gmc(B)$. Similarly a $3$-subset $S\subset B$ such that all pairs from $S$ lie in $P_1$ has the form $S=\{(\alpha,\beta_1), (\alpha,\beta_2), (\alpha,\beta_3)\}$ with the $\beta_i$ pairwise distinct, and determines three edges of $\Gmc(B)$, namely $\{(\alpha,1), (\beta_i,2)\}$, for $1\leq i\leq 3$. Conversely each vertex $(\alpha,1)$ of valency at least $3$ in $\Gmc(B)$ gives rise to at least one such 3-subset $S$. Similarly a
$3$-subset $S\subset B$ such that all pairs from $S$ lie in $P_2$ determines a vertex $(\beta,2)$ of valency at least 3 in $\Gmc(B)$, and each such vertex determines at least one such $3$-subset $S$.  Suppose that part (b) is false. Then each vertex of $\Gmc(B)$ has valency at most $2$. For $i, j\in \{ 1,2\}$, let $k_{ij}$ be the number of vertices of $\Gamma\times\{i\}$ of valency $j$. Then for each $i$,  $k=k_{i1}+2k_{i2}\geq 2k_{i2}$, and from our discussion above, $k_{i2}=|P_i|$. Thus
\[
2k\geq 2k_{12}+2k_{22}=  2|P_1|+2|P_2| = 2 c^2(c-1).
\]
However, by Lemma~\ref{lem:basic}, $k<\sqrt{2}\, c+1$ with $c\geq4$, and hence $\sqrt{2}\, c+1 > c^2(c-1)$, which is a contradiction. This proves part (b).

(c) Consider $L$ as in Hypothesis~\ref{hypo}. If $L=1$ there is nothing to prove, so assume that $L\ne 1$. Let $(x,1)\in L$ with $x\ne 1$. Then the fixed points $(\alpha,\beta)$ of $(x,1)$ in $\Pmc$ are precisely those for which $\alpha$ is a fixed point for $x$ in $\Gamma$. Thus $(x,1)$ fixes $|\fix_{\Gamma}(x)|c$ points of $\Pmc$
By Corollary~\ref{cor:fix-bi-bound}, the number of fixed points of $(x,1)$ is at most $k+\sqrt{k-2}< 2k-2$, and $\eqref{eq:kbound}$ implies that  $ 2k-2 <2\sqrt{2}\,c$. Hence $|\fix_{\Gamma}(x)|< 2\sqrt{2}<3$, so $|\fix_{\Gamma}(x)|\leq 2$, and part (c) follows.
%
\end{proof}

In particular $L\cong \sigma{L}\ne 1$. Now $\sigma(L)$ is a normal subgroup of the primitive group $H$, and hence $\sigma(L)$ is transitive on $\Gamma$.



\subsection{Automorphism groups of type  PA, HC, or CD}\label{sec:PA}

In this subsection, we prove that the group $G$  in Hypothesis~\ref{hypo} is not of type \PA, \HC\ or \CD, where these types are as in Table~\ref{tbl:prim}.

\begin{theorem}\label{thm:main-pa}
Let $\Dmc=(\Pmc,\Bmc)$ be a biplane admitting a point-primitive automorphism group $G$. Then $G$ cannot be of type \PA, \HC, or \CD.
\end{theorem}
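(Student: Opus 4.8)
The plan is to handle the three types through the structural input they share, and then to isolate the product-action case, which is the genuinely difficult one. By the remarks preceding Hypothesis~\ref{hypo}, if $G$ is of type $\PA$, $\HC$ or $\CD$ then $G$ contains $\Soc(H)^d$; hence the subgroup $L=G\cap(H\times 1^{d-1})$ of Hypothesis~\ref{hypo} contains $\Soc(H)\times 1^{d-1}$ and is not semiregular on $\Pmc$. Lemma~\ref{lem:main} then forces $d=2$, so that $\Pmc=\Gamma^2$, $v=c^2$, $k=\lceil\sqrt2\,c\rceil$, $c\geq 11$ and $c\neq 28$, and $\sigma(L)\geq\Soc(H)$ is a transitive normal subgroup of the primitive group $H$. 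Two further facts about $\sigma(L)$ will do the work: the proof of Lemma~\ref{lem:threepoints}(c) shows that every non-identity element of $\sigma(L)$ fixes at most $2$ points of $\Gamma$, and Lemma~\ref{lem:threepoints}(c) itself gives that the pointwise stabiliser in $\sigma(L)$ of any three distinct points of $\Gamma$ is trivial. Since $\Soc(H)\leq\sigma(L)$, both properties pass to $\Soc(H)$.

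I would first eliminate $\HC$ and $\CD$ together. In these cases $H$ is of type $\HS$ or $\SD$, so $\Soc(H)\cong T^m$ with $T$ nonabelian simple and $m\geq 2$, and a point stabiliser of $\Soc(H)$ in $\Gamma$ is a full diagonal subgroup. A direct computation in the two-sided (respectively diagonal) action shows that a non-identity diagonal element $(t,\dots,t)$ fixes exactly $|\Cbf_T(t)|^{m-1}$ points of $\Gamma$. The crucial point is that a nonabelian simple group has no non-identity element with a centraliser of order at most $2$: if $o(t)\geq3$ this is clear, while if $t$ is an involution with $|\Cbf_T(t)|=2$ then a Sylow $2$-subgroup of $T$ equals $\langle t\rangle$ and is self-normalising, so Burnside's normal $p$-complement theorem would give a normal subgroup of index $2$, contradicting simplicity. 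Hence $|\Cbf_T(t)|\geq3$, so the diagonal element above fixes at least $3$ points of $\Gamma$, contradicting fixity at most $2$. This disposes of $\HC$ and $\CD$.

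The remaining case $\PA$ is the hard one. Here $H$ is almost simple and $\Soc(H)=T$ is nonabelian simple, acting transitively on $\Gamma$ of degree $c$ with fixity at most $2$ and with trivial $3$-point stabilisers. The latter means $T$ acts semiregularly on the $c(c-1)(c-2)$ ordered triples of distinct points, so $|T|$ divides $c(c-1)(c-2)$; and $c$ is itself very restricted, because $k=\lceil\sqrt2\,c\rceil$ must satisfy $k(k-1)=2(c^2-1)$ by Lemma~\ref{lem:basic}, equivalently the Pell-type equation $(2k-1)^2-8c^2=-7$, whose solutions with $c\geq11$ form a sparse set. The plan is to determine which nonabelian simple groups admit a faithful transitive action of fixity at most $2$, appealing to the classification of such actions or arguing directly from the divisibility $|T|\mid c(c-1)(c-2)$ together with the classification of finite simple groups and its attendant lower bounds on indices and centraliser orders. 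The generic survivor is $T=\PSL_2(q)$ on the $q+1$ points of the projective line, where a split-torus element fixes its two rational points and every other non-identity element fixes at most one. Each candidate action is then tested against the admissible values of $c$, the divisibility condition, and the combinatorial constraints of Lemma~\ref{lem:threepoints}(a),(b): in particular the $\PSL_2(q)$ family is excluded because no prime power $q$ makes $c=q+1$ an admissible degree, and the finitely many low-degree or sporadic actions fail the divisibility or parameter tests.

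I expect the main obstacle to be precisely this last step. Reducing to $d=2$ and killing $\HC$ and $\CD$ are short, because their diagonal point stabilisers force an element with centraliser of order at least $3$, and hence with too many fixed points. For $\PA$, by contrast, there is no such structural shortcut, since the point stabiliser of $T$ can be a generic maximal subgroup realising fixity exactly $2$; the work therefore lies in controlling all almost simple fixity-$2$ actions and then matching them, case by case, against the sparse set of admissible degrees $c$ coming from the biplane parameter equation.
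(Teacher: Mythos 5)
Your reduction to $d=2$ via Lemma~\ref{lem:main}, and your extraction of the fixity bound (every non-identity element of $\sigma(L)$ fixes at most two points of $\Gamma$) from the proof of Lemma~\ref{lem:threepoints}(c), are both correct. Your elimination of types \HC\ and \CD\ is also correct and is genuinely different from the paper: the paper treats all three types uniformly, whereas you exploit the fact that for these two types the component $H$ has type \HS\ or \SD, so a point stabiliser of $\Soc(H)\cong T^m$ is a full diagonal subgroup, and a non-identity diagonal element $(t,\dots,t)$ fixes $|\Cbf_T(t)|^{m-1}$ points of $\Gamma$; your Burnside argument that a nonabelian simple group has no element with $|\Cbf_T(t)|\leq 2$ is sound, so fixity at most $2$ is violated. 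This is a legitimate and arguably slicker shortcut for those two types.

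The gap is in the \PA\ case, which is where essentially all of the difficulty of the theorem lies, and what you offer there is a plan rather than a proof. You propose to invoke a classification of transitive actions of fixity at most $2$ --- a deep CFSG-based result that you neither cite precisely nor reprove (the alternative you offer, ``arguing directly from $|T|\mid c(c-1)(c-2)$ and CFSG,'' is itself a research-level project) --- and then to eliminate the survivors by matching degrees against the equation $k(k-1)=2(c^2-1)$. Both halves are unsubstantiated. First, your claimed list of survivors is incomplete: besides $\PSL_2(q)$ on $q+1$ points, at least the Suzuki groups $\mathrm{Sz}(q)$ on $q^2+1$ points (a Zassenhaus action: $2$-transitive with trivial three-point stabilisers, hence fixity $2$, not regular, not Frobenius) satisfy every hypothesis you impose, and you never address them. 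Second, the assertion that ``no prime power $q$ makes $c=q+1$ an admissible degree'' is precisely the kind of claim that needs real work: the admissible values $c=11,23,64,134,373,781,2174,\dots$ form an infinite Pell-type sequence, and proving it avoids $\{q+1\}$ (and $\{q^2+1\}$, etc.) for all prime powers $q$ is a nontrivial Diophantine problem --- compare Proposition~\ref{prop:AS}, where the paper needs the explicit solution formulas of Lemma~\ref{lem:dio} and a delicate mod-$3$ argument just to exclude the single family $c=q^2(q^2-1)/2$ arising in the \AS\ case. The paper's proof of Theorem~\ref{thm:main-pa} avoids all of this: from trivial three-point stabilisers together with ``not regular, not Frobenius'' it deduces, via Wong's theorem (Lemma~\ref{lem:subdeg3}, packaged as Proposition~\ref{prop:stab-three}), that every suborbit of $T_\alpha$ has length at least $5$ once $c\geq 11$, and then a purely combinatorial double count of equal-coordinate pairs inside a block (the partition $S_1\cup S_2''\cup S_3$) yields $4k\leq 4(c-1)$, i.e.\ $k<c$, contradicting $k=\lceil\sqrt{2}\,c\rceil$. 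Your proposal is missing exactly this combinatorial step, and without it (or a fully executed classification-plus-Diophantine analysis) the \PA\ case remains unproved.
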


If a biplane $\Dmc$ admits a group of one of these types, then  in particular {\rm Hypothesis \ref{hypo}} holds, and in Lemma~\ref{lem:socles} (which follows from the O'Nan--Scott Theorem -- see \cite[Chapter 7.6]{b:Praeger-18}), we see that the cartesian decomposition can be chosen to give important extra information about the group $L$.

\begin{lemma}\label{lem:socles}
Suppose that $\Dmc=(\Pmc,\Bmc)$ is a biplane admitting a point-primitive automorphism group $G$  of type  $\PA, \HC$, or $\CD$.  Then
$G$ preserves a cartesian decomposition such that {\rm Hypothesis \ref{hypo}} holds, and in addition, 
$\Soc(H)$ is neither regular, nor a Frobenius group on $\Gamma$, $\Soc(G)=\Soc(H)^d$, and $L\geq\Soc(H)\times 1^{d-1}$.
\end{lemma}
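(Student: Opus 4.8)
The plan is to read off all the assertions from the O'Nan--Scott description of the three types, as codified in the product-action theory of \cite[Chapter 7.6]{b:Praeger-18}. First I would recall that a point-primitive group $G$ of type $\PA$, $\HC$ or $\CD$ preserves a cartesian decomposition $\Pmc = \Gamma^d$ with $d \geq 2$, and that by \cite[Theorems 5.13 and 5.18]{b:Praeger-18} one may choose it so that $G \leq W = H \wr \S_d$ in product action with $H \leq \Sym(\Gamma)$ primitive and with the projection condition of Hypothesis~\ref{hypo} satisfied; by \cite[Theorems 7.7 and 11.13]{b:Praeger-18} this decomposition can moreover be taken so that $\Soc(G) = \Soc(H)^d$ and $G \geq \Soc(H)^d$. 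In the three cases $H$ is respectively almost simple ($\AS$), holomorph simple ($\HS$) and simple diagonal ($\SD$) on $\Gamma$, so that $\Soc(H)$ equals $T$, $T \times T$, or $T^m$ (with $m \geq 2$) for a nonabelian simple group $T$.

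Two of the required conclusions are then immediate. From $G \geq \Soc(H)^d$ I get $\Soc(H) \times 1^{d-1} \leq \Soc(H)^d \leq G$, and since also $\Soc(H) \times 1^{d-1} \leq H \times 1^{d-1}$, this yields $\Soc(H) \times 1^{d-1} \leq G \cap (H\times 1^{d-1}) = L$, the last assertion; and $\Soc(G) = \Soc(H)^d$ is recorded above. To verify that Hypothesis~\ref{hypo} genuinely holds I still need $c = |\Gamma| \geq 5$: in each type $\Soc(H)$ acts faithfully and transitively on $\Gamma$ with a nonabelian simple group involved, and since the smallest nonabelian simple group $\A_5$ has no faithful transitive action of degree less than $5$, we get $c \geq 5$ (indeed $c \geq 60$ in the $\HC$ and $\CD$ cases).

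It remains to show that $\Soc(H)$ is neither regular nor a Frobenius group on $\Gamma$. For non-regularity I would note that the stabiliser $\Soc(H)_\delta$ of a point $\delta \in \Gamma$ is nontrivial in every case: it is the proper nontrivial subgroup $R < T$ from the definition of type $\PA$ when $H$ is $\AS$, and it is a full diagonal subgroup $\cong T$ when $H$ is $\HS$ or $\SD$ (equivalently, $|\Soc(H)| > |\Gamma|$ in these last two cases). For the Frobenius claim I would split into two arguments: when $H$ is $\AS$, $\Soc(H) = T$ is nonabelian simple and so has no proper nontrivial normal subgroup, whereas a Frobenius group must contain its proper nontrivial kernel, so $T$ cannot be Frobenius; when $H$ is $\HS$ or $\SD$, the stabiliser $\Soc(H)_\delta \cong T$ is a nonabelian simple group, and since a Frobenius complement has all Sylow subgroups cyclic or generalised quaternion it cannot contain a nonabelian simple subgroup, so again $\Soc(H)$ is not Frobenius.

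The one genuinely substantive step is the first: correctly invoking the cartesian-decomposition theory to pin down the component $H$ (and hence its socle) and to guarantee that the decomposition can be chosen with $\Soc(G) = \Soc(H)^d$ and $G \geq \Soc(H)^d$. Once the type of $H$ is identified, the regularity and Frobenius statements are routine facts about the socles of $\AS$, $\HS$ and $\SD$ primitive groups. I expect the main care to be needed in matching the O'Nan--Scott type of $G$ with that of its component $H$, and in ensuring that the chosen decomposition is the one for which the socle splits as a direct $d$-th power.
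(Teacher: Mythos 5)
Your proposal is correct and follows essentially the same route as the paper: the paper states this lemma without a written proof, attributing it to the O'Nan--Scott theory in \cite[Chapter 7.6]{b:Praeger-18} and relying on exactly the results you invoke (\cite[Theorems 5.13, 5.18, 7.7 and 11.13]{b:Praeger-18}) to obtain the wreath embedding, the projection condition, $\Soc(G)=\Soc(H)^d$ and $L\geq \Soc(H)\times 1^{d-1}$. The details you supply beyond the citations---matching the types $\PA$, $\HC$, $\CD$ of $G$ with component types $\AS$, $\HS$, $\SD$ of $H$, checking $c\geq 5$, and ruling out regular and Frobenius actions of $\Soc(H)$ via the nontrivial point stabilisers, the normal Frobenius kernel, and the fact that a Frobenius complement cannot contain a nonabelian simple subgroup---are precisely the routine verifications the paper leaves implicit, and they are all sound.
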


By Lemma~\ref{lem:socles}, $\Soc(H)$ is neither regular nor Frobenius, and hence, in particular, $L$ is not semiregular on $\Gamma$. Thus, by Lemma~\ref{lem:main}, $d=2$, and so by Lemma~\ref{lem:threepoints}, the stabiliser in $\Soc(H)$  of any three points is trivial. This group theoretic information will be crucial to our analysis. We will make use of the following information about primitive groups for which a stabiliser has a very small orbit.

\begin{lemma}\cite[Theorem]{a:Wong67}\label{lem:subdeg3}
Let $H$ be a primitive permutation group on $\Gamma$ with $\Soc(H)$ non-abelian, and such that $H_\alpha$ has an orbit of length $3$, where $\alpha\in\Gamma$. Then $(H,H_{\alpha})$ are as in {\rm Table~\ref{tbl:subs3}}.
\end{lemma}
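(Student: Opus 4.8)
The statement is Wong's classification of primitive groups with a subdegree equal to $3$, and I would prove it by the orbital graph method, using the nonabelian socle hypothesis to exclude the affine and small imprimitive configurations. First I would fix $\alpha\in\Gamma$ and let $\Delta=\alpha^{H_\alpha}$ be the suborbit of length $3$. Forming the orbital digraph $\Gmc$ on vertex set $\Gamma$ whose arcs are the $H$-images of the pairs $(\alpha,\delta)$ with $\delta\in\Delta$, primitivity of $H$ guarantees that the underlying undirected graph (the orbital together with its paired orbital) is connected. The group $H$ acts vertex-transitively, and since $H_\alpha$ is transitive on the out-neighbourhood $\Delta$ of $\alpha$, the induced local group $H_\alpha^{\Delta}\le\Sym(\Delta)\cong\S_3$ is transitive, hence equal to $\C_3$ or $\S_3$. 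In particular $|H_\alpha^{\Delta}|\in\{3,6\}$, so bounding $|H_\alpha|$ reduces to bounding the kernel $H_\alpha^{[\Delta]}$ of those elements fixing $\alpha$ and each of its three neighbours.

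The heart of the argument is this bound on $|H_\alpha|$. If the orbital is self-paired, then $\Gmc$ is a connected cubic undirected graph on which $H$ acts arc-transitively, and Tutte's theorem on symmetric trivalent graphs applies: such a graph is $s$-arc-transitive for some $s\le 5$, and the vertex stabiliser has order $3\cdot 2^{s-1}$, dividing $3\cdot 2^{4}=48$; hence $|H_\alpha|$ divides $48$. If the orbital is not self-paired, Tutte's theorem does not apply directly, and I would instead invoke the Thompson--Wielandt theorem, valid here because the local action ($\C_3$ or $\S_3$ of prime degree) is primitive: it forces the pointwise stabiliser of $\alpha$, $\beta$ and all their neighbours to be a $p$-group for a single prime $p$. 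Exploiting this together with the vertex-primitivity of $H$ and descending the stabiliser chain $H_\alpha\ge H_{\alpha\beta}\ge\cdots$ (each index at most $3$ by the suborbit length) then again bounds $|H_\alpha|$ by an absolute constant.

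With $|H_\alpha|$ bounded, I would carry out the identification. Since $H$ is primitive with $\Soc(H)=T$ nonabelian, $T$ is transitive on $\Gamma$ and $T_\alpha=T\cap H_\alpha$ has bounded order; the nonabelian socle hypothesis is exactly what excludes the affine configurations, where length-$3$ suborbits are ubiquitous, and the configurations of abelian-socle type such as $K_4$ with $\S_4$. The remaining task is to determine the finitely many vertex-primitive, arc-transitive cubic (di)graphs that arise and to read off $(H,H_\alpha)$ in each case: for instance the Petersen graph yields $(\A_5,\S_3)$ and $(\S_5,\S_3\times\S_2)$.

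This last enumeration is the main obstacle. The order bound alone does not produce the list, since there are arbitrarily large arc-transitive cubic graphs; the finiteness comes only after imposing vertex-primitivity, and one must combine the graph-theoretic constraints with the two possible local groups $\C_3$ and $\S_3$ to pin down each entry of Table~\ref{tbl:subs3}. This delicate case analysis is precisely what is carried out (by elementary, pre-classification permutation-group methods) in Wong's original paper, which is why I would in practice quote his theorem rather than reprove it.
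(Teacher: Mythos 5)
The first thing to say is that the paper does not prove Lemma~\ref{lem:subdeg3} at all: the statement is imported verbatim from Wong \cite{a:Wong67}, restricted to the nonabelian-socle case, so your bottom-line decision to quote Wong's theorem rather than reprove it is exactly the paper's own treatment, and on that basis your proposal is acceptable. Since you wrapped the citation in a proof sketch that the paper does not contain, the sketch deserves comment. Its self-paired branch is the standard route and is sound: Sims' refinement of Tutte's theorem bounds the vertex stabiliser of a connected cubic arc-transitive graph by $3\cdot 2^{4}=48$. Two parts would fail, however, if the sketch were asked to stand alone. First, in the non-self-paired branch, the Thompson--Wielandt theorem (which in any case postdates both Wong's and Sims' papers) asserts only that a certain depth-one pointwise stabiliser of an arc is a $p$-group; it does not bound $|H_\alpha|$, and ``descending the stabiliser chain with indices at most $3$'' proves nothing, because the length of that chain is not bounded a priori. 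Converting a bounded subdegree into an absolute bound on $|H_\alpha|$ is precisely Sims' conjecture (known in general only via the classification of finite simple groups), and for subdegree $3$ it required Sims' dedicated elementary analysis of both the self-paired and non-self-paired cases, on which Wong's determination then builds. Second, your final step speaks of enumerating ``the finitely many vertex-primitive, arc-transitive cubic (di)graphs that arise,'' but the conclusion of the lemma is not a finite list: Table~\ref{tbl:subs3} contains the infinite family $H=\PSL_{2}(q)$ with $H_\alpha=\S_4$ and $q\equiv\pm1\pmod{16}$, so there are infinitely many such vertex-primitive graphs; what is finite is only the list of possible stabiliser types $\S_3$, $\D_{12}$, $\S_4$, $\S_4\times\C_2$, and identifying which groups $H$ realise them is the genuinely hard group-theoretic core of Wong's paper. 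None of this invalidates your proposal, since, like the paper's, it ultimately rests on the citation rather than on the sketch.
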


\begin{table}
    \centering
    \small
    \caption{Primitive groups $H$ with non-abelian socle $T$ and a suborbit of length $3$.}\label{tbl:subs3}
    \begin{tabular}{lllllll}
        \hline
        $H$ &
        $|H_{\alpha}|$ &
        Degree &
        $T$ &
        $T_{\alpha}$ &
        Subdegrees of $T$ &
        Conditions\\
        \hline
        %
        $\PSL_{2}(q)$ 		&
        $\S_{4}$ &
        $\frac{q(q^{2}-1)}{48}$ &
        $\PSL_{2}(q)$ 		&
        $\S_{4}$ &
        -&
        $q\equiv \pm 1 \mod{16}$\\
        %
        $\PSL_{3}(3)$ &
        $\S_{4}$ &
        $234$ &
        $\PSL_{3}(3)$ &
        $\S_{4}$ &
        $1,3,4^{2},6,12^{8},24^{5}$ &
        \\
        %
        $\PSL_{3}(3){:}2$ 		&
        $\S_{4}{\times} \C_{2}$ &
        $234$ &
        $\PSL_{3}(3)$ 		&
        $\S_{4}$ &
        $1,3,4^{2},6,12^{8},24^{5}$&
        \\
        %
        $\PSL_{2}(13)$ 		&
        $\D_{12}$ &
        $91$ &
        $\PSL_{2}(13)$ 		&
        $\D_{12}$ &
        $1,3^2, 6^4, 12^5$
        &
        \\
        %
        $\PSL_{2}(11)$ 		&
        $\D_{12}$ &
        $55$ &
        $\PSL_{2}(11)$ 		&
        $\D_{12}$ &
        $1,3^2, 6^4, 12^2$
        &
        \\
        %
        $\PGL_{2}(7)$ 		&
        $\D_{12}$ &
        $28$ &
        $\PSL_{2}(7)$ 		&
        $\S_{3}$ &
        $1, 3^{3},6^{3}$
        \\
        %
        $\S_{5}$ 		&
        $\D_{12}$ &
        $10$  &
        $\A_{5}$ 		&
        $\S_{3}$ &
        $1$,$3$,$6$ \\
        %
        $\A_{5}$ 		&
        $\S_{3}$ &
        $10$ &
        $\A_{5}$ 		&
        $\S_{3}$ &
        $1$,$3$,$6$ \\
        \hline
    \end{tabular}
\end{table}


\begin{proposition}\label{prop:stab-three}
    Let $H$ be a finite primitive permutation group on $\Gamma$, with  socle $T$, such that $T_{\alpha,\beta,\gamma}=1$ for all pairwise distinct $\alpha,\beta,\gamma\in\Gamma$, and such that $T$ is neither regular nor a Frobenius group. Let $\alpha\in\Gamma$. Then
    \begin{enumerate}[\rm (a)]
        \item $T_\alpha$ has no orbit of length $2$.
        \item If $T_\alpha$ has an orbit of length $3$, then $T_{\alpha}=\S_{3}$, and  $(H,H_{\alpha}, |\Gamma|)$ is either $(\A_{5},\S_{3}, 10)$ or
        $(\S_{5},\D_{12}, 10)$.
        \item If $T_\alpha$ has an orbit of length $4$, then $T_{\alpha}=\A_{4}$, and the triple $(H,H_{\alpha}, |\Gamma|)$ is either $(\A_{5},\A_{4}, 5)$
        or $(\S_{5},\S_{4}, 5)$.
    \end{enumerate}
\end{proposition}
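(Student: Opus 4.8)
The plan rests on two facts used throughout. First, if $\Delta$ is a $T_\alpha$-orbit of length $\ell\in\{2,3,4\}$, then the kernel of the action of $T_\alpha$ on $\Delta$ lies in $T_{\alpha,\beta,\gamma}=1$ for any two further points $\beta,\gamma\in\Delta$; hence $T_\alpha$ acts faithfully and transitively on $\Delta$ and embeds as a transitive subgroup of $\Sym(\Delta)\cong\S_\ell$. Since $T$ is not regular it is non-abelian, so $\Soc(H)=T$ is non-abelian and Lemma~\ref{lem:subdeg3} is available once a sub-orbit of length $3$ is produced. Second, I will repeatedly invoke two exclusion principles: the \emph{Frobenius principle}, that if every non-identity element of $T_\alpha$ fixes only $\alpha$ then $T_{\alpha,\beta}=1$ for all $\beta\ne\alpha$ and $T$ is Frobenius, against hypothesis; and the \emph{equal-stabiliser principle}, that $\gamma\sim\delta\iff T_\gamma=T_\delta$ is an $H$-invariant equivalence (as $T\unlhd H$), so by primitivity of $H$ its classes are singletons. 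The latter shows that $\alpha$ is the only fixed point of $T_\alpha$, since $T_\alpha\le T_\beta$ together with $|T_\beta|=|T_\alpha|$ would give $T_\alpha=T_\beta$.

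For part (a), an orbit $\{\beta,\gamma\}$ of length $2$ forces $T_{\alpha,\beta}=T_{\alpha,\gamma}=T_{\alpha,\beta,\gamma}=1$, so $T_\alpha=\langle t\rangle$ has order $2$; by the triple condition $t$ fixes only $\alpha$ (apply the Frobenius principle) or $\alpha$ and one further point $\beta_0$ with $T_\alpha=T_{\beta_0}$ (apply the equal-stabiliser principle), both contradictions. For part (b), faithful transitivity gives $T_\alpha\le\S_3$ transitive, and the same argument applied to an order-$3$ element excludes $\C_3$, so $T_\alpha\cong\S_3$. By part (a) the non-trivial $T_\alpha$-orbits have length $3$ or $6$ and $\alpha$ is the unique fixed point; a transposition in $T_\alpha$ fixes $\alpha$ together with one point of each length-$3$ orbit and no point of a length-$6$ orbit, so fixing at most two points forces a \emph{unique} orbit $\Delta$ of length $3$. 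Being unique, $\Delta$ is $H_\alpha$-invariant, giving $H_\alpha$ a sub-orbit of length $3$, and Lemma~\ref{lem:subdeg3} applies; among its rows with $T_\alpha\cong\S_3$, the group $\PGL_2(7)$ (degree $28$, $T$-subdegrees $1,3^3,6^3$) has three sub-orbits of length $3$ and is excluded by this count, leaving exactly $(\A_5,\S_3,10)$ and $(\S_5,\D_{12},10)$.

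For part (c), if $\beta$ lies in a length-$4$ orbit then $T_{\alpha,\beta}$ acts semiregularly on the other three points of that orbit, so $|T_{\alpha,\beta}|\in\{1,3\}$ and $|T_\alpha|\in\{4,12\}$. If $|T_\alpha|=4$ then, using part (a) to forbid length-$2$ orbits, every non-fixed orbit is regular and the Frobenius principle applies; hence $T_\alpha\cong\A_4$. Fixed-point counts exclude orbits of length $3$ and $6$ (the normal $\C_2^2$, respectively an involution, would then fix a third point), leaving lengths $1,4,12$, and counting fixed points of a $3$-element (which fixes $\alpha$ and one point of each length-$4$ orbit) forces a \emph{unique} length-$4$ orbit $\Delta$, all others regular. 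Then $\Delta$ is the neighbourhood of $\alpha$ in a connected $T$-arc-transitive orbital graph of valency $4$ on which the triple condition makes $T$ regular on $2$-arcs. \textbf{The main obstacle is the final identification:} since there is now no sub-orbit of length $3$, Lemma~\ref{lem:subdeg3} cannot be applied directly. I would instead show that these very restrictive conditions (socle of a primitive group, point stabiliser $\A_4$, sub-degrees $1,4,12,\dots,12$, trivial three-point stabiliser) force $|\Gamma|=5$ and $T=\A_5$ in its natural action—either by invoking the classification of primitive groups with a sub-orbit of length $4$, or, after reducing to the almost simple case, by the order identity $|T|=12\,|\Gamma|$, for which only $|T|=60$ is admissible. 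This yields $(\A_5,\A_4,5)$ and $(\S_5,\S_4,5)$.
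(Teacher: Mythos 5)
Your parts (a) and (b) are correct and essentially follow the paper's own route: faithful transitive action of $T_\alpha$ on a short orbit, exclusion of the regular case via the Frobenius property, uniqueness of the length-$3$ suborbit by counting fixed points of a transposition, and then Lemma~\ref{lem:subdeg3} with the $\PGL_{2}(7)$ row excluded because its socle has three suborbits of length $3$. Your ``equal-stabiliser principle'' is a clean elementary substitute for the paper's citation of the Wielandt-type result \cite{a:wielandt} giving $\fix_{\Gamma}(T_\alpha)=\{\alpha\}$, and your semiregularity argument pinning $|T_\alpha|\in\{4,12\}$, hence $T_\alpha\cong\A_4$, with subdegrees $1,4,12,\dots,12$ and a unique length-$4$ suborbit, matches the paper's analysis step for step.

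The genuine gap is exactly where you flag it: the final identification in part (c). Your second proposed route is a non-sequitur: the order identity $|T|=12\,|\Gamma|$ does not force $|T|=60$. For instance $\A_4$ is a maximal subgroup of $\PSL_2(13)$ of index $91$, so candidates like this (and infinitely many others with an $\A_4$ subgroup) must be eliminated by an actual argument; the identity alone provides no mechanism. Your first route (a classification of primitive groups with a suborbit of length $4$) exists in the literature, but you neither cite a precise statement nor carry out the resulting case analysis, and you would still need to handle the reduction from $T=S^k$ to $T$ simple. The paper closes this gap self-containedly, as follows: since no suborbit has length $2$ or $6$, every involution of $T_\alpha\cong\A_4$ fixes only $\alpha$, so $|\Gamma|$ is odd and $K:=\Obf_2(T_\alpha)\cong\C_2^2$ is a Sylow $2$-subgroup of $T$; moreover $\Nbf_T(K)$ fixes the unique fixed point of $K$, so $\Nbf_T(K)=T_\alpha$ and $K=\Cbf_T(K)$ is self-centralising; this forces $k=1$ in $T=S^k$ (as $4$ divides $|S|$ but $8$ does not divide $|T|$), and then Gorenstein's classification of simple groups with a self-centralising Klein four Sylow $2$-subgroup \cite[Theorem 15.2.1]{b:Gorenstein} gives $T=\PSL_2(q)$ with $q\equiv 3,5\pmod 8$, $q>3$; finally the conjugacy-counting Lemma~\ref{lem:conj}, applied to an involution $x\in T_\alpha$ with one fixed point and $u_1=3$ conjugates in $T_\alpha$, yields $|\Cbf_T(x)|=4$, and since $\Cbf_{\PSL_2(q)}(x)=\D_{q\pm1}$ this forces $q=5$. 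Without some such argument (or a correctly cited and fully applied suborbit-$4$ classification), your part (c) remains incomplete.
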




\begin{proof}
    Since $T$ is not regular on $\Gamma$, $T$ is nonabelian, and moreover  it follows from \cite[Corollary 2.2]{a:wielandt} that $\fix_{\Gamma}(T_\alpha)=\{\alpha\}$. Let $\Delta$ be a $T_\alpha$-orbit in $\Gamma\setminus\{\alpha\}$ of least cardinality.
    Then $|\Delta|\geq 2$, and we suppose that $|\Delta|$ is $2$, $3$ or $4$. Since the identity is the only element of $T$ fixing any three points of $\Gamma$, it follows that $T_\alpha$ acts faithfully on $\Delta$, that is, $T_\alpha^\Delta\cong T_\alpha$.
    If $T_{\alpha}^\Delta$ were regular, then this would imply that $|T_\alpha|=|\Delta|$, and then by the minimality of $|\Delta|$, all
    $T_\alpha$-orbits in  $\Gamma\setminus\{\alpha\}$ must have length $|\Delta|$. This means that $T_\alpha$ acts regularly and faithfully on  each of them, whence $T$ is a Frobenius group, which is a contradiction.   Thus $T_{\alpha}^\Delta$ is not regular, and
     in particular, $|\Delta|>2$, proving  part (a).

    If $|\Delta|=4$ and $T_{\alpha}^\Delta\cong \D_8$ or $\S_4$, then some nontrivial element of $T_\alpha$ fixes two points of $\Delta$, and hence fixes three points of $\Gamma$, which is a contradiction. The only possiblities remaining for $(|\Delta|, T_\alpha)$ are $(3, \S_3)$ and $(4, \A_4)$.

     Suppose first that  $(|\Delta|, T_\alpha)=(3, \S_3)$. Then all $T_\alpha$-orbits in $\Gamma\setminus\{\alpha\}$ have lengths 3 or 6. If there is a second orbit $\Delta'$ of length 3, then  for $\beta\in\Delta$, $T_{\alpha\beta}\ne 1$ and fixes at least three points ($\alpha$ and one in each of $\Delta$ and $\Delta'$), a contradiction. Thus $\Delta$ is the unique $T_\alpha$-orbit of length 3, which means that it is also an $H_\alpha$-orbit.
     By Lemma~\ref{lem:subdeg3},  $(H, T, |\Gamma|)$ appears in Table~\ref{tbl:subs3}, with $T_\alpha=\S_3$ and a unique subdegree 3, and hence is one of the triples in part (b).

     Suppose finally that $(|\Delta|, T_\alpha)=(4, \A_4)$. Then all $T_\alpha$-orbits in $\Gamma\setminus\{\alpha\}$ have lengths 4, 6 or 12.
The same argument as in the previous paragraph shows that $\Delta$ is the unique $T_\alpha$-orbit of length 4, and is also an $H_\alpha$-orbit. If $T_\alpha$ had an orbit $\Delta'$ of length $6$, then the action of $T_{\alpha}$ on $\Delta'$ is equivalent to its action by right multiplication on the cosets of $\langle (1,2)(3,4)\rangle$, and so an involution in $T_\alpha$ would fix two points of $\Delta'$ and hence three points of $\Gamma$, a contradiction.Thus each $T_\alpha$-orbit in $\Gamma\setminus (\{\alpha\}\cup\Delta)$
    has length 12, and in particular each involution in $T_\alpha$ fixes only the point $\alpha$. Hence $|\Gamma|$ is odd,
and $K:= \Obf_2(T_\alpha)\cong \C_2^2$ is a Sylow $2$-subgroup of $T$. Moreover, $\Nbf_T(K)$ fixes the unique
    fixed point $\alpha$ of $K$, so $\Nbf_T(K)=T_\alpha$. This implies that $K=\Cbf_T(K)$ is self-centralising.
    Since the  socle $T$ of $H$ is not regular it follows that $T=S^k$ for some non-abelian simple group $S$ and integer $k\geq1$. Since also $4$ divides $|S|$ while $|T|$ is not divisible by $8$ it follows that $k=1$, so $T$ is a non-abelian simple group.
    All simple groups $T$ with a self-centralising Sylow $2$-subgroup of order $4$ are known
    (see \cite[Theorem 15.2.1]{b:Gorenstein}), namely $T = \PSL_{2}(q)$ with $q\equiv 3,5 \pmod{8}$ and $q>3$.
    Let $x\in T_\alpha$ be an involution. Then $x$ has $f=1$ fixed point, and the stabiliser
    $T_\alpha$ contains exactly $u_1=3$ conjugates of $x$ while $T$ contains $u=|T:\Cbf_T(x)|$ conjugates of $x$.
    Applying Lemma~\ref{lem:conj}, we find
    \[
    \frac{|T|}{12} = |\Gamma|=\frac{|\Gamma|}{f}=  \frac{|T:\Cbf_T(x)|}{3}
    \]
    whence $|\Cbf_T(x)|=4$. However, for $T = \PSL_{2}(q)$, we have
    $\Cbf_T(x) = \D_{q\pm 1}$, and hence $q=5$ and
    $(H,H_{\alpha}, |\Gamma|)$ is $(\A_{5},\A_{4}, 5)$ or $(\S_{5},\S_{4}, 5)$, proving (c).
\end{proof}

We now prove Theorem \ref{thm:main-pa}.

\begin{proof}[\bf Proof of Theorem \ref{thm:main-pa}]
Suppose that $\Dmc=(\Pmc,\Bmc)$ is a biplane admitting a point-primitive automorphism group $G$ of type \PA, \HC,  or \CD.
Then by Lemma~\ref{lem:socles}, we may assume that Hypothesis~\ref{hypo} holds with $T=\Soc(H)$ neither regular nor Frobenius, and with $\Soc(G)=T^d$ and $L\geq T\times 1^{d-1}$. Since $T$ is not regular on $\Gamma$ it follows that $L$ is not semiregular on $\Pmc$.
Then by Lemma~\ref{lem:main},  $d=2$ and $c\geq11$.

Let $B$ be a block of $\Dmc$, say $B=\{(\alpha_{i},\beta_{i})\mid i=1,\ldots,k\}$. If $\alpha_1=\dots = \alpha_k$, then each of the $\binom{k}{2}=k(k-1)/2 = c^2-1$ unordered pairs from $B$ lies in the set $P_1$ of \eqref{eq:d2-s1s2}, while by Lemma \ref{lem:threepoints},
the number of such pairs is at most $2(c-1)$.
Hence $c^2-1\leq 2(c-1)$, which is a contradiction. The same argument shows that the $\beta_i$ are not all equal. On the other hand, by Lemma~\ref{lem:threepoints}, the block $B$ contains at least three points with the same first coordinate or at least three points with  the same second coordinate. For $i=1, 2$, let $r_{i}$ be the maximum size of a subset $S_i$ of $B$ for which the $i$th coordinates of all the points in $S_i$ are equal. We have just shown that $1\leq r_i\leq k-1$ for each $i$. Without loss of generality we may assume that $r_1\geq r_2$, and hence $r_1\geq3$. Note that $|S_1\cap S_2|\leq 1$, so we may further assume that   $\alpha_{1}=\ldots=\alpha_{r_{1}}=\alpha$ and $\beta_{r_{1}+\delta}=\ldots=\beta_{r_{1}+r_{2}+\delta}=\beta$, where $\delta\in\{0,1\}$. Finally, if $r_2=1$, then we may assume (since $r_1\leq k-1$) that $\delta=0$ and $S_2=\{(\alpha_{r_1+1},\beta)\}$ with $\alpha_{r_1+1}\ne \alpha$. Thus in all cases, $\alpha_{r_1+1}\ne\alpha$.

Write $\Soc(G)=T_1\times T_2$ with $T_i\cong T=\Soc(H)$ so that $T_1\leq L$.
Then $(T_{1})_{\alpha}$ fixes pointwise the $r_1\geq 3$ points of $S_1\subseteq B$. Since $\Dmc$ is a biplane, this implies that
$(T_{1})_{\alpha}$ fixes $B$, and hence that
$B$ contains $S_2':= \{(\alpha_{r_{1}+1}^x,\beta) \mid x\in (T_{1})_{\alpha}\}$. Note that $S_2'\subseteq S_2$ so $r_2\geq |S_2'|=|\alpha_{r_1+1}^{(T_{1})_{\alpha}}|$. As we noted above, $T_1$ is neither regular nor a Frobenius group, and by Lemma~\ref{lem:threepoints} all  three-point stabilisers in $T_1$ are trivial. Hence the primitive permutation group $H$ satisfies the hypotheses of Proposition~\ref{prop:stab-three}. Then  since $c=|\Gamma|\geq 11$, it follows from Proposition~\ref{prop:stab-three} that every orbit of $(T_{1})_{\alpha}$ in $\Gamma\setminus\{\alpha\}$ has size at least $5$. Thus $r_1\geq r_2\geq |\alpha_{r_1+1}^{(T_{1})_{\alpha}}|\geq 5$,
 and if $(\alpha,\beta)\in B$, then even $r_2-1\geq  |\alpha_{r_1+1}^{(T_{1})_{\alpha}}|\geq 5$, so $r_1\geq r_2\geq 6$.

Since $r_2\geq 3$, we may argue as above with $1\times (T_{2})_{\beta}$ in place of $(T_1)\alpha\times 1$, and conclude that
also $(T_2)_\beta$ fixes $B$. Consider $R:=(T_{1})_{\alpha}\times (T_{2})_{\beta}$. We have shown
that $R\leq (T_{1}\times T_{2})_{B}$. We may partition $B$ as follows:
\begin{align}\label{eq:thm-PA}
    \nonumber    S_{1}&:=\{(\alpha,\beta_{i})\in B\mid 1\leq i\leq r_1\};\\
    S_{2}''&:=\{(\alpha_{i},\beta)\in B\mid \alpha_{i}\in \Gamma\}\setminus\{(\alpha,\beta)\}\subseteq S_2;\\
    \nonumber    S_{3}&:=B\setminus(S_{1}\cup S_{2}'').
\end{align}
Then $B$ is the disjoint union $S_{1}\cup S_{2}''\cup S_{3}$. Since $R=(T_{1})_{\alpha}\times (T_{2})_{\beta}$
fixes each of $S_1, S_2'', S_3$ setwise, each of these sets is a union of $R$-orbits, each of of these $R$-orbits has size at least 5.
Suppose that $O_{i}=(\alpha_{i},\beta_{i})^R$, for $i=1,\ldots,t$, are the $R$-orbits contained  $S_{3}$ (where $t\geq0$). Note that each
$\alpha_{i}\neq \alpha$ and each $\beta_{i}\neq \beta$.
Set $r_{1i}=|\alpha_{i}^{(T_{1})_{\alpha}}|$ and $r_{2i}=|\beta_{i}^{(T_{2})_{\beta}}|$. Then,
\begin{align*}
|S_{1}|&=r_{1},\\
|S_{2}''|&=
\begin{cases}
r_{2}-1,& \hbox{if $(\alpha,\beta)\in B$}; \\
r_{2},&\hbox{otherwise.}\\
\end{cases}\\
|S_{3}|&=\sum_{i=1}^{t}r_{1i}r_{2i},
\end{align*}
and each of $r_{1}$, $|S_2''|$,  $r_{1i}$ and $r_{2i}$ (for  $i=1,\ldots,t$) is at least $5$.

We now determine the number of ordered pairs $(p,q)\in B\times B$  such that $p\ne q$, $\{p, q\}$ is contained in $S_1$ or $S_2''$ or  the same $R$-orbit in $S_3$, and $p, q$
have either the same first coordinate, or the same second coordinate. The number of these pairs
with $p,q\in S_{1}$ is $r_{1}(r_{1}-1)$; the number of them with $p,q\in S_{2}''$ is $r_{2}(r_{2}-1)$
or $(r_{2}-1)(r_{2}-2)$ according as $(\alpha,\beta)\notin B$ or $(\alpha,\beta)\in B$, respectively.
Finally, for each $i=1,\dots,t$, the number of such pairs with $p,q\in O_{i}$ is $r_{1i}r_{2i}(r_{2i}-1)+r_{1i}r_{2i}(r_{1i}-1)=r_{1i}r_{2i}(r_{1i}+r_{2i}-2)$.
Note that if $t=0$, then there are no pairs of the last type. Hence, the total number $m$ of such ordered pairs satisfies
\begin{align*}
m=
\begin{cases}
r_{1}(r_{1}-1)+r_{2}(r_{2}-1)+r_{t}& \hbox{if $(\alpha,\beta)\notin B$,} \\
r_{1}(r_{1}-1)+(r_{2}-1)(r_{2}-2)+r_{t}&\hbox{if $(\alpha,\beta)\in B$,}\\
\end{cases}
\end{align*}
where
\begin{align*}
r_{t}=
\begin{cases}
\sum_{i=1}^{t} r_{1i}r_{2i}(r_{1i}+r_{2i}-2)& \hbox{if $S_{3}\neq \emptyset$}, \\
0&\hbox{if $S_{3}= \emptyset$.}\\
\end{cases}
\end{align*}

The fact that $r_{1i}$ and $r_{2i}$ are at least $5$, for all $i$, implies that $r_{t}\geq 8\sum_{i=1}^{t}r_{1i}r_{2i}\geq 4|S_{3}|$
(and the inequality $r_t\geq 4|S_3|$ holds also if $S_{3}= \emptyset$). Moreover, if $(\alpha,\beta)\notin B$, then $r_{1}(r_{1}-1)+r_{2}(r_{2}-1)
\geq 4(|S_{1}|+|S_{2}''|)$ as $r_{1}\geq 5$ and $r_{2}\geq 5$. On the other hand, if $(\alpha,\beta)\in B$, then $r_2-1=|S_2''|\geq5$ and again
$r_{1}(r_{1}-1)+(r_{2}-1)(r_{2}-2)\geq 4(r_{1}+r_{2}-1)= 4(|S_{1}|+|S_{2}''|)$. Therefore, in all cases $m\geq 4(|S_{1}|+|S_{2}''|+|S_{3}|)=4k$.
However, by Lemma \ref{lem:threepoints}, the total number of ordered pairs of points of $B$ with the same first coordinate or the same second coordinate is $4(c-1)$.
Therefore, $m\leq 4(c-1)$, and so $4k\leq m\leq 4(c-1)$. Thus  $k< c$, contradicting Lemma \ref{lem:basic}.
\end{proof}

\subsection{Automorphism groups preserving a cartesian decomposition}\label{sec:car}

In this section, we complete the proof of Theorem~\ref{thm:main}. Let $\Dmc$ be a biplane admitting a point-primitive group $G$ which preserves a cartesian decomposition. Then the type of $G$ is not \HS\ or \SD\ (as these groups do not preserve cartesian decompositions), and is not \PA, \HC, or \CD, by Theorem~\ref{thm:main-pa}. The remaining types are \HA, \AS, and \TW.  First we show that type \HA\ is possible by giving an example of a biplane with parameters  $(16,6,2)$ admitting an automorphism group preserving a homogeneous cartesian decomposition. Then we prove in Proposition~\ref{prop:AS} that type \AS\ is not possible. This will complete the proof of Theorem~\ref{thm:main}.

\begin{example}\label{ex:S4wrS2}
For $i=1,\ldots,5$, let $\alpha_{i}\in\S_{16}$, acting on  $\Pmc=\{1,\ldots,16\}$, defined as in \eqref{eq:ex}, and let $G=\langle \alpha_{i}\mid i=1,\ldots,5\rangle$.
\begin{align}\label{eq:ex}
\nonumber \alpha_{1}:=&(2,4,3)(5,13,9)(6,16,11)(7,14,12)(8,15,10);\\ \nonumber \alpha_{2}:=&(2,6,5)(3,11,9)(4,16,13)(7,12,14)(8,15,10);\\ \alpha_{3}:=&(2,6)(3,11)(4,16)(7,15)(8,12)(10,14);\\
\nonumber \alpha_{4}:=&(3,5)(4,6)(11,13)(12,14);\\
\nonumber    \alpha_{5}:=& (1,2)(3,4)(5,6)(7,8)(9,10)(11,12)(13,14)(15,16).
\end{align}
Then  $G\cong \S_{4}\wr \S_{2}$, and is a primitive permutation group of rank $3$ of affine type \HA, on $16$ points, and is a maximal subgroup of index $10$ of a larger primitive group $X:= \C_{2}^{4}{:}\S_{6}$.   The point-stabiliser $G_{1}$ of $G$ is isomorphic to $(\S_{3}\times \S_{3}){:}\C_{2}$. Let $B=\{1, 2, 3, 5, 9, 16\}$. Then the set $\Bmc=B^{G}$ of $G$-translates of $B$  forms the block set for a biplane $\Dmc=(\Pmc,\Bmc)$ with parameters $(16,16,2)$. The full automorphism group of $\Dmc$ is the group $X$. The biplane $\Dmc$ is $G$-flag-transitive with block-stabiliser isomorphic to $\S_{3}$. We moreover observe that the  set $\Emc=\{\Gamma_{1},\Gamma_{2}\}$ defined in \eqref{eq:ex-cd} is a homogeneous cartesian decomposition of the  point set $\Pmc$ and the  group $G$ preserves this cartesian decomposition. (Each point $i$ is identified with the unique ordered pair $(\gamma_1,\gamma_2)\in \Gamma_1\times\Gamma_2$ where $\gamma_j$  is the part of $\Gamma_j$ containing $i$.)
\begin{align}\label{eq:ex-cd}
\nonumber    \Gamma_{1}:=\{ &\{ 1, 8, 10, 15 \}, \{ 2, 7, 9, 16 \}, \{ 3, 6, 12, 13 \}, \{ 4, 5, 11, 14 \} \};\\
\Gamma_{2}:=\{ &\{ 1, 7, 12, 14 \}, \{ 2, 8, 11, 13 \}, \{ 3, 5, 10, 16 \}, \{ 4, 6, 9, 15 \} \}.
\end{align}	
The full automorphism group $X$ does not preserve this cartesian decomposition.
\end{example}

For the proof of Proposition \ref{prop:AS}, we need to know the possible solutions of the Diophantine equation \eqref{eq:dio}:

\begin{lemma}\label{lem:dio}
The Diophantine equation
\begin{align}\label{eq:dio}
    8x^2-y^2 = 7
\end{align}
has infinitely many solutions $(x_n,y_n)_{n\geq 0}$ and $(x'_n,y'_n)_{n\geq 0}$ in positive integers, where
\begin{align}\label{eq:dio-sol}
\begin{cases}
    x_n=u_n+v_n\\
    y_n=u_n+8v_n
\end{cases}
    \mbox{ and }
\begin{cases}
    x'_n=u_n-v_n\\
    y'_n=-u_n+8v_n,
\end{cases}
\end{align}
and $(u_n,v_n)_{n\geq 0}$ is defined in \eqref{eq:dio-sol-uv} below:
\begin{align}\label{eq:dio-sol-uv}
\begin{cases}
    u_n=\dfrac{1}{2}\left[
    \left(
    3+\sqrt{8}\right)^n
    +
    \left(
    3-\sqrt{8}\right)^n
    \right]\\
    \\
    v_n=\dfrac{1}{2\sqrt{8}}\left[
    \left(
    3+\sqrt{8}\right)^n
    -
    \left(
    3-\sqrt{8}\right)^n
    \right].
\end{cases}
\end{align}
Moreover, the pairs $(x_n,y_n)_{n\geq 0}$ and $(x'_n,y'_n)_{n\geq 0}$ are the only solutions to \eqref{eq:dio}.
\end{lemma}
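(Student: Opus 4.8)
The plan is to read \eqref{eq:dio} as a generalised Pell equation and exploit the arithmetic of the quadratic ring $\Zbb[\sqrt8]$. Writing $N(y+x\sqrt8)=y^2-8x^2$ for the norm form, equation \eqref{eq:dio} is equivalent to $N(y+x\sqrt8)=-7$. The element $\epsilon:=3+\sqrt8$ satisfies $N(\epsilon)=3^2-8\cdot1^2=1$ and is the fundamental unit of norm $1$, with conjugate $\bar\epsilon=3-\sqrt8=\epsilon^{-1}$. Writing $\epsilon^n=u_n+v_n\sqrt8$, the two identities $\epsilon^n=u_n+v_n\sqrt8$ and $\bar\epsilon^n=u_n-v_n\sqrt8$ recover the Binet formulas \eqref{eq:dio-sol-uv}, and multiplying them gives $u_n^2-8v_n^2=N(\epsilon)^n=1$ for all $n$. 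This is the structural input for everything that follows, and it immediately yields the existence of infinitely many solutions once the two families are checked.

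For the first claim I would simply compute. From $x_n=u_n+v_n$, $y_n=u_n+8v_n$ one gets $y_n^2-8x_n^2=-7(u_n^2-8v_n^2)=-7$, and the identical computation for $(x'_n,y'_n)$ gives the same value; equivalently $y_n+x_n\sqrt8=(1+\sqrt8)\,\epsilon^n$ and $y'_n+x'_n\sqrt8=(-1+\sqrt8)\,\epsilon^n$, each of norm $(-7)\cdot1=-7$. This is routine, so the entire substance of the lemma is its final sentence, that there are no further solutions.

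For completeness I would use the orbit structure of a norm equation under the unit group $\langle\epsilon\rangle$. Given a solution, set $\theta=y+x\sqrt8$; since $N(\theta)=-7<0$, $\theta$ and $\bar\theta$ have opposite signs, so after replacing $(x,y)$ by $(-x,-y)$ if necessary we may assume $\theta>0>\bar\theta$, which forces $x>0$ and gives $\theta\,|\bar\theta|=7$. Multiplication by $\epsilon$ sends the conjugate pair $(\theta,\bar\theta)$ to $(\epsilon\theta,\epsilon^{-1}\bar\theta)$, scaling $\theta$ up and $|\bar\theta|$ down by the factor $\epsilon>1$. Hence each $\langle\epsilon\rangle$-orbit of solutions is a geometric progression in $\theta$ with ratio $\epsilon$, and so meets the interval $\sqrt7\le\theta<\sqrt7\,\epsilon$ in exactly one point; repeatedly applying $\epsilon^{\mp1}$ is thus a genuine descent bringing any solution into this fundamental range.

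The main (and only nontrivial) obstacle is then a finite computation in this range. Since $\theta-\bar\theta=\theta+7/\theta$ is increasing for $\theta\ge\sqrt7$, the bounds $\sqrt7\le\theta<\sqrt7\,\epsilon$ give $2\sqrt7\le 2x\sqrt8<\sqrt7(\epsilon+\epsilon^{-1})=6\sqrt7$, which confines $x$ to $\{1,2\}$. Checking these, $x=1$ forces $y=1$ (giving $\theta=1+\sqrt8$) and $x=2$ forces $y=5$ (giving $\theta=5+2\sqrt8$), while no larger $x$ occurs. These two representatives lie in distinct orbits, for instance because $(5+2\sqrt8)/(1+\sqrt8)=(9-2\sqrt8)/7\notin\Zbb[\sqrt8]$ is not a unit, so there are exactly two orbits. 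Finally, $5+2\sqrt8=(-1+\sqrt8)\,\epsilon$, so the two orbits are precisely $\{(1+\sqrt8)\epsilon^n\}$ and $\{(-1+\sqrt8)\epsilon^n\}$, that is, the families $(x_n,y_n)$ and $(x'_n,y'_n)$ of \eqref{eq:dio-sol}; intersecting with the positive quadrant, where $y>0$ corresponds to $\theta>\sqrt7$, selects $n\ge0$ in the first family and $n\ge1$ in the second (the value $(x'_0,y'_0)=(1,-1)$ being the only non-positive entry), which recovers exactly the asserted positive-integer solutions and completes the proof. I would remark that this is a standard instance of the theory of the generalised Pell equation and could instead be quoted from a reference such as Nagell's text.
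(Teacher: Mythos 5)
Your proof is correct, and it differs from the paper's in a meaningful way: the paper gives no argument at all for this lemma, disposing of it with a citation to Theorem 4.1.4 and the discussion on page 60 of Andreescu--Andrica, \emph{Quadratic Diophantine Equations}. What that reference encapsulates is exactly the machinery you reconstruct — solutions of the generalised Pell equation $y^2-8x^2=-7$ fall into finitely many classes, each an orbit under the fundamental unit $\epsilon=3+\sqrt8$ of $\Zbb[\sqrt8]$, with class representatives found by a finite search in a fundamental range. Your verification that the two families solve \eqref{eq:dio} via $y_n+x_n\sqrt8=(1+\sqrt8)\epsilon^n$ and $y'_n+x'_n\sqrt8=(-1+\sqrt8)\epsilon^n$, the descent bringing any solution $\theta=y+x\sqrt8$ into $\sqrt7\le\theta<\sqrt7\,\epsilon$, and the bound $2\sqrt7\le 2x\sqrt8<6\sqrt7$ confining $x$ to $\{1,2\}$ are all sound. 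What your route buys is a self-contained lemma, independent of the reference; what the paper's route buys is brevity, at the cost of leaving the actual content of the cited theorem invisible to the reader.

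Two small corrections. First, an arithmetic slip: $(5+2\sqrt8)/(1+\sqrt8)=(11+3\sqrt8)/7$, not $(9-2\sqrt8)/7$; your conclusion that this ratio lies outside $\Zbb[\sqrt8]$ (hence is not a unit) still holds, and in any case the step is redundant, since two distinct elements of the fundamental interval automatically lie in distinct $\langle\epsilon\rangle$-orbits. Second, your observation that $(x'_0,y'_0)=(1,-1)$ fails to be positive, so that the primed family genuinely starts at $n=1$, is a point on which your write-up is actually more precise than the lemma's own statement, which asserts positive solutions for all $n\geq 0$ in both families.
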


\begin{proof}
   This follows from \cite[Theorem 4.1.4]{b:Dio} and the following argument on page 60 of that book.
\end{proof}

\begin{proposition}\label{prop:AS}
 Suppose that {\rm Hypothesis \ref{hypo}} holds. Then $G$ cannot be of type \AS.
\end{proposition}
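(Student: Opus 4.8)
The plan is to assume that $G$ is of type $\AS$ with $T:=\Soc(G)$ nonabelian simple, and to derive a contradiction. I would first locate $T$ inside $W=H\wr\S_d$. Writing $K=G\cap H^d$ for the base subgroup, simplicity of $T$ gives $T\cap K\in\{1,T\}$; if $T\cap K=1$ then $T$ embeds in $G/K\leq\S_d$, forcing the absurdity $c^d\leq|T|\leq d!$ (since $c\geq5$), so $T\leq H^d$. Projecting to the coordinates shows $T$ is a full diagonal subgroup of $T_1\times\cdots\times T_d$, where each $T_i=\pi_i(T)\cong T$ is transitive on $\Gamma$. The crucial observation is that here $L=1$. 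Indeed $L=\sigma(L)\times 1^{d-1}$ with $\sigma(L)\unlhd H$, so if $\sigma(L)\neq1$ then $\sigma(L)\supseteq\Soc(H)$ by primitivity of $H$; conjugating $L\supseteq\Soc(H)\times 1^{d-1}$ by the coordinate-permuting elements of $G$ (which exist since $G$ is transitive on the $d$ factors) would place a copy of $\Soc(H)$ in every coordinate, giving $\Soc(H)^d\leq G$. This is a nontrivial normal subgroup of $G$ that is either non-simple or abelian, contradicting the fact that the socle of a type-$\AS$ group is nonabelian simple. Hence $\sigma(L)=1$, i.e. $G\cap H^d$ contains no element supported on a single coordinate.

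This already explains why the block-counting argument of Theorem~\ref{thm:main-pa} is unavailable for type $\AS$: that argument relied on the pointwise block-stabilisers furnished by $\Soc(H)\times 1^{d-1}\leq L$, which is exactly what fails here. Instead I would exploit that the transitivity of $T$ on $\Gamma^d$, together with $L=1$, forces a multiple factorisation of $T$: the stabilisers $M_1,\dots,M_d$ of the $d$ partitions are $G$-conjugate, satisfy $T_\omega=M_1\cap\cdots\cap M_d$, and obey the product rule of a cartesian decomposition. Since $G/(G\cap H^d)$ is a transitive subgroup of $\S_d$ that is also a section of $\Out(T)$, the number $d$ of factors is bounded, and the strong multiple factorisations with $d\geq3$ are extremely restricted; using Praeger's inclusion theorem \cite{a:prim-incl} I would reduce to $d=2$. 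For $d=2$ the feasibility relation $k(k-1)=2(c^2-1)$ of Lemma~\ref{lem:basic} rewrites, with $y=2k-1$, as the Pell-type equation $8c^2-y^2=7$, so Lemma~\ref{lem:dio} shows that $c$ belongs to the sequence $4,11,23,64,134,\dots$; since $c\geq5$ the admissible degrees are $c\in\{11,23,64,134,\dots\}$.

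The remaining task, and the main obstacle, is to eliminate this infinite family of degrees. Here I would combine two constraints. First, the fixed-point estimate of Corollary~\ref{cor:fix-bi-bound}: for $1\neq t\in T$ one has $|\fix_{\Pmc}(t)|=|\fix_\Gamma(t_1)|\cdot|\fix_\Gamma(t_2)|$, and this product is at most $k+\sqrt{k-2}<2k<2\sqrt2\,c+2$, which sharply limits the fixity of $T$ in its degree-$c$ action on $\Gamma$ and pushes its minimal degree close to $c$. Second, the factorisation $T=M_1M_2$ with $M_1\cong M_2$ of index $c$ must appear in the classification of factorisations of almost simple groups. I expect the combination of a large minimal degree (from the fixity bound) with the scarcity of index-$c$ factorisations to exclude every $c$ in the Diophantine sequence. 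Making this elimination uniform across the whole infinite family, rather than handling each simple group and each admissible $c$ separately, is where I anticipate the real difficulty: the interplay between the arithmetic restriction from Lemma~\ref{lem:dio} and the group-theoretic restriction from the factorisation data is what must be engineered to close the argument.
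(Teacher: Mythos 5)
Your structural analysis is essentially sound and matches the theory underlying the paper's proof: for a type-$\AS$ group the simple socle $T$ must lie in $H^d$ as a full diagonal (strip) subgroup, $L=1$, and the decomposition corresponds to a cartesian factorisation of $T$; this is indeed why the counting argument of Theorem~\ref{thm:main-pa} is unavailable here. Your reduction of the feasibility condition $k(k-1)=2(c^2-1)$ to the Pell-type equation $8c^2-y^2=7$ with $y=2k-1$, and the resulting sequence $c\in\{11,23,64,134,\dots\}$, also agrees with the paper. But the proposal stops exactly where the proof still has to be carried out: you never eliminate the surviving groups, and you say yourself that the uniform elimination is ``where I anticipate the real difficulty.'' That is a genuine gap, not a technicality.

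The paper closes this gap with two steps that your sketch lacks. First, rather than leaving the factorisation data at the level of ``must appear in the classification,'' it invokes the explicit classification of cartesian decompositions preserved by almost simple groups \cite[Theorem 8.21]{b:Praeger-18}: necessarily $d=2$ and $(T,v)$ is $(\A_6,36)$, $(\M_{12},144)$, or $(\PSp_4(q),q^4(q^2-1)^2/4)$ with $q\geq4$ even. The first two cases die at once, since $k=\lceil\sqrt{2}\,c\rceil$ gives $k=9$ or $17$, hence $v=37$ or $137$, contradictions. Second --- and this is the idea missing from your proposal --- the infinite family $\PSp_4(q)$ is killed by arithmetic, not by fixity or factorisation scarcity: writing the solutions of $8c^2-7=e^2$ as $c=u_n\pm v_n$ via Lemma~\ref{lem:dio} and reducing $u_n,v_n$ modulo $3$ shows that \emph{every} admissible $c$ satisfies $c\equiv\pm1\pmod{3}$, whereas $c=q^2(q^2-1)/2$ is divisible by $3$ because $3$ divides $q^2-1$ for $q$ a power of $2$. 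Your hoped-for combination of the fixed-point bound of Corollary~\ref{cor:fix-bi-bound} with the scarcity of index-$c$ factorisations would not obviously exclude $\PSp_4(q)$ acting on the cosets of $\O_4^-(q)$; the decisive obstruction really is this congruence against the Diophantine sequence. (A minor further point: your justification that $\sigma(L)=1$ --- that $\Soc(H)^d$ would be a ``non-simple or abelian normal subgroup,'' contradicting type $\AS$ --- is loose as stated, since normal subgroups of $G$ need not be minimal; the correct argument is that $\Soc(H)^d\unlhd G$ would force the simple minimal normal subgroup $T$ to coincide with one simple direct factor of $\Soc(H)^d$, contradicting the transitivity of $G$ on the $d$ coordinates. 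The conclusion itself is correct.)
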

\begin{proof}
Suppose that $\Soc(G)=T$ with $T$ a non-abelian simple group. Since $G$ preserves the cartesian decomposition $\Pmc=\Gamma^d$, it follows from \cite[Theorem 8.21]{b:Praeger-18} that $d=2$ and $(T, v)$ is one of the following:
\begin{enumerate}[\rm (i)]
    \item $T=\A_6$ and $v=36$;
    \item $T=\M_{12}$ and $v=144$;
    \item $T=\PSp_4(q)$ with $q\geq 4$ even, and $v=q^4(q^2-1)^2/4$.
\end{enumerate}
If $T=\A_6$ or $\M_{12}$, then $c$ is $6$ or $12$, respectively, and by Lemma \ref{lem:basic},   $k=\lceil\sqrt{2}\,c\rceil$ is $9$ or $17$, respectively. However, \eqref{eq:basic} then implies that $v$ is $37$ or $136$, respectively, which is a contradiction. Thus $T=\PSp_4(q)$ with $q=2^a\geq 4$ and $v=c^2$, where $c=q^2(q^2-1)/2$. By \eqref{eq:basic},  $k(k-1)=2(c^2-1)$. Solving the quadratic equation $k^2-k-2c^2+2=0$, we find that $k=(1+\sqrt{8c^2-7})/2$. Therefore, $8c^2-7=e^2$, for some positive integer $e$.
We now apply Lemma \ref{lem:dio}, and conclude that $c=u_n\pm v_n$, for some $n\geq 0$, where $u_n$ and $v_n$ are defined as in \eqref{eq:dio-sol-uv}. Since $q\geq 4$, it follows that $c\geq 120$, and so by inspecting the values of $u_n$ and $v_n$ for small values of $n$, we conclude that $n\geq 3$.

Let $a=3$ and $b=\sqrt{8}$. Then
\begin{align*}
u_n = \frac{1}{2}\sum_{i=0}^n\binom{n}{i}a^{n-i}b^{i}(1+(-1)^i) =  \sum_{0\leq i\leq n,\, i\ {\rm even}}\binom{n}{i}a^{n-i}b^{i}
\end{align*}
and
\begin{align*}
v_n = \frac{1}{2b}\sum_{i=0}^n\binom{n}{i}a^{n-i}b^{i}(1-(-1)^i) =  \sum_{1\leq i\leq n,\, i\ {\rm odd}}\binom{n}{i}a^{n-i}b^{i-1}
\end{align*}
and we note that each term in each of these sums is a positive integer. We evaluate $u_n, v_n$ modulo $3$ and find, noting that $a=3$, that

\begin{center}
$u_n\equiv 0\pmod{3}$ if $n$ is odd, and  $u_n\equiv b^n=2^{3n/2}\pmod{3}$ if $n$ is even, while\\
$v_n\equiv b^{n-1}=2^{3(n-1)/2}\pmod{3}$ if $n$ is odd, and  $v_n\equiv 0\pmod{3}$ if $n$ is even.
\end{center}
Now $c=u_n + \epsilon v_n$ where
$\epsilon=\pm 1$. Thus $c\equiv \epsilon 2^{3(n-1)/2}\pmod{3}$ if $n$ is odd, and     $c\equiv  2^{3n/2}\pmod{3}$ if $n$ is even. This implies that, for all $n$ and $\epsilon$, $c\equiv \pm 1\pmod{3}$. However $3$ divides $q^2-1$ for all even $q$, and hence $3$ divides $c$. This contradiction completes the proof.
\end{proof}

\begin{proof}[\bf Proof of Theorem \ref{thm:main}]
By \cite[Theorems 8.4 and 9.19]{b:Praeger-18}\label{lem:inclusion}, if $G$ is a primitive group preserving a homogeneous decomposition, then is one of the types \HA, \AS, \HC, \CD, \TW\ or \PA. In particular, $G$ is neither of type \HS, nor \SD. Moreover, Theorem~\ref{thm:main-pa} and Proposition~\ref{prop:AS} imply that $G$ can only have type $\HA$ or $\TW$.
\end{proof}

\section*{Acknowledgements}

The third author acknowledges support from Australian Research Council Discovery Project Grant DP200100080.
The first and second  authors are also grateful to Cheryl E. Praeger and Alice Devillers  for supporting their visit to The University of Western Australia during July-September 2019.


\end{document}